\documentclass[10pt]{amsart}

\usepackage{amssymb}
\usepackage[leqno]{amsmath}
\usepackage{mathrsfs}
\usepackage{stmaryrd}
\usepackage{chemarrow}
\usepackage[norelsize]{algorithm2e}
\usepackage{enumerate}
\usepackage{graphicx}
\usepackage[pdftex,bookmarksnumbered,bookmarksopen,colorlinks,linkcolor=blue,anchorcolor=black,citecolor=blue,urlcolor=blue]{hyperref}
\usepackage[all]{xy}
\usepackage{tikz}
\usetikzlibrary{arrows}

\usepackage{multirow}
\usepackage[hyperpageref]{backref}
\usepackage{subfigure}
\usepackage{array}

% comments
%\newcommand{\LC}[1]{\textcolor{cyan}{#1}}

%\usepackage[pdftex,colorlinks=true,citecolor=myBlue,linkcolor=myBlue]{hyperref}

  \newcounter{mnote}
  \setcounter{mnote}{0}
  
  \let\oldmarginpar\marginpar
    \renewcommand\marginpar[1]{\-\oldmarginpar[\raggedleft\footnotesize #1]%
    {\raggedright\footnotesize #1}}

\newtheorem{theorem}{Theorem}[section]
\newtheorem{lemma}[theorem]{Lemma}
\newtheorem{corollary}[theorem]{Corollary}

\newtheorem{remark}[theorem]{Remark}

\newcommand{\curl}{\operatorname{curl}}
\renewcommand{\div}{\operatorname{div}}

\newcommand{\skw}{\operatorname{skw}}

\numberwithin{equation}{section}

\begin{document}
\title[Stabilization-Free VEM]{Virtual Element Methods Without Extrinsic Stabilization}
\author{Chunyu Chen}%
\address{Hunan Key Laboratory for Computation and Simulation in Science and Engineering; School of Mathematics and Computational Science, Xiangtan University, Xiangtan 411105, P.R.China }%
 \email{202131510114@smail.xtu.edu.cn}%
 \author{Xuehai Huang}%
 \address{School of Mathematics, Shanghai University of Finance and Economics, Shanghai 200433, China}%
 \email{huang.xuehai@sufe.edu.cn}%
\author{Huayi Wei}%
\address{Hunan Key Laboratory for Computation and Simulation in Science and Engineering; School of Mathematics and Computational Science, Xiangtan University, Xiangtan 411105, P.R.China }%
 \email{weihuayi@xtu.edu.cn}%

 \thanks{The second author is the corresponding author. The second author was supported by the National Natural Science Foundation of
China (NSFC) (Grant No. 12171300), and the Natural Science Foundation of Shanghai
(Grant No. 21ZR1480500).}
 \thanks{The first and third authors were supported by the National Natural
Science Foundation of China (NSFC) (Grant No. 12261131501, 11871413), and the construction of innovative provinces in Hunan Province (Grant No. 2021GK1010).}

\makeatletter
\@namedef{subjclassname@2020}{\textup{2020} Mathematics Subject Classification}
\makeatother
\subjclass[2020]{
%65F10;   %% Iterative numerical methods for linear systems
% 58J10;   %%  Differential complexes [See also 35Nxx]; elliptic complexes
65N12;   %%  Stability and convergence of numerical methods for boundary value problems involving PDEs;
% 65N15;   %%  Error bounds for boundary value problems involving PDEs
65N22;   %%  Numerical solution of discretized equations for boundary value problems involving PDEs;
65N30;   %%  Finite element, Rayleigh-Ritz and Galerkin methods for boundary value problems involving PDEs;
%65N55;   %%  Multigrid methods; domain decomposition for boundary value problems involving PDEs;
% 15A69;   %%  Multilinear algebra, tensor calculus
% 15A72;   %%  Vector and tensor algebra, theory of invariants [See also 13A50, 14L24]
}

\begin{abstract}
Virtual element methods (VEMs) without extrinsic stabilization in arbitrary degree of polynomial are developed for second order elliptic problems, including a nonconforming VEM and a conforming VEM in arbitrary dimension. The key is to construct local $H(\div)$-conforming macro finite element spaces such that the associated $L^2$ projection of the gradient of virtual element functions is computable, and the $L^2$ projector has a uniform lower bound on the gradient of virtual element function spaces in $L^2$ norm. Optimal error estimates are derived for these VEMs. Numerical experiments are provided to test the VEMs without extrinsic stabilization.
\end{abstract}

\keywords{Virtual element, stabilization, macro finite element, norm equivalence, error analysis}

\maketitle

% \tableofcontents

\section{Introduction}
An additional stabilization term is usually required in the virtual element methods (VEMs) to ensure the coercivity of the discrete bilinear form~\cite{BeiraoBrezziCangianiManziniEtAl2013,BeiraoBrezziMariniRusso2014}.
% , while the stabilization term also brings about some problems. 
The local stabilization term $S_K(\cdot, \cdot)$ has to satisfy 
$$
c_{*} |v|_{1,K}^2\leq S_K(v,v)\leq c^{*} |v|_{1,K}^2 
$$
for $v$ belongs to the non-polynomial subspace of the virtual element space, which influences the condition number of the stiffness matrix and brings in the pollution factor $\frac{\max\{1, c^*\}}{\min\{1, c_*\}}$ in the error estimates \cite{DassiMascotto2018,BeiraodaVeigaDassiRusso2017,Mascotto2018}.  
% The stabilization term appears in both sides of the a posteriori error estimates when bounding the error by the residual error estimators \cite{CangianiGeorgoulisPryerSutton2017}.
For the a posteriori error analysis on anisotropic polygonal meshes in \cite{AntoniettiBerroneBorioDAuriaEtAl2022}, the stabilization term dominates the error estimator, which makes the anisotropic a posteriori error estimator suboptimal. 
The stabilization term significantly affects the performance of the VEM for the Poisson eigenvalue problem \cite{BoffiGardiniGastaldi2020},
and improper choices of the stabilization term will produce useless results.
Special stabilization terms are designed for a nonlinear elasto-plastic deformation problem \cite{HudobivnikAldakheelWriggers2019} and an electromagnetic interface problem in three dimensions \cite{CaoChenGuo2023}, which are not easy to be extended to other problems. 
In short, the stabilization term has to be chosen carefully for different partial differential equations to make the VEM work well, which is arduous and will reduce its practicality.

% Recently a stabilization-free linear VEM, based on a higher order polynomial projection of the gradient of virtual element functions, is devised for the Poisson equation in two dimensions in \cite{BerroneBorioMarcon2021}, where the degree of polynomial used in the projection depends on the number of vertices of the polygon and generally the geometry of the polygon. 
% Numerical examples in \cite{BerroneBorioMarcon2022} show that the stabilization-free VEM in \cite{BerroneBorioMarcon2021} outperforms the standard VEM in \cite{BeiraodaVeigaBrezziMariniRusso2016} for anisotropic elliptic problems on general convex polygonal meshes.
% The stabilization-free VEM in \cite{BerroneBorioMarcon2021} is then applied to solve the Laplacian eigenvalue problem \cite{MengWangBuMei2022}. Along this line, similar stabilization-free virtual element methods are devised for a plane elasticity problem in~\cite{DAltriMirandaPatrunoSacco2021,ChenSukumar2022,ChenSukumar2023}. 

A linear VEM without extrinsic stabilization, based on a higher order polynomial projection of the gradient of virtual element functions, is devised for the Poisson equation in two dimensions in \cite{BerroneBorioMarcon2021}, where the degree of polynomial used in the projection depends on the number of vertices of the polygon and generally the geometry of the polygon. 
Numerical examples in \cite{BerroneBorioMarcon2022} show that the VEM in \cite{BerroneBorioMarcon2021} outperforms the standard one in \cite{BeiraodaVeigaBrezziMariniRusso2016} for anisotropic elliptic problems on general convex polygonal meshes.
% The stabilization-free VEM in \cite{BerroneBorioMarcon2021} is then applied to solve the Laplacian eigenvalue problem \cite{MengWangBuMei2022}. 
% Along this line, similar stabilization-free virtual element methods are devised for a plane elasticity problem in~\cite{DAltriMirandaPatrunoSacco2021,ChenSukumar2022,ChenSukumar2023}. 
The idea in \cite{BerroneBorioMarcon2021} is difficult to be extended to construct VEMs without extrinsic stabilization in higher dimensions, and
the analysis is rather elaborate. 
This motivates us to construct VEMs without extrinsic stabilization in arbitrary dimension and arbitrary degree of polynomial in a unified way.

% The idea in \cite{BerroneBorioMarcon2021} is difficult to be extended to construct stabilization-free virtual element methods in higher dimensions, and
% the analysis is rather elaborate. 
% This motivates us to construct a nonconforming VEM in arbitrary dimension and a conforming VEM in two dimensions in arbitrary degree of polynomial  in a unified way.
% We intend to construct VEMs without extrinsic stabilization in arbitrary degree of polynomial, including a nonconforming VEM and a conforming VEM in arbitrary dimension,  in a unified way.

The key to construct VEMs without extrinsic stabilization is to find a finite-dimensional space $\mathbb{V}(K)$ for polytope $K$ and a projector $Q_K$ onto the space $\mathbb{V}(K)$ such that
\begin{enumerate}[(C1)]
\item It holds the norm equivalence 
\begin{equation}\label{intro:gradVknormequiv} 
\|Q_{K}\nabla v\|_{0,K}\eqsim \|\nabla v\|_{0,K} \quad \forall~v\in V_k(K)
\end{equation}
on shape function space $V_k(K)$ of virtual elements;
\item The projection $Q_{K}\nabla v$ is computable based on the degrees of freedom (DoFs) of virtual elements for $v\in V_k(K)$.
\end{enumerate}
The hidden constants in \eqref{intro:gradVknormequiv} are independent of the size of $K$, but depend on the degree of polynomials, and the chunkiness parameter and the geometric dimension of $K$; see Section~\ref{sec:meshcondition} for details.
We can choose $Q_{K}$ as the $L^2$-orthogonal projector with respect to the
inner product $(\cdot, \cdot)_K$. The norm equivalence
\eqref{intro:gradVknormequiv} implies that the space $\mathbb{V}(K)$ should be
sufficiently large compared with the virtual element space $V_k(K)$.  In
standard virtual element methods, $Q_{k-1}^{K}\nabla v$
\cite{BeiraodaVeigaBrezziMariniRusso2016} or $\nabla\Pi_k^{K}v$
\cite{BeiraoBrezziCangianiManziniEtAl2013,BeiraoBrezziMariniRusso2014,AhmadAlsaediBrezziMariniEtAl2013,AyusodeDiosLipnikovManzini2016}
are used, where $Q_{k-1}^{K}$ is the $L^2$-orthogonal projector onto the
$(k-1)$-th order polynomial space $\mathbb P_{k-1}(K; \mathbb{R}^d)$, and
$\Pi_k^{K}$ is the $H^1$ projection operator onto the $k$-th order polynomial
space $\mathbb P_{k}(K)$.  While only
\[
\|Q_{k-1}^{K}\nabla v\|_{0,K}\lesssim \|\nabla v\|_{0,K}, \quad \|\nabla\Pi_k^{K}v\|_{0,K}\lesssim \|\nabla v\|_{0,K}
\]
hold
rather than the norm equivalence \eqref{intro:gradVknormequiv}, then an additional stabilization term is usually required to ensure the coercivity of the discrete bilinear form.
To remove the additional stabilization term, based on a regular simplicial tessellation of polytope $K$, we employ $k$-th order or $(k-1)$-th order $H(\div)$-conforming macro finite elements as $\mathbb{V}(K)$ in this paper, and keep the virtual element space $V_k(K)$ as the usual ones.
% $\mathbb{V}(K)$ is taken as $\mathbb P_{l}(K; \mathbb{R}^d)$ with large enough $l$ even for the lowest order case $k=1$ in \cite{BerroneBorioMarcon2021,BerroneBorioMarcon2022,DAltriMirandaPatrunoSacco2021}, and the virtual element space $V_k(K)$ has to be modified accordingly to make $Q_{l}^{K}\nabla v$ computable.
% Instead, based on a regular simplicial tessellation of polytope $K$, we employ $k$-th order or $(k-1)$-th order $H(\div)$-conforming macro finite elements as $\mathbb{V}(K)$ in this paper, and keep the virtual element space $V_k(K)$ as the usual ones.

We first construct $H(\div)$-conforming macro finite elements based on a
simplicial partition $\mathcal T_K$ of polytope $K$ in arbitrary dimension. The
shape function space $\mathbb{V}_{k}^{\rm div}(K)$ is a subspace of the $k$-th
order Brezzi-Douglas-Marini (BDM) element space on the simplicial partition
$\mathcal T_K$ for $k\geq1$ and the lowest order Raviart-Thomas (RT) element
space for $k=0$, with some constraints. To ensure the $L^2$ projection
$Q_{K,k}^{\div}\nabla v$ onto the space $\mathbb{V}_{k}^{\rm div}(K)$ is computable
for virtual element function $v\in V_k(K)$, we require that
$\div\boldsymbol{\phi}\in\mathbb P_{\max\{k-1,0\}}(K)$ and
$\boldsymbol{\phi}\cdot\boldsymbol{n}$ on each $(d-1)$-dimensional face of $K$
is a polynomial for $\boldsymbol{\phi}\in\mathbb{V}_{k}^{\rm div}(K)$.  Based on
these considerations and the direct decomposition of an $H(\div)$-conforming
macro finite element space related to $\mathbb{V}_{k}^{\rm div}(K)$, we propose
the unisolvent DoFs for the space $\mathbb{V}_{k}^{\rm div}(K)$, and establish the
$L^2$ norm equivalence.  By the way, we use the matrix-vector language to review
a conforming finite element for the differential $(d-2)$-form in
\cite{ArnoldFalkWinther2006,Arnold2018}.

% \[
% \|Q_{K,k-1}^{\div}\nabla v\|_{0,K}\eqsim \|\nabla v\|_{0,K} \quad \forall~v\in V_k(K).
% \]

% $\mathbb{V}_{k-1}^{\rm div}(K)$

By the aid of the projector $Q_{K,k}^{\div}$, we advance a nonconforming VEM and a conforming VEM without extrinsic stabilization for second order elliptic problems in arbitrary dimension.
Indeed, these VEMs can be equivalently recast as
primal mixed VEMs.
We prove the norm equivalence~\eqref{intro:gradVknormequiv} and the well-posedness of the VEMs without extrinsic stabilization, and derive the optimal error estimates. 

Numerical experiments are provided to test the convergence rate, the invertibility of the local stiffness matrices, the assembling time and the condition number of stiffness
matrix for the VEMs without extrinsic stabilization, which appear competitive with respect to other existing VEMs.

The idea on constructing VEMs without extrinsic stabilization in this paper is simple,
and can be extended to more VEMs and more partial differential equations.
Since there is no additional stabilization term, these VEMs will be preferred in the engineering community. More benefits of the VEMs without extrinsic stabilization will be the study of future works.
By the way, we refer to \cite{XuZhang2023} for a VEM without extrinsic stabilization on triangular meshes, \cite{CicuttinErnLemaire2019} for a hybrid high-order method, and 
\cite{YeZhang2020,AlTaweelWang2020,AlTaweelWang2020a,YeZhang2021,YeZhang2021a,AlTaweelWangYeZhang2021} for weak Galerkin finite element methods without extrinsic stabilization.

The rest of this paper is organized as follows. Notation and mesh conditions are presented in Section~\ref{sec:prelim}. 
In Section~\ref{sec:divmacrofem}, $H(\div)$-conforming macro finite elements in arbitrary dimension are constructed. A nonconforming VEM without extrinsic stabilization in arbitrary dimension is developed in Section \ref{sec:stabfreencfmvem}, and a conforming VEM without extrinsic stabilization in arbitrary dimension is devised in Section \ref{sec:stabfreecfmvem}.
Some numerical results are shown in Section \ref{sec:numericalexamps}.

\section{Preliminaries}\label{sec:prelim}

\subsection{Notation}
Let $\Omega\subset \mathbb{R}^d$ be a bounded
polytope. Given a bounded domain $K\subset\mathbb{R}^{d}$ and a
non-negative integer $m$, let $H^m(K)$ be the usual Sobolev space of functions
on $K$.
The corresponding norm and semi-norm are denoted respectively by
$\Vert\cdot\Vert_{m,K}$ and $|\cdot|_{m,K}$. By convention, let $L^2(K)=H^0(K)$. 
Let $(\cdot, \cdot)_K$ be the standard inner product on $L^2(K)$. If $K$ is $\Omega$, we abbreviate
$\Vert\cdot\Vert_{m,K}$, $|\cdot|_{m,K}$ and $(\cdot, \cdot)_K$ by $\Vert\cdot\Vert_{m}$, $|\cdot|_{m}$ and $(\cdot, \cdot)$,
respectively. Let $H_0^m(K)$ be the closure of $\mathcal C_{0}^{\infty}(K)$ with
respect to the norm $\Vert\cdot\Vert_{m,K}$, and $L_0^2(K)$ consist of all functions in $L^2(K)$ with zero mean value.
For integer $k\geq0$,
notation $\mathbb P_k(K)$ stands for the set of all
polynomials over $K$ with the total degree no more than $k$. Set $\mathbb P_{-1}(K)=\{0\}$.
For a Banach space $B(K)$, let $B(K; \mathbb{X}):=B(K)\otimes\mathbb{X}$ with $\mathbb{X}=\mathbb{R}^d$ and $\mathbb{K}$ being the set of antisymmetric matrices.
Denote by $Q_k^{K}$ the $L^2$-orthogonal projector onto $\mathbb P_k(K)$ or $\mathbb P_{k}(K; \mathbb{X})$.
Let $\skw\boldsymbol{\tau}:=(\boldsymbol{\tau}-\boldsymbol{\tau}^{\intercal})/2$ be the antisymmetric part of a tensor $\boldsymbol{\tau}$.
Denote by $\#S$ the number of elements in a finite set $S$.

Given a $d$-dimensional polytope $K$, let $\Delta_j(K)$ be the set of all $j$-dimensional faces of $K$ for $j=0,1,\ldots, d-1$. Set $\mathcal{F}(K):=\Delta_{d-1}(K)$ and $\mathcal{E}(K):=\Delta_{d-2}(K)$.
% Given a $d$-dimensional polytope $K$, let $\mathcal{F}(K)$ and $\mathcal{E}(K)$ be the set of all $(d-1)$-dimensional faces and $(d-2)$-dimensional faces of $K$ respectively. 
For $F\in\mathcal{F}(K)$, denote by $\boldsymbol{n}_{K,F}$ the unit outward normal vector to $\partial K$, which will be abbreviated as $\boldsymbol{n}_F$ or $\boldsymbol{n}$ if not causing any confusion.

Given a $d$-dimensional simplex $T$,
let $F_i\in\mathcal F(T)$ be the $(d-1)$-dimensional face opposite to vertex $\texttt{v}_i$, $\boldsymbol n_i$ be the unit outward normal to the face $F_i$, and $\lambda_i$ be the barycentric coordinate of the point $\boldsymbol x$ corresponding to the vertex $\texttt{v}_i$, for $i=0, 1, \cdots, d$.
Clearly $\{ \boldsymbol n_1, \boldsymbol n_2, \cdots, \boldsymbol n_d \}$ spans $\mathbb R^d$, and $\{\skw({\boldsymbol n_i\boldsymbol n_j^{\intercal}})\}_{1\leq i<j\leq d}$ spans the antisymmetric space $\mathbb K$. 
For $F\in\mathcal F(T)$, let $\mathcal{E}(F):=\{e\in\mathcal{E}(T): e\subset\partial F\}$.
For $e\in \mathcal E(F)$, denote by $\boldsymbol{n}_{F,e}$ be the unit vector outward normal to $\partial F$ but parallel to $F$.

Let $\{\mathcal {T}_h\}$ denote a family of partitions
of $\Omega$ into nonoverlapping simple polytopes with $h:=\max\limits_{K\in \mathcal {T}_h}h_K$
and $h_K:=\mbox{diam}(K)$.
Denote by $\mathcal F_h^r$ the set of all $(d-r)$-dimensional faces of the partition $\mathcal T_h$ for $r=1,\ldots,d$. Set $\mathcal F_h:=\mathcal F_h^1$ for simplicity. Let $\mathcal F_h^{\partial}$ be the subset of $\mathcal F_h$ including all $(d-1)$-dimensional faces on $\partial\Omega$. 
For any $F\in\mathcal{F}_h$,
let $h_F$ be its diameter and fix a unit normal vector $\boldsymbol{n}_F$.
For a piecewise smooth function $v$, define 
\[
\|v\|_{1,h}^2:=\sum_{K\in\mathcal T_h}\|v\|_{1,K}^2,\quad |v|_{1,h}^2:=\sum_{K\in\mathcal T_h}|v|_{1,K}^2.
\]

For domain $K$, we use $\boldsymbol{H}(\div, K)$ and $\boldsymbol{H}_0(\div, K)$ to denote the standard divergence vector spaces. 
For a smooth vector function $\boldsymbol{v}$, let $\nabla\boldsymbol{v}:=(\partial_iv_j)_{1\leq i,j\leq d}$.
On the face $F\in\mathcal F_h$, define the surface divergence
\[
\div_F\boldsymbol{v}=\div(\boldsymbol{v}-(\boldsymbol{v}\cdot\boldsymbol{n})\boldsymbol{n})=\div\boldsymbol{v}-\partial_n(\boldsymbol{v}\cdot\boldsymbol{n}).
\]
Define the surface gradient $\nabla_Fv:=\nabla v-(\partial_nv)\boldsymbol{n}$ for a smooth function $v$.

\subsection{Mesh conditions}\label{sec:meshcondition}
We impose the following conditions on the mesh $\mathcal T_h$ in this paper:
\begin{itemize}
 \item[(A1)] Each element $K\in \mathcal T_h$ and each face $F\in \mathcal F_h^r$ for $1\leq r\leq d-1$ is star-shaped with a uniformly bounded chunkiness parameter.

 \item[(A2)] There exists a shape-regular simplicial mesh $\mathcal T_h^*$ such that each $K\in \mathcal T_h$ is a union of some simplexes in $\mathcal T_h^*$. For polytope $K\in \mathcal T_h$, let $\mathcal T_K$ be the simplicial partition of $K$ induced from $\mathcal T_h^*$. Assume simplicial partition $\mathcal T_K$ is quasi-uniform.
\end{itemize}

For $K\in \mathcal T_h$, let $\boldsymbol{x}_K$ be the center of  the largest ball contained in $K$.
Throughout this paper, we use
``$\lesssim\cdots $" to mean that ``$\leq C\cdots$", where
$C$ is a generic positive constant independent of mesh size $h$, but may depend on the chunkiness parameter of the polytope, the degree of polynomials $k$, the dimension of space $d$, and the shape regularity and quasi-uniform constants of the virtual triangulation $\mathcal T^*_h$,
which may take different values at different appearances. Let $A\eqsim B$ mean $A\lesssim B$ and $B\lesssim A$.

% For polytope $K\in \mathcal T_h$, denote by $\mathcal T_K$ the simplicial partition of $K$, which is induced from $\mathcal T_h^*$. 
Let $\mathcal{F}(\mathcal T_K)$ and $\mathcal{E}(\mathcal T_K)$ be the set of all $(d-1)$-dimensional faces and $(d-2)$-dimensional faces of the simplicial partition $\mathcal T_K$ respectively. Set 
\[
\mathcal{F}^{\partial}(\mathcal T_K):=\{F\in\mathcal{F}(\mathcal T_K): F\subset\partial K\},\quad \mathcal{E}^{\partial}(\mathcal T_K):=\{e\in\mathcal{E}(\mathcal T_K): e\subset\partial K\}.
\]

Hereafter we use $T$ to represent a simplex, and $K$ to denote a general polytope.

\section{$H(\div)$-Conforming Macro Finite Elements}\label{sec:divmacrofem}
In this section we will construct an $H(\div)$-conforming macro finite element space $\mathbb{V}_{k-1}^{\rm div}(K)$ and the corresponding degrees of freedoms in arbitrary dimension, and establish the $L^2$ norm equivalence for $\boldsymbol{\phi}\in\mathbb{V}_{k-1}^{\rm div}(K)$
\begin{equation*}%\label{eq:Vkm1divnormequiv}
\|\boldsymbol{\phi}\|_{0,K}\eqsim h_K\|\div\boldsymbol{\phi}\|_{0,K} + \sup_{\boldsymbol{\psi}\in\div\mathring{\boldsymbol{V}}_{k}^{d-2}(K)}\frac{(\boldsymbol{\phi}, \boldsymbol{\psi})_K}{\|\boldsymbol{\psi}\|_{0,K}} +\sum_{F\in\mathcal F(K)}h_F^{1/2}\|\boldsymbol{\phi}\cdot\boldsymbol{n}\|_{0,F}.
\end{equation*}
The space $\mathbb{V}_{k-1}^{\rm div}(K)$ and its $L^2$ norm equivalence will be used to prove
the norm equivalence for the virtual element space
\begin{equation*}%\label{intro:gradVknormequiv} 
\|Q_{K,k-1}^{\div}\nabla v\|_{0,K}\eqsim \|\nabla v\|_{0,K} \quad \forall~v\in V_k(K),
\end{equation*}
where $V_k(K)$ is the nonconforming virtual element space in Section \ref{sec:stabfreencfmvem}, and the conforming virtual element space in Section \ref{sec:stabfreecfmvem}. Here $Q_{K,k-1}^{\div}$ is the computable $L^2$ projector onto the space $\mathbb{V}_{k-1}^{\rm div}(K)$.

\subsection{$H(\div)$-conforming finite elements}
For a $d$-dimensional polytope $K\in \mathcal T_h$ and $k\geq2$, let 
\[
\boldsymbol{V}_{k-1}^{\mathrm{BDM}}(K):=\{\boldsymbol{\phi}\in\boldsymbol{H}(\div, K): \boldsymbol{\phi}|_{T}\in \mathbb P_{k-1}(T;\mathbb R^d) \textrm{ for each } T\in\mathcal T_K\}
\]
be the local Brezzi-Douglas-Marini (BDM) element space~\cite{BrezziDouglasMarini1986,BrezziDouglasDuranFortin1987,Nedelec:1986family}, whose degrees of freedom (DoFs) are given by \cite{ChenHuang2021divdiv}
\begin{align}
(\boldsymbol{v}\cdot\boldsymbol{n}, q)_F, & \quad q\in \mathbb P_{k-1}(F), F\in\mathcal F(T), \label{BDMdof1} \\
(\div\boldsymbol{v}, q)_T, & \quad q\in \mathbb P_{k-2}(T)/\mathbb R, \label{BDMdof2} \\
(\boldsymbol{v}, \boldsymbol{q})_T, & \quad \boldsymbol{q}\in \mathbb P_{k-3}(T;\mathbb K)\boldsymbol{x} \label{BDMdof3}
\end{align}
for $T\in\mathcal T_K$. Here $\mathbb P_{k-2}(T)/\mathbb R:=\mathbb P_{k-2}(T)\cap L_0^2(T)$, and $\mathbb P_{k-3}(T;\mathbb K)\boldsymbol{x}:=\{\boldsymbol{\tau}\boldsymbol{x}: \boldsymbol{\tau}\in \mathbb P_{k-3}(T;\mathbb K)\}$ with $\boldsymbol{x}\in T$ being the independent variable.
Define $\mathring{\boldsymbol{V}}_{k-1}^{\mathrm{BDM}}(K):=\boldsymbol{V}_{k-1}^{\mathrm{BDM}}(K)\cap \boldsymbol{H}_0(\div, K)$.

We also need the lowest order Raviart-Thomas (RT) element space~\cite{RaviartThomas1977,Nedelec1980}
\[
\boldsymbol{V}^{\mathrm{RT}}(K):=\{\boldsymbol{\phi}\in\boldsymbol{H}(\div, K): \boldsymbol{\phi}|_{T}\in \mathbb P_{0}(T;\mathbb R^d)+\boldsymbol{x}\mathbb P_{0}(T) \textrm{ for each } T\in\mathcal T_K\}.
\]
The DoFs are given by
\[
(\boldsymbol{v}\cdot\boldsymbol{n}, q)_F,  \quad q\in \mathbb P_{0}(F), F\in\mathcal F(T)
\]
for $T\in\mathcal T_K$.
Define $\mathring{\boldsymbol{V}}^{\mathrm{RT}}(K):=\boldsymbol{V}^{\mathrm{RT}}(K)\cap \boldsymbol{H}_0(\div, K)$.

\subsection{Finite element for differential $(d-2)$-form}
Now recall the finite element for differential $(d-2)$-form, i.e. $H\Lambda^{d-2}$-conforming finite element in \cite{ArnoldFalkWinther2006,Arnold2018}. We will present the finite element for differential $(d-2)$-form using the proxy of the differential form rather than the differential form itself as in \cite{ArnoldFalkWinther2006,Arnold2018}. 

By (3.5) in \cite{ChenHuang2021divdiv}, we have the direct decomposition
\begin{equation}\label{eq:Pkvectordecomp}  
\mathbb P_{k-1}(T;\mathbb R^{d})=\nabla\mathbb P_k(T)\oplus\mathbb P_{k-2}(T;\mathbb K)\boldsymbol{x}.
\end{equation}
% {where $\mathbb P_{k-2}(T;\mathbb K)\boldsymbol{x}:=\{\boldsymbol{\tau}\boldsymbol{x}: \boldsymbol{\tau}\in \mathbb P_{k-2}(T;\mathbb K)\}$ with $\boldsymbol{x}\in T$ being the independent variable.}
Recall that \cite[(35)]{Chen;Huang:2020Finite}
\begin{equation}\label{eq:20230205}
\mathbb P_{k}(T)\cap\ker(I+\boldsymbol{x}\cdot\nabla)=\{0\}.
\end{equation}
% Indeed, \eqref{eq:20230205} follows from the Euler's formula \cite[(2.1)]{ChenHuang2021divdiv}
% $$
% \boldsymbol{x}\cdot\nabla q=kq\quad\forall~q\in \mathbb H_k(T),
% $$
% where $\mathbb H_k(T)$ is the space of homogeneous polynomials of degree $k$.

\begin{lemma}\label{lem:skwgrad}
For $\boldsymbol{w}\in\mathbb P_{k-2}(T;\mathbb K)\boldsymbol{x}$ satisfying $(\skw\nabla\boldsymbol{w})\boldsymbol{x}=\boldsymbol{0}$, it holds $\boldsymbol{w}=\boldsymbol{0}$.
\end{lemma}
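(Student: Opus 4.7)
The plan is to reduce the hypothesis to the setting of \eqref{eq:20230205} applied componentwise. Write $\boldsymbol{w} = \boldsymbol{\tau}\boldsymbol{x}$ with $\boldsymbol{\tau}\in\mathbb{P}_{k-2}(T;\mathbb{K})$. The first observation is the trivial but crucial one: since $\boldsymbol{\tau}$ is antisymmetric, $\boldsymbol{w}\cdot\boldsymbol{x} = \boldsymbol{x}^{\intercal}\boldsymbol{\tau}\boldsymbol{x} = 0$ identically. This is what will let the argument collapse at the end.

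Next I would derive the pointwise vector identity
$$
2(\skw\nabla\boldsymbol{w})\,\boldsymbol{x} \;=\; \nabla(\boldsymbol{w}\cdot\boldsymbol{x}) - (I + \boldsymbol{x}\cdot\nabla)\boldsymbol{w},
$$
valid for any smooth $\boldsymbol{w}$. It comes directly from the definition $(\skw\nabla\boldsymbol{w})_{ij} = \tfrac{1}{2}(\partial_i w_j - \partial_j w_i)$ together with the product-rule relations $(\partial_i w_j)x_j = \partial_i(\boldsymbol{w}\cdot\boldsymbol{x}) - w_i$ and $(\partial_j w_i)x_j = (\boldsymbol{x}\cdot\nabla)w_i$. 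Feeding in the hypothesis $(\skw\nabla\boldsymbol{w})\boldsymbol{x} = \boldsymbol{0}$ and the identity $\boldsymbol{w}\cdot\boldsymbol{x}=0$ from the first step, the right-hand side forces $(I + \boldsymbol{x}\cdot\nabla)\boldsymbol{w} = \boldsymbol{0}$.

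To finish, each component $w_i$ belongs to $\mathbb{P}_{k-1}(T)$ and is annihilated by $I + \boldsymbol{x}\cdot\nabla$, so by \eqref{eq:20230205} applied componentwise we conclude $w_i=0$ for every $i$, i.e.\ $\boldsymbol{w}=\boldsymbol{0}$. The only real subtlety is spotting the algebraic identity relating $(\skw\nabla\boldsymbol{w})\boldsymbol{x}$ to the Euler-type operator $I + \boldsymbol{x}\cdot\nabla$; once it is in hand, the antisymmetry of $\boldsymbol{\tau}$ and the cited injectivity of $I + \boldsymbol{x}\cdot\nabla$ on polynomials carry the proof through with no further calculation.
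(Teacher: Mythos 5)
Your proposal is correct and follows essentially the same route as the paper: the identity $2(\skw\nabla\boldsymbol{w})\boldsymbol{x}=\nabla(\boldsymbol{w}\cdot\boldsymbol{x})-(I+\boldsymbol{x}\cdot\nabla)\boldsymbol{w}$, the observation $\boldsymbol{w}\cdot\boldsymbol{x}=\boldsymbol{x}^{\intercal}\boldsymbol{\tau}\boldsymbol{x}=0$ from antisymmetry, and the injectivity of $I+\boldsymbol{x}\cdot\nabla$ from \eqref{eq:20230205} (applied componentwise, which is exactly how the paper uses it). Your write-up merely spells out the componentwise verification of the identity, which the paper states in matrix form.
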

\begin{proof}
Since
\begin{equation*}%\label{eq:20220224}  
(\skw\nabla\boldsymbol{w})\boldsymbol{x}=\frac{1}{2}(\nabla\boldsymbol{w})\boldsymbol{x}-\frac{1}{2}(\nabla\boldsymbol{w})^{\intercal}\boldsymbol{x}=\frac{1}{2}\nabla(\boldsymbol{w}\cdot\boldsymbol{x})-\frac{1}{2}(I+\boldsymbol{x}\cdot\nabla)\boldsymbol{w},
\end{equation*}
we acquire from $\boldsymbol{w}\cdot\boldsymbol{x}=0$ that $(I+\boldsymbol{x}\cdot\nabla)\boldsymbol{w}=\boldsymbol{0}$, which together with \eqref{eq:20230205} implies $\boldsymbol{w}=\boldsymbol{0}$.
\end{proof}

\begin{lemma}
The polynomial complex
\begin{equation}\label{eq:polycomplex1}
\mathbb R\xrightarrow{}\mathbb P_k(T)\xrightarrow{\nabla}\mathbb P_{k-1}(T;\mathbb R^d)\xrightarrow{\skw\nabla}\mathbb P_{k-2}(T;\mathbb K)
\end{equation}
is exact.
\end{lemma}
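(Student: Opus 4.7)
The plan is to verify exactness at each interior position of the short complex
$$\mathbb R \xrightarrow{} \mathbb P_k(T)\xrightarrow{\nabla}\mathbb P_{k-1}(T;\mathbb R^d)\xrightarrow{\skw\nabla}\mathbb P_{k-2}(T;\mathbb K).$$
Exactness at $\mathbb P_k(T)$ is immediate: a polynomial $p\in \mathbb P_k(T)$ satisfies $\nabla p=\boldsymbol{0}$ if and only if $p$ is a constant, so $\ker(\nabla)=\mathbb R$. The inclusion $\nabla\mathbb P_k(T)\subseteq \ker(\skw\nabla)$ is a direct consequence of the symmetry of the Hessian: for $p\in\mathbb P_k(T)$, $\nabla(\nabla p)=\nabla^2 p$ is a symmetric matrix of polynomials, so $\skw\nabla(\nabla p)=\boldsymbol{0}$. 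The only substantive step is therefore the reverse inclusion $\ker(\skw\nabla)\subseteq \nabla\mathbb P_k(T)$ at the middle position.

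For that step, I would take an arbitrary $\boldsymbol{v}\in\mathbb P_{k-1}(T;\mathbb R^d)$ with $\skw\nabla\boldsymbol{v}=\boldsymbol{0}$ and apply the direct decomposition \eqref{eq:Pkvectordecomp}, which provides a unique splitting $\boldsymbol{v}=\nabla p+\boldsymbol{w}$ with $p\in\mathbb P_k(T)$ and $\boldsymbol{w}\in\mathbb P_{k-2}(T;\mathbb K)\boldsymbol{x}$. Since the Hessian part is killed by $\skw\nabla$, the hypothesis on $\boldsymbol{v}$ reduces to $\skw\nabla\boldsymbol{w}=\boldsymbol{0}$, hence a fortiori $(\skw\nabla\boldsymbol{w})\boldsymbol{x}=\boldsymbol{0}$. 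Lemma~\ref{lem:skwgrad} then forces $\boldsymbol{w}=\boldsymbol{0}$, leaving $\boldsymbol{v}=\nabla p\in\nabla\mathbb P_k(T)$, which is exactly what we need.

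I do not expect any serious obstacle: both ingredients, namely the direct decomposition \eqref{eq:Pkvectordecomp} and the vanishing Lemma~\ref{lem:skwgrad} built on \eqref{eq:20230205}, have been placed immediately before the statement precisely in order to make this argument a two-line reduction. The only bookkeeping to watch is to make sure the decomposition is applied in the correct ``direction'' (reading off $\boldsymbol{w}$ in the complement of $\nabla\mathbb P_k(T)$) so that Lemma~\ref{lem:skwgrad} is indeed applicable to the remainder term.
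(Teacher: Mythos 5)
Your proposal is correct and follows essentially the same route as the paper: decompose $\boldsymbol{v}=\nabla q+\boldsymbol{w}$ via \eqref{eq:Pkvectordecomp}, deduce $\skw\nabla\boldsymbol{w}=\boldsymbol{0}$, and invoke Lemma~\ref{lem:skwgrad} to conclude $\boldsymbol{w}=\boldsymbol{0}$. The extra verifications you include (exactness at $\mathbb P_k(T)$ and symmetry of the Hessian) are the parts the paper dismisses as ``clearly a complex,'' so there is no substantive difference.
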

\begin{proof}
Clearly \eqref{eq:polycomplex1} is a complex. It suffices to prove $\mathbb P_{k-1}(T;\mathbb R^d)\cap\ker(\skw\nabla)\subseteq\nabla\mathbb P_{k}(T)$.

For $\boldsymbol{v}\in\mathbb P_{k-1}(T;\mathbb R^d)\cap\ker(\skw\nabla)$, by decomposition \eqref{eq:Pkvectordecomp}, there exist $q\in \mathbb P_k(T)$ and $\boldsymbol{w}\in\mathbb P_{k-2}(T;\mathbb K)\boldsymbol{x}$ such that $\boldsymbol{v}=\nabla q+\boldsymbol{w}$. 
By $\skw\nabla\boldsymbol{v}=\boldsymbol{0}$, we get $\skw\nabla\boldsymbol{w}=\boldsymbol{0}$. 
Apply Lemma~\ref{lem:skwgrad} to derive $\boldsymbol{w}=\boldsymbol{0}$. Thus, $\boldsymbol{v}=\nabla q\in\nabla\mathbb P_k(T)$.
\end{proof}

\begin{lemma}
It holds the decomposition
\begin{equation}\label{eq:Pkskwtensordecomp}  
\mathbb P_{k-2}(T;\mathbb K)=\skw\nabla\mathbb P_{k-1}(T;\mathbb R^d)\oplus\big(\mathbb P_{k-2}(T;\mathbb K)\cap\ker(\boldsymbol{x})\big),
\end{equation}
where $\mathbb P_{k-2}(T;\mathbb K)\cap\ker(\boldsymbol{x}):=\{\boldsymbol{\tau}\in\mathbb P_{k-2}(T;\mathbb K): \boldsymbol{\tau}\boldsymbol{x}=\boldsymbol{0}\}$.
\end{lemma}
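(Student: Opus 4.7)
The plan is to establish the decomposition in two steps: first show that the intersection of the two subspaces is trivial, then match dimensions.

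For the trivial intersection, I would take any $\boldsymbol{\sigma}\in\skw\nabla\mathbb P_{k-1}(T;\mathbb R^d)\cap\ker(\boldsymbol{x})$ and write $\boldsymbol{\sigma}=\skw\nabla\boldsymbol{v}$ for some $\boldsymbol{v}\in\mathbb P_{k-1}(T;\mathbb R^d)$. Invoking the direct decomposition \eqref{eq:Pkvectordecomp}, decompose $\boldsymbol{v}=\nabla q+\boldsymbol{w}$ with $q\in\mathbb P_k(T)$ and $\boldsymbol{w}\in\mathbb P_{k-2}(T;\mathbb K)\boldsymbol{x}$. Since $\nabla^2 q$ is symmetric, $\skw\nabla(\nabla q)=\boldsymbol{0}$, hence $\boldsymbol{\sigma}=\skw\nabla\boldsymbol{w}$. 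The assumption $\boldsymbol{\sigma}\boldsymbol{x}=\boldsymbol{0}$ then gives $(\skw\nabla\boldsymbol{w})\boldsymbol{x}=\boldsymbol{0}$, and Lemma~\ref{lem:skwgrad} forces $\boldsymbol{w}=\boldsymbol{0}$, so $\boldsymbol{\sigma}=\boldsymbol{0}$.

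For the dimension count, I would use two short-exact-sequence arguments. On one hand, the multiplication map $\boldsymbol{\tau}\mapsto\boldsymbol{\tau}\boldsymbol{x}$ sends $\mathbb P_{k-2}(T;\mathbb K)$ onto $\mathbb P_{k-2}(T;\mathbb K)\boldsymbol{x}$ by definition, with kernel exactly $\mathbb P_{k-2}(T;\mathbb K)\cap\ker(\boldsymbol{x})$, so
\[
\dim\mathbb P_{k-2}(T;\mathbb K)=\dim\mathbb P_{k-2}(T;\mathbb K)\boldsymbol{x}+\dim\bigl(\mathbb P_{k-2}(T;\mathbb K)\cap\ker(\boldsymbol{x})\bigr).
\]
On the other hand, using \eqref{eq:Pkvectordecomp} again together with $\skw\nabla\circ\nabla=0$, the map $\skw\nabla:\mathbb P_{k-2}(T;\mathbb K)\boldsymbol{x}\to\skw\nabla\mathbb P_{k-1}(T;\mathbb R^d)$ is surjective. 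Its injectivity follows from Lemma~\ref{lem:skwgrad} (if $\skw\nabla\boldsymbol{w}=\boldsymbol{0}$ for $\boldsymbol{w}\in\mathbb P_{k-2}(T;\mathbb K)\boldsymbol{x}$, then a fortiori $(\skw\nabla\boldsymbol{w})\boldsymbol{x}=\boldsymbol{0}$, giving $\boldsymbol{w}=\boldsymbol{0}$), so $\dim\skw\nabla\mathbb P_{k-1}(T;\mathbb R^d)=\dim\mathbb P_{k-2}(T;\mathbb K)\boldsymbol{x}$.

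Combining these two dimension identities yields
\[
\dim\mathbb P_{k-2}(T;\mathbb K)=\dim\skw\nabla\mathbb P_{k-1}(T;\mathbb R^d)+\dim\bigl(\mathbb P_{k-2}(T;\mathbb K)\cap\ker(\boldsymbol{x})\bigr),
\]
which together with the trivial intersection proved in the first step gives the desired direct sum. I do not anticipate a genuine obstacle here: the only substantive input beyond linear algebra is Lemma~\ref{lem:skwgrad}, which is used once to kill the intersection and once to give injectivity of $\skw\nabla$ on $\mathbb P_{k-2}(T;\mathbb K)\boldsymbol{x}$; everything else is a rank-nullity computation leveraging the decomposition \eqref{eq:Pkvectordecomp}.
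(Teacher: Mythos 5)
Your proof is correct and takes essentially the same approach as the paper: trivial intersection via the decomposition \eqref{eq:Pkvectordecomp} and Lemma~\ref{lem:skwgrad}, followed by the dimension count establishing $\dim\skw\nabla\mathbb P_{k-1}(T;\mathbb R^d)=\dim\mathbb P_{k-2}(T;\mathbb K)\boldsymbol{x}$. The only cosmetic difference is that you obtain this dimension identity from the injectivity of $\skw\nabla$ on $\mathbb P_{k-2}(T;\mathbb K)\boldsymbol{x}$ (invoking Lemma~\ref{lem:skwgrad} a second time) and spell out the rank--nullity step for $\boldsymbol{\tau}\mapsto\boldsymbol{\tau}\boldsymbol{x}$, whereas the paper reads the same count off the exactness of the complex \eqref{eq:polycomplex1}, which was itself proved from that lemma.
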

\begin{proof}
Thanks to decomposition \eqref{eq:Pkvectordecomp}, we have 
\[
\skw\nabla\mathbb P_{k-1}(T;\mathbb R^d)=\skw\nabla(\mathbb P_{k-2}(T;\mathbb K)\boldsymbol{x}).
\]
By Lemma~\ref{lem:skwgrad}, $\skw\nabla\mathbb P_{k-1}(T;\mathbb R^d)\cap\big(\mathbb P_{k-2}(T;\mathbb K)\cap\ker(\boldsymbol{x})\big)=\{\boldsymbol{0}\}$. Then we only need to check dimensions.
Due to complex \eqref{eq:polycomplex1},
\begin{equation}\label{eq:20220324-1}  
\dim\skw\nabla\mathbb P_{k-1}(T;\mathbb R^d)=\dim\mathbb P_{k-1}(T;\mathbb R^d)-\dim\nabla\mathbb P_k(T).
\end{equation}
On the other side, by space decomposition \eqref{eq:Pkvectordecomp},
\[
\dim\mathbb P_{k-2}(T;\mathbb K)\boldsymbol{x}=\dim\mathbb P_{k-1}(T;\mathbb R^{d})-\dim\nabla\mathbb P_k(T).
\]
Hence, $\dim\skw\nabla\mathbb P_{k-1}(T;\mathbb R^d)=\dim\mathbb P_{k-2}(T;\mathbb K)\boldsymbol{x}$, which yields \eqref{eq:Pkskwtensordecomp}.
\end{proof}

% \begin{lemma}
% The polynomial complex
% \begin{equation}\label{eq:polycomplex2}
% \mathbb P_{k-2}(K;\mathbb K)\xrightarrow{\boldsymbol{\tau}\boldsymbol{x}}\mathbb P_{k-1}(K;\mathbb R^d) \xrightarrow{\boldsymbol{v}\cdot\boldsymbol{x}} \mathbb P_k(K)\xrightarrow{\pi_0}\mathbb R
% \end{equation}
% is exact, where $\pi_0q=q(0)$.
% \end{lemma}
% \begin{proof}
% Clearly \eqref{eq:polycomplex2} is a complex, and $\mathbb P_{k-1}(K;\mathbb R^d)\cdot\boldsymbol{x}=\mathbb P_k(K)\cap\ker(\pi_0)$. It suffices to prove $\mathbb P_{k-1}(K;\mathbb R^d)\cap\ker(\cdot\boldsymbol{x})\subseteq\mathbb P_{k-2}(K;\mathbb K)\boldsymbol{x}$.
% \end{proof}

 By \eqref{eq:Pkskwtensordecomp} and \eqref{eq:20220324-1}, it follows
\begin{equation}\label{eq:20220324-2}
\dim\mathbb P_{k-2}(T;\mathbb K)\cap\ker(\boldsymbol{x})=\dim\mathbb P_{k-2}(T;\mathbb K)+\dim\nabla\mathbb P_k(T)- \dim\mathbb P_{k-1}(T;\mathbb R^d).
\end{equation}

With the decomposition \eqref{eq:Pkskwtensordecomp} and $\mathbb P_{k-1}(F;\mathbb R^{d-1})=\nabla_FP_k(F)\oplus\mathbb P_{k-2}(F;\mathbb K)\boldsymbol{x}$, we are ready to define the finite element for differential $(d-2)$-form. Take $\mathbb P_k(T;\mathbb K)$ as the space of shape functions. The degrees of freedom are given by
\begin{align}
((\boldsymbol{n}_1^e)^{\intercal}\boldsymbol{\tau}\boldsymbol{n}_2^e, q)_e, & \quad q\in \mathbb P_k(e), e\in\mathcal E(T), \label{eq:-2formfemdof1} \\
(\div_F(\boldsymbol{\tau}\boldsymbol{n}), q)_F, & \quad q\in \mathbb P_{k-1}(F)/\mathbb R, F\in\mathcal F(T), \label{eq:-2formfemdof21} \\
(\boldsymbol{\tau}\boldsymbol{n}, \boldsymbol{q})_F, & \quad \boldsymbol{q}\in \mathbb P_{k-2}(F;\mathbb K)\boldsymbol{x}, F\in\mathcal F(T), \label{eq:-2formfemdof22} \\
(\div\boldsymbol{\tau}, \boldsymbol{q})_T, & \quad \boldsymbol{q}\in\mathbb P_{k-3}(T;\mathbb K)\boldsymbol{x}, \label{eq:-2formfemdof31} \\
(\boldsymbol{\tau}, \boldsymbol{q})_T, & \quad \boldsymbol{q}\in \mathbb P_{k-2}(T;\mathbb K)\cap\ker(\boldsymbol{x}). \label{eq:-2formfemdof32}
\end{align}
In DoF \eqref{eq:-2formfemdof1}, $\boldsymbol{n}_1^e$ and  $\boldsymbol{n}_2^e$ are two unit normal vectors of $e$ satisfying $\boldsymbol{n}_1^e\cdot\boldsymbol{n}_2^e=0$. 
\begin{lemma}\label{lem:normalskwtensor}
For $e\in\mathcal E(T)$, let $\tilde{\boldsymbol{n}}_1$ and  $\tilde{\boldsymbol{n}}_2$ be another two unit normal vectors of $e$ satisfying $\tilde{\boldsymbol{n}}_1\cdot\tilde{\boldsymbol{n}}_2=0$. Then
\[
\skw(\tilde{\boldsymbol{n}}_1\tilde{\boldsymbol{n}}_2^{\intercal})=\pm\skw(\boldsymbol{n}_1^e(\boldsymbol{n}_2^e)^{\intercal}).
\]
\end{lemma}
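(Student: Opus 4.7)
The plan is to exploit the fact that the normal space of $e$ at any point is a two-dimensional subspace of $\mathbb{R}^d$, since $e$ has codimension two. Both $\{\boldsymbol{n}_1^e,\boldsymbol{n}_2^e\}$ and $\{\tilde{\boldsymbol n}_1,\tilde{\boldsymbol n}_2\}$ are therefore orthonormal bases of this same two-dimensional plane. Consequently there exists an orthogonal $2\times 2$ matrix $R=\begin{pmatrix}a&b\\c&d\end{pmatrix}\in O(2)$, with $\det R=\pm 1$, such that
\[
\tilde{\boldsymbol n}_1 = a\,\boldsymbol{n}_1^e + c\,\boldsymbol{n}_2^e,\qquad \tilde{\boldsymbol n}_2 = b\,\boldsymbol{n}_1^e + d\,\boldsymbol{n}_2^e.
\]

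Next I would simply expand the outer product
\[
\tilde{\boldsymbol n}_1\tilde{\boldsymbol n}_2^{\intercal} = ab\,\boldsymbol{n}_1^e(\boldsymbol{n}_1^e)^{\intercal} + cd\,\boldsymbol{n}_2^e(\boldsymbol{n}_2^e)^{\intercal} + ad\,\boldsymbol{n}_1^e(\boldsymbol{n}_2^e)^{\intercal} + bc\,\boldsymbol{n}_2^e(\boldsymbol{n}_1^e)^{\intercal}
\]
and apply $\skw$. The two rank-one symmetric terms $\boldsymbol{n}_i^e(\boldsymbol{n}_i^e)^{\intercal}$ are killed by $\skw$, and since $\skw(\boldsymbol{n}_2^e(\boldsymbol{n}_1^e)^{\intercal}) = -\skw(\boldsymbol{n}_1^e(\boldsymbol{n}_2^e)^{\intercal})$, the remaining two contributions combine into
\[
\skw(\tilde{\boldsymbol n}_1\tilde{\boldsymbol n}_2^{\intercal}) = (ad-bc)\,\skw(\boldsymbol{n}_1^e(\boldsymbol{n}_2^e)^{\intercal}) = \det(R)\,\skw(\boldsymbol{n}_1^e(\boldsymbol{n}_2^e)^{\intercal}).
\]
The conclusion follows from $\det R\in\{+1,-1\}$.

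There is no real obstacle; the only thing that deserves a moment's care is the argument that both pairs span the \emph{same} normal plane (so that $R$ really acts inside a two-dimensional subspace) and that the orthonormality constraints on $\tilde{\boldsymbol n}_1,\tilde{\boldsymbol n}_2$ are exactly what forces the change-of-basis matrix to lie in $O(2)$. Once this is noted, the computation is one line and the $\pm$ is immediately identified with $\det R$.
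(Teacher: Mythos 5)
Your proposal is correct and follows essentially the same route as the paper: both express $(\tilde{\boldsymbol n}_1,\tilde{\boldsymbol n}_2)$ as an orthogonal change of basis of $(\boldsymbol n_1^e,\boldsymbol n_2^e)$ within the two-dimensional normal plane of $e$ and show the skew-symmetric part picks up exactly the factor $\det$ of that orthogonal matrix, which is $\pm1$. The paper merely packages your term-by-term expansion as the matrix identity $H\left(\begin{smallmatrix}0&1\\-1&0\end{smallmatrix}\right)H^{\intercal}=\det(H)\left(\begin{smallmatrix}0&1\\-1&0\end{smallmatrix}\right)$, so the two arguments are the same computation in different notation.
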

\begin{proof}
Notice that there exists an orthonormal matrix $H\in\mathbb R^{2\times2}$ such that 
$(\tilde{\boldsymbol{n}}_1, \tilde{\boldsymbol{n}}_2)=(\boldsymbol{n}_1^e, \boldsymbol{n}_2^e)H$.
Then
\begin{align*}
2\skw(\tilde{\boldsymbol{n}}_1\tilde{\boldsymbol{n}}_2^{\intercal})&=\tilde{\boldsymbol{n}}_1\tilde{\boldsymbol{n}}_2^{\intercal}-\tilde{\boldsymbol{n}}_2\tilde{\boldsymbol{n}}_1^{\intercal}=(\tilde{\boldsymbol{n}}_1, \tilde{\boldsymbol{n}}_2)\begin{pmatrix}
0 & 1 \\
-1 & 0
\end{pmatrix}\begin{pmatrix}
\tilde{\boldsymbol{n}}_1^{\intercal} \\
\tilde{\boldsymbol{n}}_2^{\intercal}  
\end{pmatrix}
\\
&=(\boldsymbol{n}_1^e, \boldsymbol{n}_2^e)H\begin{pmatrix}
0 & 1 \\
-1 & 0
\end{pmatrix}H^{\intercal}(\boldsymbol{n}_1^e, \boldsymbol{n}_2^e)^{\intercal}.
\end{align*}
By a direct computation, $H\begin{pmatrix}
0 & 1 \\
-1 & 0
\end{pmatrix}H^{\intercal}={\rm det}(H)\begin{pmatrix}
0 & 1 \\
-1 & 0
\end{pmatrix}$. Hence
\[
2\skw(\tilde{\boldsymbol{n}}_1\tilde{\boldsymbol{n}}_2^{\intercal})=2{\rm det}(H)\skw(\boldsymbol{n}_1^e(\boldsymbol{n}_2^e)^{\intercal}),
\]
which ends the proof.
\end{proof}

\begin{lemma}\label{lem:-2formfemfaceunisolvence}
Let $\boldsymbol{\tau}\in\mathbb P_k(T;\mathbb K)$ and $F\in\mathcal F(T)$.
Assume the degrees of freedom \eqref{eq:-2formfemdof1}-\eqref{eq:-2formfemdof22} on $F$ vanish. 
Then $\boldsymbol{\tau}\boldsymbol{n}|_F=\boldsymbol{0}$.
\end{lemma}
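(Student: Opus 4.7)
The plan is to view $\boldsymbol{v}:=\boldsymbol{\tau}\boldsymbol{n}|_F$ as a tangential polynomial vector field on the $(d-1)$-simplex $F$ and reduce the claim to the unisolvence of the Brezzi--Douglas--Marini element of degree $k$ on $F$. First, since $\boldsymbol{\tau}$ is antisymmetric, $\boldsymbol{v}\cdot\boldsymbol{n}=\boldsymbol{n}^{\intercal}\boldsymbol{\tau}\boldsymbol{n}=0$, so $\boldsymbol{v}$ belongs to the space of tangential polynomial fields of degree $k$ on $F$. For each $e\in\mathcal E(F)$, I would specialize the two unit normals in \eqref{eq:-2formfemdof1} to $\boldsymbol{n}_1^e=\boldsymbol{n}$ and $\boldsymbol{n}_2^e=\boldsymbol{n}_{F,e}$, which are orthonormal and both normal to $e$; Lemma~\ref{lem:normalskwtensor} guarantees that this choice is as good as any other orthonormal pair, up to sign. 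With it, $(\boldsymbol{n}_1^e)^{\intercal}\boldsymbol{\tau}\boldsymbol{n}_2^e=-\boldsymbol{v}\cdot\boldsymbol{n}_{F,e}$, so the vanishing of \eqref{eq:-2formfemdof1} against all $q\in\mathbb P_k(e)$ forces $\boldsymbol{v}\cdot\boldsymbol{n}_{F,e}=0$ on every edge $e\in\mathcal E(F)$.

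Next, I would combine this edge vanishing with the divergence theorem on $F$ to conclude $\int_F\div_F\boldsymbol{v}=\sum_{e\in\mathcal E(F)}\int_e\boldsymbol{v}\cdot\boldsymbol{n}_{F,e}=0$. Hence $\div_F\boldsymbol{v}\in\mathbb P_{k-1}(F)/\mathbb R$, and \eqref{eq:-2formfemdof21} tested against $\div_F\boldsymbol{v}$ itself yields $\div_F\boldsymbol{v}=0$. At this stage $\boldsymbol{v}$ is a tangential degree-$k$ polynomial on $F$ satisfying (a) vanishing normal trace on $\partial F$, (b) vanishing surface divergence, and (c) $L^2(F)$-orthogonality to $\mathbb P_{k-2}(F;\mathbb K)\boldsymbol{x}$ by \eqref{eq:-2formfemdof22}. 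These are precisely the conditions of BDM-$k$ unisolvence on the $(d-1)$-simplex $F$, and that argument (dimension count together with the exactness of the face analogue of the polynomial complex \eqref{eq:polycomplex1}) forces $\boldsymbol{v}=\boldsymbol{0}$.

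The main obstacle is the reconciliation in (c) between the interior DoF \eqref{eq:-2formfemdof22}, phrased in the ambient $d\times d$ skew language with the full position vector $\boldsymbol{x}$, and the intrinsic $(d-1)$-dimensional BDM interior test space naturally living on $F$. To bridge this I would fix an orthonormal frame $\{\boldsymbol{e}_1,\ldots,\boldsymbol{e}_{d-1},\boldsymbol{n}\}$, embed any $(d-1)$-dimensional skew matrix into $\mathbb K$ by padding the row and column corresponding to $\boldsymbol{n}$ with zeros, and use $\boldsymbol{x}=\boldsymbol{x}_F+c\boldsymbol{n}$ (with $c$ constant on the affine face $F$) to check that such an embedded test function restricts on $F$ to its intrinsic tangential counterpart $\boldsymbol{\tau}_F\boldsymbol{x}_F$, the remaining normal component being annihilated by the tangency of $\boldsymbol{v}$ in the pairing $(\boldsymbol{v},\boldsymbol{q})_F$. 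With this identification the unisolvence goes through as in the standard BDM proof.
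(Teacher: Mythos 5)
Your proof is correct and takes essentially the same route as the paper: Lemma~\ref{lem:normalskwtensor} turns the vanishing edge DoFs \eqref{eq:-2formfemdof1} into the vanishing of the normal trace of $\boldsymbol{\tau}\boldsymbol{n}$ on $\partial F$, after which the face DoFs \eqref{eq:-2formfemdof21}--\eqref{eq:-2formfemdof22} are precisely the interior BDM DoFs on $F$, and BDM unisolvence on the $(d-1)$-simplex $F$ gives $\boldsymbol{\tau}\boldsymbol{n}|_F=\boldsymbol{0}$. Your additional details (the zero mean of $\div_F(\boldsymbol{\tau}\boldsymbol{n})$ and the intrinsic-versus-ambient identification of the test space in \eqref{eq:-2formfemdof22}) simply spell out steps the paper leaves implicit in citing the unisolvence of the BDM element on $F$.
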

\begin{proof}
Due to \eqref{eq:-2formfemdof1},  we get $(\boldsymbol{n}_1^e)^{\intercal}\boldsymbol{\tau}\boldsymbol{n}_2^e|_e=0$ on each $e\in\mathcal E(F)$, which together with Lemma~\ref{lem:normalskwtensor} indicates $\boldsymbol{n}_{F,e}^{\intercal}\boldsymbol{\tau}\boldsymbol{n}_F|_e=0$. By the unisolvence of BDM element on face $F$, cf. DoFs \eqref{BDMdof1}-\eqref{BDMdof3}, it follows from DoFs \eqref{eq:-2formfemdof21}-\eqref{eq:-2formfemdof22} that $\boldsymbol{\tau}\boldsymbol{n}|_F=\boldsymbol{0}$.
\end{proof}

\begin{lemma}\label{lem:PkKbubblecharac}
For $\boldsymbol{\tau}\in\mathbb P_k(T;\mathbb K)$, $\boldsymbol{\tau}\boldsymbol{n}|_{F_i}=\boldsymbol{0}$ for $i=1,\ldots, d$, if and only if 
\begin{equation}\label{eq:PkKbubblecharac}
\boldsymbol{\tau}=\sum_{1\leq i<j\leq d}\lambda_i\lambda_jq_{ij}\boldsymbol{N}_{ij}
\end{equation}
for some $q_{ij}\in\mathbb P_{k-2}(T)$. Here $\{\boldsymbol{N}_{ij}\}_{1\leq i<j\leq d}$ denotes the basis of $\mathbb K$ being dual to $\{\skw({\boldsymbol n_i\boldsymbol n_j^{\intercal}})\}_{1\leq i<j\leq d}$, i.e.,
\[
\boldsymbol{N}_{ij}:\skw({\boldsymbol n_l\boldsymbol n_m^{\intercal}})=\delta_{il}\delta_{jm},\quad 1\leq i<j\leq d,\; 1\leq l<m\leq d.
\]
\end{lemma}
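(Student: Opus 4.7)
The plan is to expand $\boldsymbol{\tau}$ in the basis $\{\boldsymbol{N}_{ij}\}_{1\le i<j\le d}$ of $\mathbb{K}$, writing $\boldsymbol{\tau}=\sum_{i<j}p_{ij}\boldsymbol{N}_{ij}$ with $p_{ij}\in\mathbb{P}_k(T)$, and then to translate the $d$ face-trace conditions into divisibility conditions on the scalar coefficients $p_{ij}$.

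The first step is to derive an explicit formula for $\boldsymbol{N}_{ij}\boldsymbol{n}_l$. Let $\{\boldsymbol{t}_1,\dots,\boldsymbol{t}_d\}$ denote the basis of $\mathbb{R}^d$ dual to $\{\boldsymbol{n}_1,\dots,\boldsymbol{n}_d\}$, so that $\boldsymbol{t}_p\cdot\boldsymbol{n}_q=\delta_{pq}$. The antisymmetry of $\boldsymbol{N}_{ij}$ combined with the defining duality
\[
\boldsymbol{N}_{ij}:\skw(\boldsymbol{n}_l\boldsymbol{n}_m^{\intercal})=\delta_{il}\delta_{jm}\quad\text{for }i<j,\ l<m
\]
gives $\boldsymbol{n}_p^{\intercal}\boldsymbol{N}_{ij}\boldsymbol{n}_l=\delta_{ip}\delta_{jl}-\delta_{il}\delta_{jp}$ for all $p,l$. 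Expanding a vector in the dual basis as $\boldsymbol{v}=\sum_p(\boldsymbol{n}_p^{\intercal}\boldsymbol{v})\boldsymbol{t}_p$ then yields the compact identity
\[
\boldsymbol{N}_{ij}\boldsymbol{n}_l=\delta_{jl}\boldsymbol{t}_i-\delta_{il}\boldsymbol{t}_j,
\]
so in particular $\boldsymbol{N}_{ij}\boldsymbol{n}_l=\boldsymbol{0}$ whenever $l\notin\{i,j\}$.

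Sufficiency is immediate: in each summand $\lambda_i\lambda_jq_{ij}\boldsymbol{N}_{ij}\boldsymbol{n}_l$, either $\boldsymbol{N}_{ij}\boldsymbol{n}_l$ already vanishes (if $l\notin\{i,j\}$) or one of the factors $\lambda_i,\lambda_j$ coincides with $\lambda_l$ (if $l\in\{i,j\}$), which vanishes on $F_l=\{\lambda_l=0\}$. For necessity, the identity above yields
\[
\boldsymbol{\tau}\boldsymbol{n}_l=\sum_{i<l}p_{il}\boldsymbol{t}_i-\sum_{j>l}p_{lj}\boldsymbol{t}_j,
\]
and the linear independence of $\{\boldsymbol{t}_1,\dots,\boldsymbol{t}_d\}$ converts the hypothesis $\boldsymbol{\tau}\boldsymbol{n}_l|_{F_l}=\boldsymbol{0}$ into $p_{ij}|_{F_l}=0$ for every pair $(i,j)$ containing $l$. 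Running this over $l=1,\dots,d$ gives $p_{ij}|_{F_i}=p_{ij}|_{F_j}=0$ for all $1\le i<j\le d$. Since $F_i=\{\lambda_i=0\}$ and $\lambda_i,\lambda_j$ are independent linear factors, standard polynomial divisibility on the simplex produces $p_{ij}=\lambda_i\lambda_jq_{ij}$ with $q_{ij}\in\mathbb{P}_{k-2}(T)$, giving \eqref{eq:PkKbubblecharac}.

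The only step requiring genuine care is the explicit evaluation of $\boldsymbol{N}_{ij}\boldsymbol{n}_l$ via the dual-basis identity, since the $\boldsymbol{n}_i$ are generally not orthonormal and signs have to be tracked through the antisymmetrization; once that formula is in hand, both implications reduce to transparent observations about barycentric divisibility.
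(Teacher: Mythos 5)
Your proposal is correct and follows essentially the same route as the paper: expand $\boldsymbol{\tau}=\sum_{i<j}p_{ij}\boldsymbol{N}_{ij}$, use the duality relation (equivalently $p_{ij}=\boldsymbol{n}_i^{\intercal}\boldsymbol{\tau}\boldsymbol{n}_j$ and $\boldsymbol{N}_{ij}\boldsymbol{n}_l=\boldsymbol{0}$ for $l\notin\{i,j\}$) to deduce $p_{ij}|_{F_i}=p_{ij}|_{F_j}=0$, and conclude by divisibility by $\lambda_i\lambda_j$. Your explicit dual-basis formula $\boldsymbol{N}_{ij}\boldsymbol{n}_l=\delta_{jl}\boldsymbol{t}_i-\delta_{il}\boldsymbol{t}_j$ is just a more detailed derivation of the facts the paper invokes directly, so no substantive difference remains.
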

\begin{proof}
For $1\leq l\leq d$ but $l\neq i, j$, by the definition of $\boldsymbol{N}_{ij}$, it holds $\boldsymbol{N}_{ij}\boldsymbol{n}_l=\boldsymbol{0}$. Hence, for $
\boldsymbol{\tau}=\sum\limits_{1\leq i<j\leq d}\lambda_i\lambda_jq_{ij}\boldsymbol{N}_{ij}
$, obviously we have $\boldsymbol{\tau}\boldsymbol{n}|_{F_i}=\boldsymbol{0}$ for $i=1,\ldots, d$.

On the other side, assume $\boldsymbol{\tau}\boldsymbol{n}|_{F_i}=\boldsymbol{0}$ for $i=1,\ldots, d$. Express $\boldsymbol{\tau}$ as 
\[
\boldsymbol{\tau}=\sum_{1\leq i<j\leq d}p_{ij}\boldsymbol{N}_{ij},
\]
where $p_{ij}=\boldsymbol{n}_i^{\intercal}\boldsymbol{\tau}\boldsymbol{n}_j\in\mathbb P_{k}(T)$.
Therefore, $p_{ij}|_{F_i}=p_{ij}|_{F_j}=0$, which ends the proof.
\end{proof}

\begin{lemma}
The degrees of freedom \eqref{eq:-2formfemdof1}-\eqref{eq:-2formfemdof32} are uni-solvent for $\mathbb P_k(T;\mathbb K)$.
\end{lemma}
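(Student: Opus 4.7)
The plan is a standard two-step unisolvence argument.

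\textbf{Dimension count.} I would first check that the number of DoFs in \eqref{eq:-2formfemdof1}--\eqref{eq:-2formfemdof32} equals $\dim\mathbb P_k(T;\mathbb K)=\binom{d}{2}\binom{d+k}{d}$. On each $(d-1)$-face $F$, the DoFs \eqref{eq:-2formfemdof21}--\eqref{eq:-2formfemdof22} together with the restrictions of the edge DoFs \eqref{eq:-2formfemdof1} to $\partial F$ form a complete set of BDM-type DoFs on $F$ for the tangential trace $\boldsymbol{\tau}\boldsymbol{n}|_F$; their count follows from the face analogue of \eqref{eq:Pkvectordecomp}. The interior DoF counts come from \eqref{eq:Pkvectordecomp} applied at degree $k-2$ and from \eqref{eq:20220324-2}. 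Summing all contributions yields exactly $\dim\mathbb P_k(T;\mathbb K)$.

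\textbf{Kernel vanishing.} Suppose every DoF vanishes on some $\boldsymbol{\tau}\in\mathbb P_k(T;\mathbb K)$. Applying Lemma~\ref{lem:-2formfemfaceunisolvence} on each $F\in\mathcal F(T)$ gives $\boldsymbol{\tau}\boldsymbol{n}|_F=\boldsymbol{0}$, and hence $\boldsymbol{\tau}\boldsymbol{n}|_{\partial T}=\boldsymbol{0}$. Lemma~\ref{lem:PkKbubblecharac} then supplies the bubble form $\boldsymbol{\tau}=\sum_{1\leq i<j\leq d}\lambda_i\lambda_j q_{ij}\boldsymbol{N}_{ij}$ with $q_{ij}\in\mathbb P_{k-2}(T)$. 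Since $\boldsymbol{\tau}\boldsymbol{n}|_{\partial T}=\boldsymbol{0}$ and $\boldsymbol{\tau}$ is antisymmetric, integration by parts yields
\begin{equation*}
(\boldsymbol{\tau},\skw\nabla\boldsymbol{v})_T=(\boldsymbol{\tau},\nabla\boldsymbol{v})_T=-(\div\boldsymbol{\tau},\boldsymbol{v})_T
\end{equation*}
for any $\boldsymbol{v}\in\mathbb P_{k-1}(T;\mathbb R^d)$; in particular, by \eqref{eq:Pkvectordecomp} and $\skw\nabla\nabla=\boldsymbol{0}$, the identity only needs to be tested on $\boldsymbol{v}\in\mathbb P_{k-2}(T;\mathbb K)\boldsymbol{x}$.

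\textbf{Conclusion and main obstacle.} To finish, I would combine the IBP identity with the decomposition \eqref{eq:Pkskwtensordecomp} to test $\boldsymbol{\tau}$ against all of $\mathbb P_{k-2}(T;\mathbb K)$. The components of an arbitrary $\boldsymbol{\sigma}\in\mathbb P_{k-2}(T;\mathbb K)$ lying in $\mathbb P_{k-2}(T;\mathbb K)\cap\ker(\boldsymbol{x})$ are killed directly by \eqref{eq:-2formfemdof32}, while those in $\skw\nabla\mathbb P_{k-1}(T;\mathbb R^d)$ are converted by the IBP identity into a divergence pairing, controlled on $\mathbb P_{k-3}(T;\mathbb K)\boldsymbol{x}$ by \eqref{eq:-2formfemdof31}. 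The delicate point is the one-degree gap between the IBP test space $\mathbb P_{k-2}(T;\mathbb K)\boldsymbol{x}$ and the DoF test space $\mathbb P_{k-3}(T;\mathbb K)\boldsymbol{x}$; bridging it requires exploiting the extra normal-trace vanishing on the face $F_0$ not used by Lemma~\ref{lem:PkKbubblecharac}, whose algebraic consequences on the coefficients $q_{ij}$, combined with the interior DoFs, force each $q_{ij}=0$ and therefore $\boldsymbol{\tau}=\boldsymbol{0}$.
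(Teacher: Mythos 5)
Your dimension count and the reduction to $\boldsymbol{\tau}\boldsymbol{n}|_{\partial T}=\boldsymbol{0}$ via Lemma~\ref{lem:-2formfemfaceunisolvence} are sound, and your integration-by-parts identity is the right tool. But the final step is a genuine gap, and you have located it yourself: to kill the $\skw\nabla\mathbb P_{k-1}(T;\mathbb R^d)$ component you need $(\boldsymbol{\tau},\skw\nabla\boldsymbol{v})_T=-(\div\boldsymbol{\tau},\boldsymbol{v})_T$ to vanish for all $\boldsymbol{v}\in\mathbb P_{k-2}(T;\mathbb K)\boldsymbol{x}$, whereas \eqref{eq:-2formfemdof31} only controls $\boldsymbol{v}\in\mathbb P_{k-3}(T;\mathbb K)\boldsymbol{x}$. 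The proposed bridge --- ``algebraic consequences on the $q_{ij}$ of the normal-trace vanishing on the face $F_0$, combined with the interior DoFs'' --- is not an argument: nothing is actually derived from it, and invoking the bubble form of Lemma~\ref{lem:PkKbubblecharac} before you know $(\boldsymbol{\tau},\boldsymbol{q})_T=0$ for all $\boldsymbol{q}\in\mathbb P_{k-2}(T;\mathbb K)$ leaves you with no usable equation on the $q_{ij}$.

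The missing idea is to prove $\div\boldsymbol{\tau}=\boldsymbol{0}$ outright, before testing $\boldsymbol{\tau}$ itself. Since $\boldsymbol{\tau}$ is antisymmetric and $\boldsymbol{\tau}\boldsymbol{n}|_{\partial T}=\boldsymbol{0}$, also $\boldsymbol{n}^{\intercal}\boldsymbol{\tau}|_{\partial T}=\boldsymbol{0}$, and the surface-divergence identity \eqref{eq:20220324-3} gives $\boldsymbol{n}^{\intercal}\div\boldsymbol{\tau}=\div_F(\boldsymbol{n}^{\intercal}\boldsymbol{\tau})=0$ on every face of $T$. Hence the field $\div\boldsymbol{\tau}\in\mathbb P_{k-1}(T;\mathbb R^d)$ has vanishing BDM face DoFs \eqref{BDMdof1}; its DoFs \eqref{BDMdof2} vanish because $\div\div\boldsymbol{\tau}=0$ by antisymmetry; and its DoFs \eqref{BDMdof3} vanish precisely by \eqref{eq:-2formfemdof31}. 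BDM unisolvence then forces $\div\boldsymbol{\tau}=\boldsymbol{0}$, and the one-degree mismatch disappears: your identity now yields $(\boldsymbol{\tau},\skw\nabla\mathbb P_{k-1}(T;\mathbb R^d))_T=0$, which together with \eqref{eq:-2formfemdof32} and the decomposition \eqref{eq:Pkskwtensordecomp} gives $(\boldsymbol{\tau},\boldsymbol{q})_T=0$ for all $\boldsymbol{q}\in\mathbb P_{k-2}(T;\mathbb K)$. Only at this point should you invoke Lemma~\ref{lem:PkKbubblecharac} and test with $\boldsymbol{q}=q_{ij}\skw(\boldsymbol{n}_i\boldsymbol{n}_j^{\intercal})$ to conclude $q_{ij}=0$ and $\boldsymbol{\tau}=\boldsymbol{0}$.
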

\begin{proof}
By $\mathbb P_{k-1}(F;\mathbb R^{d-1})=\nabla_FP_k(F)\oplus\mathbb P_{k-2}(F;\mathbb K)\boldsymbol{x}$,
the number of degrees of freedom \eqref{eq:-2formfemdof21}-\eqref{eq:-2formfemdof22} is
$
(d^2+d){k+d-2\choose k-1} - (d+1){k+d-1\choose k}.
$
Using \eqref{eq:Pkvectordecomp} and \eqref{eq:20220324-2},
the number of degrees of freedom \eqref{eq:-2formfemdof1}-\eqref{eq:-2formfemdof32} is
\begin{align*}
&\frac{1}{2}(d^2+d){k+d-2\choose k} + (d^2+d){k+d-2\choose k-1} - (d+1){k+d-1\choose k} \\
&+ \frac{1}{2}(d^2+d){k+d-2\choose k-2}+{k+d\choose k}-(d+1){k+d-1\choose k-1} =\frac{1}{2}(d^2-d){k+d\choose k},
\end{align*}
which equals to $\dim\mathbb P_k(T;\mathbb K)$.

Assume $\boldsymbol{\tau}\in\mathbb P_k(T;\mathbb K)$ and all the degrees of freedom \eqref{eq:-2formfemdof1}-\eqref{eq:-2formfemdof32} vanish. 
It holds from Lemma~\ref{lem:-2formfemfaceunisolvence} that $\boldsymbol{\tau}\boldsymbol{n}|_{\partial T}=\boldsymbol{0}$. Noting that $\boldsymbol{\tau}$ is antisymmetric,  we also have $\boldsymbol{n}^{\intercal}\boldsymbol{\tau}|_{\partial T}=\boldsymbol{0}$. On each $F\in\mathcal F(T)$, it holds
\begin{equation}\label{eq:20220324-3}
\boldsymbol{n}^{\intercal}\div\boldsymbol{\tau}=\div(\boldsymbol{n}^{\intercal}\boldsymbol{\tau})= \div_F(\boldsymbol{n}^{\intercal}\boldsymbol{\tau})+\partial_n(\boldsymbol{n}^{\intercal}\boldsymbol{\tau}\boldsymbol{n})=\div_F(\boldsymbol{n}^{\intercal}\boldsymbol{\tau}).
\end{equation}
Hence $\boldsymbol{n}^{\intercal}\div\boldsymbol{\tau}|_{\partial T}=0$. 
Thanks to DoFs \eqref{BDMdof1}-\eqref{BDMdof3} for BDM element, we acquire from DoF \eqref{eq:-2formfemdof31} and $\div\div\boldsymbol{\tau}=0$ that $\div\boldsymbol{\tau}=\boldsymbol{0}$, which together with DoF \eqref{eq:-2formfemdof32} and decomposition \eqref{eq:Pkskwtensordecomp} gives
\[
(\boldsymbol{\tau}, \boldsymbol{q})_T=0  \quad \forall~\boldsymbol{q}\in \mathbb P_{k-2}(T;\mathbb K).
\]
Applying Lemma~\ref{lem:PkKbubblecharac}, $\boldsymbol{\tau}$ has the expression as in \eqref{eq:PkKbubblecharac}. Taking $\boldsymbol{q}=q_{ij}\skw({\boldsymbol n_i\boldsymbol n_j^{\intercal}})$ in the last equation for $1\leq i<j\leq d$, we get $q_{ij}=0$. Thus $\boldsymbol{\tau}=\boldsymbol{0}$.
\end{proof}

For polygon $K\in \mathcal T_h$, define the local finite element space for differential $(d-2)$-form  
\begin{align*}  
\boldsymbol{V}_{k}^{d-2}(K):=\{\boldsymbol{\tau}\in\boldsymbol{L}^2(K;\mathbb K)&: \boldsymbol{\tau}|_{T}\in \mathbb P_{k}(T;\mathbb K) \textrm{ for each } T\in\mathcal T_K, \\
&\;\;\textrm{ all the DoFs \eqref{eq:-2formfemdof1}-\eqref{eq:-2formfemdof22} are single-valued}\}.
\end{align*}
Thanks to Lemma~\ref{lem:-2formfemfaceunisolvence}, space $\boldsymbol{V}_{k}^{d-2}(K)$ is $H\Lambda^{d-2}$-conforming.
Define $\mathring{\boldsymbol{V}}_{k}^{d-2}(K):=\boldsymbol{V}_{k}^{d-2}(K)\cap \mathring{H}\Lambda^{d-2}(K)$, where $\mathring{H}\Lambda^{d-2}(K)$ is the subspace of $H\Lambda^{d-2}(K)$ with homogeneous boundary condition.
Notice that
 $\boldsymbol{V}_{k}^{d-2}(K)$ is the Lagrange element space for $d=2$,
and $\boldsymbol{V}_{k}^{d-2}(K)$ is the second kind N\'ed\'elec element space for $d=3$ \cite{Nedelec:1986family}. 

% To figure out degrees of freedom for space $\boldsymbol{V}_{k-1}^{\rm div}(K)$,
Recall the local finite element de Rham complexes in \cite{ArnoldFalkWinther2006,Arnold2018}.
For completeness, we will prove the exactness of these complexes.

\begin{lemma}
Let $k\geq2$. Finite element complexes
\begin{equation}\label{eq:localBDMfemderhamcomplex}
\boldsymbol{V}_{k}^{d-2}(K)\xrightarrow{\div\skw}\boldsymbol{V}_{k-1}^{\mathrm{BDM}}(K)\xrightarrow{\div} V_{k-2}^{L^2}(K)\to0,    
\end{equation}
\begin{equation}\label{eq:localRTfemderhamcomplex}
\boldsymbol{V}_{1}^{d-2}(K)\xrightarrow{\div\skw}\boldsymbol{V}^{\mathrm{RT}}(K)\xrightarrow{\div} V_{0}^{L^2}(K)\to0,    
\end{equation}
\begin{equation}\label{eq:localBDMfemderhamcomplex0}
\mathring{\boldsymbol{V}}_{k}^{d-2}(K)\xrightarrow{\div\skw}\mathring{\boldsymbol{V}}_{k-1}^{\mathrm{BDM}}(K)\xrightarrow{\div} \mathring{V}_{k-2}^{L^2}(K)\to0,    
\end{equation}
\begin{equation}\label{eq:localRTfemderhamcomplex0}
\mathring{\boldsymbol{V}}_{1}^{d-2}(K)\xrightarrow{\div\skw}\mathring{\boldsymbol{V}}^{\mathrm{RT}}(K)\xrightarrow{\div} \mathring{V}_{0}^{L^2}(K)\to0,    
\end{equation}
are exact, where $\mathring{V}_{k-2}^{L^2}(K):=V_{k-2}^{L^2}(K)/\mathbb R$, and
\[
V_{k-2}^{L^2}(K):=\{v\in L^2(K): v|_{T}\in \mathbb P_{k-2}(T) \textrm{ for each } T\in\mathcal T_K\}.
\]
\end{lemma}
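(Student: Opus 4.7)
The plan is to handle all four complexes in parallel, since the $\mathring{}$-versions follow from the same arguments with boundary-zero DoFs, and the $k=1$/RT case is a mild specialization. I focus on \eqref{eq:localBDMfemderhamcomplex}. Three things are needed: the complex property, surjectivity of $\div$, and exactness at the middle term.

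The complex property is immediate: for any antisymmetric $\boldsymbol{\tau}$ one has $\div\skw\boldsymbol{\tau}=\div\boldsymbol{\tau}$ and $\div\div\boldsymbol{\tau}=\sum_{i,j}\partial_i\partial_j\tau_{ij}=0$ pointwise by antisymmetry versus symmetry of mixed partials. The $H\Lambda^{d-2}$-conformity from Lemma~\ref{lem:-2formfemfaceunisolvence} guarantees that $\div\boldsymbol{\tau}$ is $H(\div)$-conforming across interior faces of $\mathcal{T}_K$, so $\div\skw:\boldsymbol{V}_k^{d-2}(K)\to\boldsymbol{V}_{k-1}^{\mathrm{BDM}}(K)$ is well-defined and composes with $\div$ to zero.

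Surjectivity of $\div$ onto $V_{k-2}^{L^2}(K)$ is the standard BDM/RT fact on the simplicial partition $\mathcal{T}_K$: given $q\in V_{k-2}^{L^2}(K)$, DoF \eqref{BDMdof2} provides simplex-local surjectivity modulo constants, and the freedom of internal-face normal traces (DoF \eqref{BDMdof1}) allows fixing the constants one simplex at a time by a tree traversal of the dual graph of $\mathcal{T}_K$. In the $\mathring{}$-version, the Gauss obstruction $\int_K\div\boldsymbol{v}=\int_{\partial K}\boldsymbol{v}\cdot\boldsymbol{n}=0$ matches the quotient by $\mathbb{R}$ built into $\mathring{V}_{k-2}^{L^2}(K)$.

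The main obstacle is exactness at the middle. My plan is a dimension count. Let
\begin{equation*}
Z=\ker\bigl(\div|_{\boldsymbol{V}_{k-1}^{\mathrm{BDM}}(K)}\bigr),\qquad I=\img\bigl(\div\skw|_{\boldsymbol{V}_k^{d-2}(K)}\bigr).
\end{equation*}
The complex property gives $I\subseteq Z$, and the preceding surjectivity yields $\dim Z=\dim\boldsymbol{V}_{k-1}^{\mathrm{BDM}}(K)-\dim V_{k-2}^{L^2}(K)$. It remains to prove $\dim I\geq\dim Z$, i.e., to bound $\dim\ker(\div\skw|_{\boldsymbol{V}_k^{d-2}(K)})$ from above by $\dim\boldsymbol{V}_k^{d-2}(K)-\dim Z$. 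I would compute the three dimensions from the DoFs, using the polynomial exactness \eqref{eq:polycomplex1} and the direct decomposition \eqref{eq:Pkskwtensordecomp} to collect edge, face, and interior contributions into a clean Euler-characteristic identity. As a cross-check and alternative route, note that each $K$ is star-shaped (hence contractible) by (A1), so the continuous de Rham complex on $K$ is exact; a commuting diagram between the canonical interpolations into $\boldsymbol{V}_k^{d-2}(K)$ and $\boldsymbol{V}_{k-1}^{\mathrm{BDM}}(K)$ then transports the continuous preimage $\tilde\boldsymbol{\tau}\in H\Lambda^{d-2}(K)$ to the discrete one, with identity \eqref{eq:20220324-3} supplying the crucial DoF-level compatibility at \eqref{eq:-2formfemdof21} and \eqref{eq:-2formfemdof31}. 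The boundary-vanishing variants \eqref{eq:localBDMfemderhamcomplex0} and \eqref{eq:localRTfemderhamcomplex0} follow by the same reasoning after restricting to DoFs supported in the interior of $K$.
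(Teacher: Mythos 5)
Your primary route does not close. The dimension count you propose requires knowing $\dim\ker\bigl(\div\skw|_{\boldsymbol{V}_k^{d-2}(K)}\bigr)$, and nothing in the paper (or in your sketch) provides this: characterizing that kernel is exactly as hard as proving exactness of the local finite element de Rham complex one step earlier, which would require constructing the $\Lambda^{0},\dots,\Lambda^{d-3}$ macro elements on $\mathcal T_K$ that are never defined here. The ``clean Euler-characteristic identity'' you invoke is therefore not a computation you can perform from the DoFs \eqref{eq:-2formfemdof1}--\eqref{eq:-2formfemdof32} and the polynomial decompositions alone; as stated it assumes what is to be proved. The paper avoids this entirely: it proves $\boldsymbol{V}_{k-1}^{\mathrm{BDM}}(K)\cap\ker(\div)\subseteq\div\skw\boldsymbol{V}_{k}^{d-2}(K)$ directly, by taking $\boldsymbol v$ in the kernel, producing a regular potential $\boldsymbol\tau\in\boldsymbol H^1(K;\mathbb K)$ with $\div\boldsymbol\tau=\boldsymbol v$ via Costabel--McIntosh, interpolating it to $\boldsymbol\sigma\in\boldsymbol{V}_k^{d-2}(K)$, and then checking that $\div\boldsymbol\sigma$ and $\boldsymbol v$ share all BDM DoFs \eqref{BDMdof1}--\eqref{BDMdof3}, using \eqref{eq:20220324-3} together with DoFs \eqref{eq:-2formfemdof1}, \eqref{eq:-2formfemdof21}, \eqref{eq:-2formfemdof31} and $\div\div\boldsymbol\sigma=0$; BDM unisolvence then gives $\div\boldsymbol\sigma=\boldsymbol v$. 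No dimension count of the image of $\div\skw$ is ever needed for exactness at the middle term (the identity \eqref{eq:20220324-4} is derived afterwards as a consequence).

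Your ``cross-check'' paragraph is in fact the paper's argument, but as written it also has a gap: a preimage merely in $H\Lambda^{d-2}(K)$ is not regular enough for the canonical interpolation, since the edge DoFs \eqref{eq:-2formfemdof1} live on $(d-2)$-dimensional faces and require extra trace regularity. That is precisely why the paper invokes Theorem 1.1 of Costabel--McIntosh to get a potential in $\boldsymbol H^1(K;\mathbb K)$, and why the commuting property is not cited abstractly but verified DoF by DoF through \eqref{eq:20220324-3}. The remaining ingredients of your proposal are fine and match the paper: the complex property is immediate, and the surjectivity of $\div$ (with the mean-value/quotient adjustment in the $\mathring{}$-versions) is treated as a known BDM/RT fact, for which the paper simply cites the literature.
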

\begin{proof}
We only prove complex \eqref{eq:localBDMfemderhamcomplex}, since the argument for the rest complexes is similar. Clearly \eqref{eq:localBDMfemderhamcomplex} is a complex. We refer to \cite[Section 4]{ChenHuang2021divX} for the proof of $\div\boldsymbol{V}_{k-1}^{\mathrm{BDM}}(K)=V_{k-2}^{L^2}(K)$.

Next prove $\boldsymbol{V}_{k-1}^{\mathrm{BDM}}(K)\cap\ker(\div)=\div\skw\boldsymbol{V}_{k}^{d-2}(K)$. For $\boldsymbol{v}\in\boldsymbol{V}_{k-1}^{\mathrm{BDM}}(K)\cap\ker(\div)$, by Theorem 1.1 in \cite{CostabelMcIntosh2010}, there exists $\boldsymbol{\tau}\in\boldsymbol{H}^1(K;\mathbb K)$ satisfying $\div\boldsymbol{\tau}=\div\skw\boldsymbol{\tau}=\boldsymbol{v}$. Let $\boldsymbol{\sigma}\in \boldsymbol{V}_{k}^{d-2}(K)$ be the nodal interpolation of $\boldsymbol{\tau}$ based on DoFs \eqref{eq:-2formfemdof1}-\eqref{eq:-2formfemdof32}. Thanks to DoF \eqref{eq:-2formfemdof1}, it follows from the integration by parts that
\[
(\div_F(\boldsymbol{\sigma}\boldsymbol{n}), 1)_F=(\boldsymbol{v}\cdot\boldsymbol{n}, 1)_F\quad\forall~F\in\mathcal F(\mathcal T_K),
\]
which together with \eqref{eq:20220324-3} and DoF \eqref{eq:-2formfemdof21} that
\[
(\boldsymbol{n}^{\intercal}\div\boldsymbol{\sigma}, q)_F = (\div_F(\boldsymbol{\sigma}\boldsymbol{n}), q)_F=(\boldsymbol{v}\cdot\boldsymbol{n}, q)_F\quad\forall~q\in \mathbb P_{k-1}(F),F\in\mathcal F(\mathcal T_K).
\]
Therefore, due to DoF \eqref{eq:-2formfemdof31} and the fact $\div\div\boldsymbol{\sigma}=\div\boldsymbol{v}=0$,
we acquire from the unisolvence of DoFs \eqref{BDMdof1}-\eqref{BDMdof3} for BDM element that $\boldsymbol{v}=\div\boldsymbol{\sigma} \in \div\skw\boldsymbol{V}_{k}^{d-2}(K)$.
\end{proof}

Note that $\div\skw=\curl$ for $d=2,3$.
For $k\geq1$, by finite element complexes \eqref{eq:localBDMfemderhamcomplex}-\eqref{eq:localRTfemderhamcomplex0}, we have
\begin{equation}\label{eq:20220324-4}
\dim\div\skw\boldsymbol{V}_{k}^{d-2}(K)-\dim\div\skw\mathring{\boldsymbol{V}}_{k}^{d-2}(K)={k+d-2\choose d-1}\#\mathcal F^{\partial}(\mathcal T_K)-1.
\end{equation}
% \begin{align*}  
% \dim\div\skw\boldsymbol{V}_{k}^{d-2}(K)={k+d-2\choose d-1}\#\mathcal F(\mathcal T_K)+ \left(d{k+d-2\choose d}-{k+d-1\choose d}\right)\#\mathcal T_K, \\
% \dim\div\skw\mathring{\boldsymbol{V}}_{k}^{d-2}(K)={k+d-2\choose d-1}\#\mathcal F^i(\mathcal T_K)+ \left(d{k+d-2\choose d}-{k+d-1\choose d}\right)\#\mathcal T_K+1,
% \end{align*}

\subsection{$H(\div)$-conforming macro finite element}
For each polygon $K\in \mathcal T_h$, 
define the shape function space
\[
\boldsymbol{V}_{k-1}^{\rm div}(K):=\{\boldsymbol{\phi}\in\boldsymbol{V}_{k-1}^{\mathrm{BDM}}(K): \div\boldsymbol{\phi}\in\mathbb P_{k-2}(K)\},
\]
for $k\geq 2$, and 
\[
\boldsymbol{V}_{0}^{\rm div}(K):=\{\boldsymbol{\phi}\in\boldsymbol{V}_{0}^{\mathrm{RT}}(K): \div\boldsymbol{\phi}\in\mathbb P_{0}(K)\}.
\]
Apparently $\mathbb P_{k-1}(K;\mathbb R^d)\subseteq\boldsymbol{V}_{k-1}^{\rm div}(K)$, $\boldsymbol{V}_{0}^{\rm div}(K)\cap\ker(\div)=\boldsymbol{V}_0^{\mathrm{RT}}(K)\cap\ker(\div)$, and $\boldsymbol{V}_{k-1}^{\rm div}(K)\cap\ker(\div)=\boldsymbol{V}_{k-1}^{\mathrm{BDM}}(K)\cap\ker(\div)$ for $k\geq2$.

In the following lemma we present a direct sum decomposition of space $\boldsymbol{V}_{k-1}^{\rm div}(K)$.

\begin{lemma}
For $k\geq1$,
it holds
\begin{equation}\label{eq:Vdivlocaldecomp}
\boldsymbol{V}_{k-1}^{\rm div}(K)=\div\skw \boldsymbol{V}_{k}^{d-2}(K) \oplus (\boldsymbol{x}-\boldsymbol{x}_K)\mathbb P_{\max\{k-2,0\}}(K).
\end{equation}
Then the complex
\begin{equation*}
\boldsymbol{V}_{k}^{d-2}(K)\xrightarrow{\div\skw}\boldsymbol{V}_{k-1}^{\rm div}(K)\xrightarrow{\div} \mathbb P_{\max\{k-2,0\}}(K)\to0    
\end{equation*}
is exact.
\end{lemma}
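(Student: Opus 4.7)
The plan is to establish the direct-sum decomposition \eqref{eq:Vdivlocaldecomp} first and then read off the exactness of the target complex as an immediate corollary. The three tools I will use are the already-verified local finite element de~Rham complexes \eqref{eq:localBDMfemderhamcomplex}--\eqref{eq:localRTfemderhamcomplex}, the identity $\div\div\boldsymbol{\tau}=0$ for every antisymmetric $\boldsymbol{\tau}$, and the elementary fact that $p\mapsto\div((\boldsymbol{x}-\boldsymbol{x}_K)p)$ is a bijection on the relevant polynomial space.

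That last fact follows from the calculation
\[
\div((\boldsymbol{x}-\boldsymbol{x}_K)p)=dp+(\boldsymbol{x}-\boldsymbol{x}_K)\cdot\nabla p,
\]
so when $p$ is expanded in the monomial basis $\{(\boldsymbol{x}-\boldsymbol{x}_K)^{\alpha}\}$ each monomial is an eigenvector with positive eigenvalue $d+|\alpha|$; consequently
$\div\colon(\boldsymbol{x}-\boldsymbol{x}_K)\mathbb P_{\max\{k-2,0\}}(K)\to\mathbb P_{\max\{k-2,0\}}(K)$
is bijective. A quick check also confirms the inclusion $(\boldsymbol{x}-\boldsymbol{x}_K)\mathbb P_{\max\{k-2,0\}}(K)\subseteq\boldsymbol{V}_{k-1}^{\rm div}(K)$: on each simplex $T\in\mathcal T_K$, $(\boldsymbol{x}-\boldsymbol{x}_K)p$ lies in $\mathbb P_{k-1}(T;\mathbb R^d)$ for $k\geq 2$ and in $\mathbb P_0(T;\mathbb R^d)+\boldsymbol{x}\mathbb P_0(T)$ for $k=1$; its normal trace is a single global polynomial, so $H(\div)$-conformity is automatic; and its divergence was just shown to lie in $\mathbb P_{\max\{k-2,0\}}(K)$.

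Directness of the sum is then immediate: since $\div\div\skw\boldsymbol{\tau}=0$, the first summand is contained in $\ker(\div)$, whereas the bijection above forces the second summand to meet $\ker(\div)$ only at $\boldsymbol 0$. To show the sum exhausts $\boldsymbol{V}_{k-1}^{\rm div}(K)$, given $\boldsymbol{\phi}\in\boldsymbol{V}_{k-1}^{\rm div}(K)$ I use bijectivity to choose the unique $p\in\mathbb P_{\max\{k-2,0\}}(K)$ with $\div((\boldsymbol{x}-\boldsymbol{x}_K)p)=\div\boldsymbol{\phi}$; the residual $\boldsymbol{\phi}-(\boldsymbol{x}-\boldsymbol{x}_K)p$ is then divergence-free and lies in $\boldsymbol{V}_{k-1}^{\mathrm{BDM}}(K)$ (respectively in $\boldsymbol{V}^{\mathrm{RT}}(K)$ when $k=1$), so by \eqref{eq:localBDMfemderhamcomplex} (respectively \eqref{eq:localRTfemderhamcomplex}) it belongs to $\div\skw\boldsymbol{V}_{k}^{d-2}(K)$.

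Exactness of the target complex then drops out of \eqref{eq:Vdivlocaldecomp} at no extra cost: $\div\circ(\div\skw)=0$ is built in from antisymmetry, surjectivity of $\div$ onto $\mathbb P_{\max\{k-2,0\}}(K)$ is supplied by the second summand, and $\ker(\div)\cap\boldsymbol{V}_{k-1}^{\rm div}(K)=\div\skw\boldsymbol{V}_{k}^{d-2}(K)$ is exactly the directness computation. I do not foresee any serious obstacle; the only point demanding a little bookkeeping is unifying the BDM case $k\geq 2$ with the RT case $k=1$ through the exponent $\max\{k-2,0\}$ and the two variants of the input complex.
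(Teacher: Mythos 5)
Your proof is correct and follows essentially the same route as the paper: directness comes from the bijectivity of $\div$ on $(\boldsymbol{x}-\boldsymbol{x}_K)\mathbb P_{\max\{k-2,0\}}(K)$ together with $\div\div\skw=0$, and the exhaustion step subtracts $(\boldsymbol{x}-\boldsymbol{x}_K)q$ matching the divergence and invokes the exactness of the BDM/RT complexes \eqref{eq:localBDMfemderhamcomplex} and \eqref{eq:localRTfemderhamcomplex}. The only differences are cosmetic: you prove the bijectivity by the Euler-operator eigenvalue argument instead of citing \cite[Lemma 3.1]{ChenHuang2021divdiv}, and you write out the $k=1$ case which the paper dismisses as similar.
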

\begin{proof}
We only prove the case $k\geq2$, as the proof for case $k=1$ is similar.
Since $\div:(\boldsymbol{x}-\boldsymbol{x}_K)\mathbb P_{k-2}(K)\to\mathbb P_{k-2}(K)$ is bijective \cite[Lemma 3.1]{ChenHuang2021divdiv}, we have $\div\skw \boldsymbol{V}_{k}^{d-2}(K)\cap(\boldsymbol{x}-\boldsymbol{x}_K)\mathbb P_{k-2}(K)=\{\boldsymbol{0}\}$. Clearly $\div\skw \boldsymbol{V}_{k}^{d-2}(K) \oplus(\boldsymbol{x}-\boldsymbol{x}_K)\mathbb P_{k-2}(K)\subseteq \boldsymbol{V}_{k-1}^{\rm div}(K)$.

On the other side, for $\boldsymbol{\phi}\in \boldsymbol{V}_{k-1}^{\rm div}(K)$, by $\div\boldsymbol{\phi}\in \mathbb P_{k-2}(K)$, there exists a $q\in\mathbb P_{k-2}(K)$ such that $\div((\boldsymbol{x}-\boldsymbol{x}_K)q)=\div\boldsymbol{\phi}$, i.e. $\boldsymbol{\phi}-(\boldsymbol{x}-\boldsymbol{x}_K)q\in\boldsymbol{V}_{k-1}^{\rm div}(K)\cap\ker(\div)=\boldsymbol{V}_{k-1}^{\mathrm{BDM}}(K)\cap\ker(\div)$. Thanks to finite element complex \eqref{eq:localBDMfemderhamcomplex}, $\boldsymbol{\phi}-(\boldsymbol{x}-\boldsymbol{x}_K)q\in\div\skw \boldsymbol{V}_{k}^{d-2}(K)$. Thus \eqref{eq:Vdivlocaldecomp} follows.
\end{proof}

Based on the space decomposition \eqref{eq:Vdivlocaldecomp} and the degrees of freedom of BDM element, we propose the following DoFs for space $\boldsymbol{V}_{k-1}^{\rm div}(K)$
\begin{align}
    (\boldsymbol{\phi}\cdot\boldsymbol{n}, q)_F & \quad\forall~q\in\mathbb P_{k-1}(F) \textrm{ on each }  F\in\mathcal F^{\partial}(\mathcal T_K), \label{Vdivdof1}\\
    (\div\boldsymbol{\phi}, q)_K & \quad\forall~q\in\mathbb P_{\max\{k-2,0\}}(K)/\mathbb R, \label{Vdivdof2} \\
    (\boldsymbol{\phi}, \boldsymbol{q})_K & \quad\forall~\boldsymbol{q}\in \div\skw\mathring{\boldsymbol{V}}_{k}^{d-2}(K)=\div\mathring{\boldsymbol{V}}_{k}^{d-2}(K). \label{Vdivdof3}
\end{align}
% where $\mathring{\boldsymbol{V}}_{k-1}^{\rm div}(K):=\boldsymbol{V}_{k-1}^{\rm div}(K)\cap\boldsymbol{H}_0(\div, K)$.
% where $\mathcal E^{\partial}(\mathcal T_K):=\{e\in\mathcal E(\mathcal T_K): e\subset\partial K\}$.
% DoF \eqref{Vdivdof3} will disappear when $\mathcal T_K=\{K\}$ is a simplex.
\begin{lemma}
The set of DoFs \eqref{Vdivdof1}-\eqref{Vdivdof3} is uni-solvent for space $\boldsymbol{V}_{k-1}^{\rm div}(K)$.
\end{lemma}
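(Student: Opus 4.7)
The plan is the standard two-step unisolvence argument: first verify that the number of DoFs matches $\dim \boldsymbol{V}_{k-1}^{\rm div}(K)$, then show that vanishing of all DoFs forces $\boldsymbol{\phi}=\boldsymbol{0}$. The direct decomposition \eqref{eq:Vdivlocaldecomp} and the exactness results for the complexes \eqref{eq:localBDMfemderhamcomplex}--\eqref{eq:localRTfemderhamcomplex0} do all the heavy lifting.

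For the dimension count, I would use \eqref{eq:Vdivlocaldecomp} to write
\[
\dim \boldsymbol{V}_{k-1}^{\rm div}(K) = \dim\div\skw\boldsymbol{V}_{k}^{d-2}(K) + \dim \mathbb P_{\max\{k-2,0\}}(K).
\]
The number of DoFs \eqref{Vdivdof1} equals $\binom{k+d-2}{d-1}\#\mathcal F^{\partial}(\mathcal T_K)$, and the numbers of DoFs \eqref{Vdivdof2} and \eqref{Vdivdof3} are $\dim \mathbb P_{\max\{k-2,0\}}(K)-1$ and $\dim\div\skw\mathring{\boldsymbol{V}}_{k}^{d-2}(K)$ respectively. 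Adding these three contributions and invoking \eqref{eq:20220324-4} collapses the boundary and interior $\div\skw$ pieces into $\dim\div\skw\boldsymbol{V}_{k}^{d-2}(K)$, matching $\dim \boldsymbol{V}_{k-1}^{\rm div}(K)$ exactly.

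For the vanishing step, assume all DoFs \eqref{Vdivdof1}--\eqref{Vdivdof3} vanish. From \eqref{Vdivdof1}, $\boldsymbol{\phi}\cdot\boldsymbol{n}=0$ on every $F\in\mathcal F^{\partial}(\mathcal T_K)$, so $\boldsymbol{\phi}\in\boldsymbol{H}_0(\div,K)$. Integration by parts gives $(\div\boldsymbol{\phi},1)_K=0$, and combined with \eqref{Vdivdof2} this yields $\div\boldsymbol{\phi}=0$. Hence $\boldsymbol{\phi}$ lies in $\mathring{\boldsymbol{V}}_{k-1}^{\mathrm{BDM}}(K)\cap\ker(\div)$ (or $\mathring{\boldsymbol{V}}^{\mathrm{RT}}(K)\cap\ker(\div)$ when $k=1$), and by exactness of \eqref{eq:localBDMfemderhamcomplex0} (resp.\ \eqref{eq:localRTfemderhamcomplex0}) we conclude $\boldsymbol{\phi}\in\div\skw\mathring{\boldsymbol{V}}_{k}^{d-2}(K)$. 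Now $\boldsymbol{\phi}$ itself is an admissible test function in \eqref{Vdivdof3}; taking $\boldsymbol{q}=\boldsymbol{\phi}$ yields $\|\boldsymbol{\phi}\|_{0,K}^2=0$, hence $\boldsymbol{\phi}=\boldsymbol{0}$.

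The only mildly delicate part is the dimension bookkeeping, since one has to line up the three different DoF counts with the dimension of the shape space via \eqref{eq:20220324-4}; beyond that the argument is essentially mechanical once the subcomplex \eqref{eq:localBDMfemderhamcomplex0} is available. No separate treatment of $d=2$ versus $d=3$ is needed, since the complexes and decompositions used hold in arbitrary dimension.
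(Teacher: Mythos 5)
Your proposal is correct and follows essentially the same route as the paper: the dimension count via the decomposition \eqref{eq:Vdivlocaldecomp} together with \eqref{eq:20220324-4}, then vanishing of \eqref{Vdivdof1}--\eqref{Vdivdof2} giving $\boldsymbol{\phi}\in\boldsymbol{H}_0(\div,K)$ with $\div\boldsymbol{\phi}=0$, exactness of \eqref{eq:localBDMfemderhamcomplex0}/\eqref{eq:localRTfemderhamcomplex0} placing $\boldsymbol{\phi}$ in $\div\skw\mathring{\boldsymbol{V}}_{k}^{d-2}(K)$, and \eqref{Vdivdof3} finishing the argument. Your explicit choice $\boldsymbol{q}=\boldsymbol{\phi}$ in the last step just spells out what the paper leaves implicit.
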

\begin{proof}
By \eqref{eq:20220324-4} and \eqref{eq:Vdivlocaldecomp},
the number of DoFs \eqref{Vdivdof1}-\eqref{Vdivdof3} is
\begin{align*}
&\quad {k+d-2\choose d-1}\#\mathcal F^{\partial}(\mathcal T_K)+\dim\mathbb P_{\max\{k-2,0\}}(K)-1+\dim\div\skw\mathring{\boldsymbol{V}}_{k}^{d-2}(K) \\
&=\dim\div\skw\boldsymbol{V}_{k}^{d-2}(K)+\dim\mathbb P_{\max\{k-2,0\}}(K)=\dim\boldsymbol{V}_{k-1}^{\rm div}(K).
\end{align*}
% By \eqref{eq:Vdivlocaldecomp},
% the number of DoFs \eqref{Vdivdof1}-\eqref{Vdivdof3} equals to $\dim\boldsymbol{V}_{k-1}^{\rm div}(K)$.

Assume $\boldsymbol{\phi}\in\boldsymbol{V}_{k-1}^{\rm div}(K)$ and all the DoFs \eqref{Vdivdof1}-\eqref{Vdivdof3} vanish. By the vanishing DoF~\eqref{Vdivdof1}, $\boldsymbol{\phi}\in \boldsymbol{H}_0(\div, K)$ and $\div\boldsymbol{\phi}\in L_0^2(K)$. Then it follows from the vanishing DoF~\eqref{Vdivdof2} that $\div\boldsymbol{\phi}=0$. 
Thanks to the exactness of complexes \eqref{eq:localBDMfemderhamcomplex0}-\eqref{eq:localRTfemderhamcomplex0}, $\boldsymbol{\phi}\in\div\skw\mathring{\boldsymbol{V}}_{k}^{d-2}(K)$. Therefore $\boldsymbol{\phi}=\boldsymbol{0}$ holds from the vanishing DoF~\eqref{Vdivdof3}.
\end{proof}

\begin{remark}\rm
When $K$ is a simplex and $\mathcal T_K=\{K\}$, thanks to DoF \eqref{BDMdof3} for the BDM element, DoF \eqref{Vdivdof3} can be replaced by 
\[
(\boldsymbol{\phi}, \boldsymbol{q})_K \quad\forall~\boldsymbol{q}\in \mathbb P_{k-3}(K;\mathbb K)\boldsymbol{x}
\]
for $k\geq3$.
And DoF \eqref{Vdivdof3} disappears for $k=1$ and $k=2$.
\end{remark}

Next we consider the norm equivalence of space $\boldsymbol{V}_{k-1}^{\rm div}(K)$.

\begin{lemma}\label{lem:Vkm1divnormequivalence}
For $\boldsymbol{\phi}\in\boldsymbol{V}_{k-1}^{\rm div}(K)$, it holds the norm equivalence
\begin{equation}\label{eq:Vkm1divnormequivalence}
\|\boldsymbol{\phi}\|_{0,K}\eqsim h_K\|\div\boldsymbol{\phi}\|_{0,K} + \sup_{\boldsymbol{\psi}\in\div\mathring{\boldsymbol{V}}_{k}^{d-2}(K)}\frac{(\boldsymbol{\phi}, \boldsymbol{\psi})_K}{\|\boldsymbol{\psi}\|_{0,K}} +\sum_{F\in\mathcal F^{\partial}(\mathcal T_K)}h_F^{1/2}\|\boldsymbol{\phi}\cdot\boldsymbol{n}\|_{0,F}.
\end{equation}
\end{lemma}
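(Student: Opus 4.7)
The plan is to prove the two directions of the equivalence separately. The direction RHS $\lesssim \|\boldsymbol{\phi}\|_{0,K}$ is routine: on each simplex $T\in\mathcal T_K$, a polynomial inverse inequality yields $\|\div\boldsymbol{\phi}\|_{0,T}\lesssim h_T^{-1}\|\boldsymbol{\phi}\|_{0,T}$ (hence $h_K\|\div\boldsymbol{\phi}\|_{0,K}\lesssim\|\boldsymbol{\phi}\|_{0,K}$ after using the quasi-uniformity $h_T\eqsim h_K$); a scaled polynomial trace inequality $h_F^{1/2}\|\boldsymbol{\phi}\cdot\boldsymbol{n}\|_{0,F}\lesssim\|\boldsymbol{\phi}\|_{0,T}$ controls the boundary term upon summing over $F\in\mathcal F^{\partial}(\mathcal T_K)$; and the supremum term is dominated by $\|\boldsymbol{\phi}\|_{0,K}$ via Cauchy--Schwarz.

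For the substantive direction $\|\boldsymbol{\phi}\|_{0,K}\lesssim$ RHS, I would use the direct decomposition \eqref{eq:Vdivlocaldecomp} to write $\boldsymbol{\phi}=\boldsymbol{\phi}_1+\boldsymbol{\phi}_2$ with $\boldsymbol{\phi}_2=(\boldsymbol{x}-\boldsymbol{x}_K)q$, $q\in\mathbb P_{\max\{k-2,0\}}(K)$, and $\boldsymbol{\phi}_1\in\div\skw\boldsymbol{V}_k^{d-2}(K)$ divergence-free. The bijectivity of $\div:(\boldsymbol{x}-\boldsymbol{x}_K)\mathbb P_{\max\{k-2,0\}}(K)\to\mathbb P_{\max\{k-2,0\}}(K)$ from \cite[Lemma~3.1]{ChenHuang2021divdiv}, combined with scaling to a unit-diameter reference configuration, gives $\|\boldsymbol{\phi}_2\|_{0,K}\lesssim h_K\|q\|_{0,K}\lesssim h_K\|\div\boldsymbol{\phi}\|_{0,K}$. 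To estimate $\boldsymbol{\phi}_1$, I would observe that on the divergence-free subspace $W:=\div\skw\boldsymbol{V}_k^{d-2}(K)$ the functional
\[
N(\boldsymbol{v}):=\sum_{F\in\mathcal F^{\partial}(\mathcal T_K)}h_F^{1/2}\|\boldsymbol{v}\cdot\boldsymbol{n}\|_{0,F}+\sup_{\boldsymbol{\psi}\in\div\mathring{\boldsymbol{V}}_k^{d-2}(K)}\frac{(\boldsymbol{v},\boldsymbol{\psi})_K}{\|\boldsymbol{\psi}\|_{0,K}}
\]
is a norm: if $N(\boldsymbol{v})=0$, then $\boldsymbol{v}\cdot\boldsymbol{n}|_{\partial K}=0$ puts $\boldsymbol{v}$ into $W\cap\boldsymbol{H}_0(\div,K)=\div\skw\mathring{\boldsymbol{V}}_k^{d-2}(K)$ by the exactness of \eqref{eq:localBDMfemderhamcomplex0} (or \eqref{eq:localRTfemderhamcomplex0} when $k=1$), and taking $\boldsymbol{\psi}=\boldsymbol{v}$ in the vanishing supremum forces $\boldsymbol{v}=0$. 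On the finite-dimensional space $W$, norm equivalence then yields $\|\boldsymbol{\phi}_1\|_{0,K}\lesssim N(\boldsymbol{\phi}_1)$. Writing $\boldsymbol{\phi}_1=\boldsymbol{\phi}-\boldsymbol{\phi}_2$ and absorbing the contributions of $\boldsymbol{\phi}_2$ to both terms of $N(\boldsymbol{\phi}_1)$ via the bound on $\|\boldsymbol{\phi}_2\|_{0,K}$ (trace inequality for the boundary part, Cauchy--Schwarz for the supremum part) closes the estimate.

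The main obstacle is making the norm-equivalence constant on $W$ uniform in $K$. Rescaling by $\boldsymbol{x}\mapsto(\boldsymbol{x}-\boldsymbol{x}_K)/h_K$ sends $K$ to $\hat K$ with $h_{\hat K}=1$, and every term of \eqref{eq:Vkm1divnormequivalence} scales by the same power $h_K^{d/2}$, so it suffices to prove the bound on $\hat K$ with constants depending only on the chunkiness of $\hat K$ and the shape-regularity of $\hat\mathcal T_K$ guaranteed by assumptions (A1)--(A2). This uniformity can be secured either by a compactness/contradiction argument on the precompact family of admissible pairs $(\hat K,\hat\mathcal T_K)$ — a blow-up sequence would produce a limit configuration on which the abstract norm property of $N$ must still hold, yielding the contradiction — or more constructively by exhibiting a dual basis for the DoFs \eqref{Vdivdof1}--\eqref{Vdivdof3} whose $L^2$ norms are bounded in terms of these shape parameters and the polynomial degree $k$.
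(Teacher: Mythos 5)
Your upper bound and the qualitative structure of the lower bound are fine, but the quantitative core of the hard direction is not actually proved, and the two routes you offer for it do not work as stated. After splitting off $\boldsymbol{\phi}_2=(\boldsymbol{x}-\boldsymbol{x}_K)q$, everything hinges on the claim $\|\boldsymbol{v}\|_{0,K}\lesssim N(\boldsymbol{v})$ on $W=\div\skw\boldsymbol{V}_{k}^{d-2}(K)$ with a constant uniform over all admissible $(K,\mathcal T_K)$. Verifying that $N$ is a norm on $W$ (via exactness of \eqref{eq:localBDMfemderhamcomplex0} and taking $\boldsymbol{\psi}=\boldsymbol{v}$) only gives an equivalence constant depending on the particular space $W$, whose dimension is \emph{not} uniformly bounded: under (A1)--(A2) the simplices of $\mathcal T_K$ may be much smaller than $h_K$, so $\#\mathcal T_K$, and with it $\dim W$ and the number of interior DoFs \eqref{Vdivdof3}, can be arbitrarily large. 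Consequently the family of rescaled pairs $(\hat K,\hat{\mathcal T}_K)$ is not precompact in any fixed finite-dimensional setting, and the blow-up/contradiction argument you sketch has no limit configuration to pass to; likewise, ``exhibit a dual basis with bounded $L^2$ norms'' is a restatement of the needed uniform estimate rather than a proof, with no mechanism offered for the growing set of DoFs \eqref{Vdivdof3}. A secondary, smaller gap: the bound $\|q\|_{0,K}\lesssim\|\div\boldsymbol{\phi}\|_{0,K}$ does not follow from bijectivity plus scaling alone; it needs a quantitative argument (e.g., $(\div((\boldsymbol{x}-\boldsymbol{x}_K)q),q)_K\geq\tfrac d2\|q\|_{0,K}^2$ using star-shapedness of $K$ with respect to $\boldsymbol{x}_K$), which you do not supply.

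The paper avoids the compactness issue entirely by a constructive argument whose constants are controlled simplex by simplex: it first builds $\boldsymbol{\phi}_1\in\boldsymbol{V}_{k-1}^{\mathrm{BDM}}(K)$ matching $\boldsymbol{\phi}\cdot\boldsymbol{n}$ on $\partial K$ with vanishing interior DoFs (so $\|\boldsymbol{\phi}_1\|_{0,K}$ is controlled by the boundary term via local scaling), then corrects the divergence by solving an auxiliary Neumann problem and applying the $L^2$-bounded commuting projection $I_K^{\div}$ of Falk--Winther/Arnold--Guzm\'an to obtain $\boldsymbol{\phi}_2\in\mathring{\boldsymbol{V}}_{k-1}^{\mathrm{BDM}}(K)$ with $\div\boldsymbol{\phi}_2=\div(\boldsymbol{\phi}-\boldsymbol{\phi}_1)$, and finally uses exactness of \eqref{eq:localBDMfemderhamcomplex0} to place the remainder $\boldsymbol{\phi}-\boldsymbol{\phi}_1-\boldsymbol{\phi}_2$ in $\div\skw\mathring{\boldsymbol{V}}_{k}^{d-2}(K)$, where the supremum term controls it directly. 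If you want to salvage your route, you would need to replace the abstract finite-dimensional equivalence by some such construction (or by a commuting-projection argument) that yields constants depending only on the chunkiness of $K$ and the shape regularity of $\mathcal T_K$, not on $\#\mathcal T_K$.
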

\begin{proof}
By the inverse inequality \cite{Ciarlet1978,Verfuerth2013} (see also \cite[Lemma 10]{Huang2020}),
$$
h_K\|\div\boldsymbol{\phi}\|_{0,K}\lesssim  \|\div\boldsymbol{\phi}\|_{-1,K},
$$ 
where
$$
\|\div\boldsymbol{\phi}\|_{-1,K}=\sup_{v\in H_0^1(K)}\frac{(\div\boldsymbol{\phi}, v)_K}{|v|_{1,K}}=-\sup_{v\in H_0^1(K)}\frac{(\boldsymbol{\phi}, \nabla v)_K}{|v|_{1,K}} \leq \|\boldsymbol{\phi}\|_{0,K}.
$$
Then we have
\begin{equation}\label{eq:20230201}  
h_K\|\div\boldsymbol{\phi}\|_{0,K}\lesssim \|\boldsymbol{\phi}\|_{0,K}.
\end{equation}
For $F\in\mathcal F^{\partial}(\mathcal T_K)$, there exists a simplex $T\in\mathcal T_K$ satisfying $F\subset\partial T$, then apply the trace inequality \cite[Theorem 1.5.1.10]{Grisvard1985} (see also \cite[(2.18)]{BrennerSung2018}) and the inverse inequality to get
$$
h_F^{1/2}\|\boldsymbol{\phi}\cdot\boldsymbol{n}\|_{0,F} \lesssim \|\boldsymbol{\phi}\|_{0,T}+h_T|\boldsymbol{\phi}|_{1,T}\lesssim \|\boldsymbol{\phi}\|_{0,T}.
$$ 
This means
$$
\sum_{F\in\mathcal F^{\partial}(\mathcal T_K)}h_F^{1/2}\|\boldsymbol{\phi}\cdot\boldsymbol{n}\|_{0,F} \lesssim \sum_{F\in\mathcal F^{\partial}(\mathcal T_K)}\|\boldsymbol{\phi}\|_{0,T}\lesssim \|\boldsymbol{\phi}\|_{0,K}.
$$ 
Combining \eqref{eq:20230201}, the Cauchy-Schwarz inequality and the last inequality yields
$$
h_K\|\div\boldsymbol{\phi}\|_{0,K}+\sup_{\boldsymbol{\psi}\in\div\mathring{\boldsymbol{V}}_{k}^{d-2}(K)}\frac{(\boldsymbol{\phi}, \boldsymbol{\psi})_K}{\|\boldsymbol{\psi}\|_{0,K}} +\sum_{F\in\mathcal F^{\partial}(\mathcal T_K)}h_F^{1/2}\|\boldsymbol{\phi}\cdot\boldsymbol{n}\|_{0,F}\lesssim \|\boldsymbol{\phi}\|_{0,K}.
$$
% \begin{align*}
% &\quad h_K\|\div\boldsymbol{\phi}\|_{0,K}+\sup_{\boldsymbol{\psi}\in\div\mathring{\boldsymbol{V}}_{k}^{d-2}(K)}\frac{(\boldsymbol{\phi}, \boldsymbol{\psi})_K}{\|\boldsymbol{\psi}\|_{0,K}} +\sum_{F\in\mathcal F^{\partial}(\mathcal T_K)}h_F^{1/2}\|\boldsymbol{\phi}\cdot\boldsymbol{n}\|_{0,F}\\
% &\lesssim \|\div\boldsymbol{\phi}\|_{-1,K} + \|\boldsymbol{\phi}\|_{0,K} \lesssim \|\boldsymbol{\phi}\|_{0,K}.    
% \end{align*}

Next we focus on the proof of the lower bound. Again we only prove the case $k\geq2$, whose argument can be applied to case $k=1$. Take $\boldsymbol{\phi}_1\in\boldsymbol{V}_{k-1}^{\mathrm{BDM}}(K)$ such that $(\boldsymbol{\phi}_1\cdot\boldsymbol{n})|_{\partial K}=(\boldsymbol{\phi}\cdot\boldsymbol{n})|_{\partial K}$, and all the DoFs \eqref{BDMdof1}-\eqref{BDMdof3} of $\boldsymbol{\phi}_1$ interior to $K$ equal to zero. 
By the norm equivalence on each simplex $T$ and the vanishing DoFs \eqref{BDMdof2}-\eqref{BDMdof3}, we get
\begin{align}\label{eq:20220324-5}
\|\boldsymbol{\phi}_1\|_{0,K}^2 = \sum_{T\in\mathcal T_K}\|\boldsymbol{\phi}_1\|_{0,T}^2 \eqsim \sum_{T\in\mathcal T_K}\sum_{F\in\mathcal F(T)}h_F\|\boldsymbol{\phi}_1\cdot\boldsymbol{n}\|_{0,F}^2 =\sum_{F\in\mathcal F^{\partial}(\mathcal T_K)}h_F\|\boldsymbol{\phi}\cdot\boldsymbol{n}\|_{0,F}^2.
\end{align}
Due to the vanishing DoF \eqref{BDMdof2}, it holds that $\div\boldsymbol{\phi}_1=Q_0^T(\div\boldsymbol{\phi}_1)$ for $T\in\mathcal T_K$.
% We have
% \begin{equation}
% \|\boldsymbol{\phi}_1\|_{0,K}\eqsim \sum_{F\in\mathcal F^{\partial}(\mathcal T_K)}h_F^{1/2}\|\boldsymbol{\phi}\cdot\boldsymbol{n}\|_{0,F},
% \end{equation}
Then apply the integration by parts and the Cauchy-Schwarz inequality to acquire
\begin{equation}\label{eq:202302011}
\|\div\boldsymbol{\phi}_1\|_{0,T}^2=\|Q_0^T(\div\boldsymbol{\phi}_1)\|_{0,T}^2
\leq\frac{1}{|T|}\sum_{F\in\mathcal F(T)\cap\mathcal F^{\partial}(\mathcal T_K)}|F|\|\boldsymbol{\phi}\cdot\boldsymbol{n}\|_{0,F}^2\;\;\forall~T\in\mathcal T_K.
\end{equation}
% for $T\in\mathcal T_K$.
Now let $w\in H^1(K)\cap L_0^2(K)$ be the solution of 
\[
\begin{cases}
-\Delta w= \div(\boldsymbol{\phi}-\boldsymbol{\phi}_1)\quad\textrm{ in } K, \\
\;\;\partial_nw=0\qquad\qquad\quad\;\;\,\textrm{ on } \partial K.
\end{cases}
\]
The weak formulation is
\[
(\nabla w, \nabla v)_K=(\div(\boldsymbol{\phi}-\boldsymbol{\phi}_1), v)_K\quad\forall~v\in H^1(K)\cap L_0^2(K).
\]
Obviously we obtain from \eqref{eq:202302011} that
\begin{equation}\label{eq:202302031}  
\|\nabla w\|_{0,K}\lesssim h_K\|\div(\boldsymbol{\phi}-\boldsymbol{\phi}_1)\|_{0,K}\lesssim h_K\|\div\boldsymbol{\phi}\|_{0,K} +\sum_{F\in\mathcal F^{\partial}(\mathcal T_K)}h_F^{1/2}\|\boldsymbol{\phi}\cdot\boldsymbol{n}\|_{0,F}.
\end{equation}
Let $I_K^{\div}: \boldsymbol{H}_0(\div,K)\to\mathring{\boldsymbol{V}}_{k-1}^{\mathrm{BDM}}(K)$ be the local $L^2$-bounded commuting projection operator in \cite{ArnoldGuzman2021,FalkWinther2014}, then
\begin{equation}\label{eq:20230203}  
\|I_K^{\div}\boldsymbol{\psi}\|_{0,K} \lesssim \|\boldsymbol{\psi}\|_{0,K}\quad\forall~\boldsymbol{\psi}\in \boldsymbol{H}_0(\div,K),
\end{equation}
\[
\div(I_K^{\div}\boldsymbol{\psi})=\div\boldsymbol{\psi} \quad \textrm{ for } \boldsymbol{\psi}\in \boldsymbol{H}_0(\div,K) \textrm{ satisfying } \div\boldsymbol{\psi}\in V_{k-2}^{L^2}(K).
\]
Recall $
V_{k-2}^{L^2}(K)=\{v\in L^2(K): v|_{T}\in \mathbb P_{k-2}(T) \textrm{ for } T\in\mathcal T_K\}
$. Set $\boldsymbol{\phi}_2=-I_K^{\div}(\nabla w)\in\mathring{\boldsymbol{V}}_{k-1}^{\mathrm{BDM}}(K)$. We have
\begin{equation}\label{eq:20220324-6}
\div\boldsymbol{\phi}_2=-\div(I_K^{\div}(\nabla w))=-\Delta w=\div(\boldsymbol{\phi}-\boldsymbol{\phi}_1).
\end{equation}
It follows from \eqref{eq:20230203} and \eqref{eq:202302031} that
\begin{equation}\label{eq:20220324-7}
\|\boldsymbol{\phi}_2\|_{0,K}=\|I_K^{\div}(\nabla w)\|_{0,K}\lesssim \|\nabla w\|_{0,K}\lesssim h_K\|\div\boldsymbol{\phi}\|_{0,K} +\sum_{F\in\mathcal F^{\partial}(\mathcal T_K)}h_F^{1/2}\|\boldsymbol{\phi}\cdot\boldsymbol{n}\|_{0,F}.
\end{equation}
By \eqref{eq:20220324-6}, $\boldsymbol{\phi}-\boldsymbol{\phi}_1-\boldsymbol{\phi}_2\in\mathring{\boldsymbol{V}}_{k-1}^{\mathrm{BDM}}(K)\cap\ker(\div)$, which together the exactness of complex \eqref{eq:localBDMfemderhamcomplex0} indicates $\boldsymbol{\phi}-\boldsymbol{\phi}_1-\boldsymbol{\phi}_2\in\div\mathring{\boldsymbol{V}}_{k}^{d-2}(K)$. Hence
\begin{align*}  
\|\boldsymbol{\phi}\|_{0,K}&\lesssim \|\boldsymbol{\phi}_1\|_{0,K}+\|\boldsymbol{\phi}_2\|_{0,K}+\|\boldsymbol{\phi}-\boldsymbol{\phi}_1-\boldsymbol{\phi}_2\|_{0,K} \\
&\lesssim \|\boldsymbol{\phi}_1\|_{0,K}+\|\boldsymbol{\phi}_2\|_{0,K}+\sup_{\boldsymbol{\psi}\in\div\mathring{\boldsymbol{V}}_{k}^{d-2}(K)}\frac{(\boldsymbol{\phi}-\boldsymbol{\phi}_1-\boldsymbol{\phi}_2, \boldsymbol{\psi})_K}{\|\boldsymbol{\psi}\|_{0,K}} \\
&\lesssim \|\boldsymbol{\phi}_1\|_{0,K}+\|\boldsymbol{\phi}_2\|_{0,K}+\sup_{\boldsymbol{\psi}\in\div\mathring{\boldsymbol{V}}_{k}^{d-2}(K)}\frac{(\boldsymbol{\phi}, \boldsymbol{\psi})_K}{\|\boldsymbol{\psi}\|_{0,K}}.
\end{align*}
Finally, \eqref{eq:Vkm1divnormequivalence} holds from \eqref{eq:20220324-5} and \eqref{eq:20220324-7}.
\end{proof}

Let 
\[
\mathbb{V}_{k-1}^{\rm div}(K):=\{\boldsymbol{\phi}\in\boldsymbol{V}_{k-1}^{\rm div}(K): (\boldsymbol{\phi}\cdot\boldsymbol{n})|_F\in\mathbb P_{k-1}(F) \quad\forall~F\in\mathcal F(K)\}.
\]
On each face $F\in\mathcal F(K)$, $(\boldsymbol{\phi}\cdot\boldsymbol{n})|_F$ is a polynomial for $\boldsymbol{\phi}\in\mathbb{V}_{k-1}^{\rm div}(K)$ but $(\boldsymbol{\phi}\cdot\boldsymbol{n})|_F$ is a piecewise polynomial for $\boldsymbol{\phi}\in\boldsymbol{V}_{k-1}^{\rm div}(K)$.
Due to DoFs \eqref{Vdivdof1}-\eqref{Vdivdof3} for $\boldsymbol{V}_{k-1}^{\rm div}(K)$, a set of unisolvent DoFs for $\mathbb{V}_{k-1}^{\rm div}(K)$ is 
\begin{align}
    (\boldsymbol{\phi}\cdot\boldsymbol{n}, q)_F & \quad\forall~q\in\mathbb P_{k-1}(F) \textrm{ on each }  F\in\mathcal F(K), \label{Vpdivdof1}\\
    (\div\boldsymbol{\phi}, q)_K & \quad\forall~q\in\mathbb P_{\max\{k-2,0\}}(K)/\mathbb R, \label{Vpdivdof2} \\
    (\boldsymbol{\phi}, \boldsymbol{q})_K & \quad\forall~\boldsymbol{q}\in \div\mathring{\boldsymbol{V}}_{k}^{d-2}(K). \label{Vpdivdof3}
\end{align}

As an immediate result of Lemma~\ref{lem:Vkm1divnormequivalence}, we get the following norm equivalence of space $\mathbb{V}_{k-1}^{\rm div}(K)$.
\begin{corollary}
For $\boldsymbol{\phi}\in\mathbb{V}_{k-1}^{\rm div}(K)$, it holds the norm equivalence
\begin{equation}\label{eq:Vkm1divnormequiv}
\|\boldsymbol{\phi}\|_{0,K}\eqsim h_K\|\div\boldsymbol{\phi}\|_{0,K} + \sup_{\boldsymbol{\psi}\in\div\mathring{\boldsymbol{V}}_{k}^{d-2}(K)}\frac{(\boldsymbol{\phi}, \boldsymbol{\psi})_K}{\|\boldsymbol{\psi}\|_{0,K}} +\sum_{F\in\mathcal F(K)}h_F^{1/2}\|\boldsymbol{\phi}\cdot\boldsymbol{n}\|_{0,F}.
\end{equation}
\end{corollary}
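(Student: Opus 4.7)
The plan is to derive this corollary as a direct consequence of Lemma~\ref{lem:Vkm1divnormequivalence} together with a simple equivalence between the two boundary sums. Since $\mathbb{V}_{k-1}^{\rm div}(K)\subseteq\boldsymbol{V}_{k-1}^{\rm div}(K)$ by definition, Lemma~\ref{lem:Vkm1divnormequivalence} applies verbatim to any $\boldsymbol{\phi}\in\mathbb{V}_{k-1}^{\rm div}(K)$. So the entire task reduces to proving that
$$
\sum_{F\in\mathcal F(K)}h_F^{1/2}\|\boldsymbol{\phi}\cdot\boldsymbol{n}\|_{0,F} \eqsim \sum_{F'\in\mathcal F^{\partial}(\mathcal T_K)}h_{F'}^{1/2}\|\boldsymbol{\phi}\cdot\boldsymbol{n}\|_{0,F'}
$$
for such $\boldsymbol{\phi}$, and then substituting this equivalence into \eqref{eq:Vkm1divnormequivalence}.

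The crux is the defining constraint of $\mathbb{V}_{k-1}^{\rm div}(K)$: on each polytopal face $F\in\mathcal F(K)$, the normal trace $\boldsymbol{\phi}\cdot\boldsymbol{n}|_F$ is a single polynomial in $\mathbb P_{k-1}(F)$ rather than merely a piecewise polynomial. Consequently, if $\{F':F'\in\mathcal F^{\partial}(\mathcal T_K),\ F'\subset F\}$ denotes the simplicial subdivision of $F$ induced from $\mathcal T_K$, then
$$
\|\boldsymbol{\phi}\cdot\boldsymbol{n}\|_{0,F}^2=\sum_{F'\subset F}\|\boldsymbol{\phi}\cdot\boldsymbol{n}\|_{0,F'}^2.
$$
Combined with the mesh assumptions (A1)--(A2), which imply $h_{F'}\eqsim h_F\eqsim h_K$ and that the number $\#\{F'\subset F\}$ is uniformly bounded, the two sums above are equivalent by elementary applications of the Cauchy--Schwarz inequality in each direction.

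The forward direction is immediate: $h_F^{1/2}\|\boldsymbol{\phi}\cdot\boldsymbol{n}\|_{0,F}\lesssim\bigl(\sum_{F'\subset F}h_{F'}\|\boldsymbol{\phi}\cdot\boldsymbol{n}\|_{0,F'}^2\bigr)^{1/2}$, and then $\ell^1$--$\ell^2$ equivalence on a uniformly bounded index set finishes the estimate. The reverse direction follows by writing $h_{F'}^{1/2}\|\boldsymbol{\phi}\cdot\boldsymbol{n}\|_{0,F'}\lesssim h_F^{1/2}\|\boldsymbol{\phi}\cdot\boldsymbol{n}\|_{0,F'}$ and summing. Neither direction presents a genuine obstacle; the proof really is ``immediate'' in the sense that all the analytic work — the inverse inequality, the auxiliary boundary lifting $\boldsymbol{\phi}_1$, the Neumann problem for $w$, and the $L^2$-bounded commuting projection $I_K^{\div}$ — has already been executed in Lemma~\ref{lem:Vkm1divnormequivalence}. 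The only conceptual step is recognizing that restricting to $\mathbb{V}_{k-1}^{\rm div}(K)$ only affects the boundary sum and in a trivial way.
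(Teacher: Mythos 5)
Your proposal is correct and matches the paper's route: the paper simply states the corollary as an immediate consequence of Lemma~\ref{lem:Vkm1divnormequivalence}, with the implicit step being exactly your equivalence of the two boundary sums via $L^2$ additivity over the induced subdivision of each $F\in\mathcal F(K)$, $h_{F'}\eqsim h_F$, and the bounded number of subfaces. One small remark: that equivalence uses only the additivity $\|\boldsymbol{\phi}\cdot\boldsymbol{n}\|_{0,F}^2=\sum_{F'\subset F}\|\boldsymbol{\phi}\cdot\boldsymbol{n}\|_{0,F'}^2$, not the constraint $(\boldsymbol{\phi}\cdot\boldsymbol{n})|_F\in\mathbb P_{k-1}(F)$, so the ``crux'' you identify is not actually needed for this step.
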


For later use, let $Q_{K,k-1}^{\div}$ be the $L^2$-orthogonal projection operator onto $\mathbb{V}_{k-1}^{\rm div}(K)$ with respect to the inner product $(\cdot, \cdot)_K$.
Introduce the discrete spaces
\begin{align*}
\mathbb{V}_{h,k-1}^{\rm div}&:=\{\boldsymbol{\phi}_h\in \boldsymbol{L}^2(\Omega;\mathbb R^d): \boldsymbol{\phi}_h|_K\in \mathbb{V}_{k-1}^{\rm div}(K) \textrm{ for each } K\in\mathcal T_h\}, \\
\mathbb P_l(\mathcal T_h)&:=\{q_h\in L^2(\Omega): q_h|_K\in \mathbb P_l(K) \textrm{ for each } K\in\mathcal T_h\}
\end{align*}
with non-negative integer $l$.
For $\boldsymbol{\phi}\in\boldsymbol{L}^2(\Omega;\mathbb R^d)$, let $Q_{h,k-1}^{\div}\boldsymbol{\phi}\in\mathbb{V}_{h,k-1}^{\rm div}$ be determined by $(Q_{h,k-1}^{\div}\boldsymbol{\phi})|_K=Q_{K,k-1}^{\div}(\boldsymbol{\phi}|_K)$ for each $K\in\mathcal T_h$. For $v\in L^2(\Omega)$, let $Q_h^lv\in \mathbb P_l(\mathcal T_h)$ be determined by $(Q_h^{l}v)|_K=Q_{l}^K(v|_K)$ for each $K\in\mathcal T_h$. For simplicity, the vector version of $Q_h^l$ is still denoted by $Q_h^l$. And we abbreviate $Q_h^k$ as $Q_h$ if $l=k$.

\section{Nonconforming virtual element method without extrinsic stabilization}\label{sec:stabfreencfmvem}

% In this section we will develop a stabilization-free nonconforming virtual element method for the second order elliptic 
% problem \eqref{eq:ellipitc2ndproblem}.

In this section we will develop a nonconforming VEM without extrinsic stabilization for the second order elliptic problem in arbitrary dimension
\begin{equation}\label{eq:ellipitc2ndproblem}
\begin{cases}
-\Delta u + \alpha u=f & \textrm{ in } \Omega,\\
\qquad\qquad u=0&\textrm{ on } \partial\Omega,
\end{cases}
\end{equation}
where $\Omega\subseteq\mathbb R^d$ is a bounded polygon, $f\in L^2(\Omega)$ and $\alpha$ is a nonnegative constant. The weak formulation of problem \eqref{eq:ellipitc2ndproblem} is to find $u\in H_0^1(\Omega)$ such that
\begin{equation}\label{eq:ellipitc2ndproblemweakform}
a(u,v)=(f,v)\quad\forall~v\in H_0^1(\Omega),
\end{equation}
where the bilinear form $a(u, v):=(\nabla_h u, \nabla_h v)+\alpha(u,v)$ with $\nabla_h$ being the piecewise counterpart of $\nabla$ with respect to $\mathcal T_h$.

\subsection{$H^1$-nonconforming virtual element}
Several $H^1$-nonconforming virtual elements have been developed in \cite{AyusodeDiosLipnikovManzini2016,CangianiManziniSutton2017,ChenHuang2020ncvem,Huang2020}.
In this paper we adopt those in \cite{CangianiManziniSutton2017,ChenHuang2020ncvem}.
The degrees of freedom are given by
\begin{align}
\frac{1}{|F|}(v, \phi_i^F)_F, & \quad i=1,\ldots, \dim\mathbb P_{k-1}(F), F\in\mathcal F(K), \label{eq:vemdof1}\\
\frac{1}{|K|}(v, \phi_i^K)_K, & \quad i=1,\ldots, \dim\mathbb P_{k-2}(K), \label{eq:vemdof2}
\end{align}
where $\{\phi_i^F\}_{i=1}^{\dim\mathbb P_{k-1}(F)}$ is a basis of $\mathbb P_{k-1}(F)$, and $\{\phi_i^K\}_{i=1}^{\dim\mathbb P_{k-2}(K)}$ a basis of $\mathbb P_{k-2}(K)$.

To define the space of shape functions, we need a local $H^1$ projection operator $\Pi_k^K: H^1(K)\to\mathbb P_k(K)$: given $v\in H^1(K)$, let $\Pi_k^Kv\in\mathbb P_k(K)$ be the solution of the problem %\mnote{ define $Q_0^F$}
\begin{align}
(\nabla\Pi_k^Kv, \nabla q)_K&=(\nabla v, \nabla q)_K\quad  \forall~q\in \mathbb P_k(K), \label{eq:projlocal2d1}\\
\int_{\partial K}\Pi_k^Kv\,{\rm d}s&=\int_{\partial K}v\,{\rm d}s. \label{eq:projlocal2d2}
% \sum_{F\in\mathcal F(K)}Q_0^F(\Pi_k^Kv)&=\sum_{F\in\mathcal F(K)}Q_0^Fv.\label{eq:projlocal2d2}
\end{align}
It holds
\begin{equation}\label{eq:localproj2dprop1}
\Pi_k^Kq=q \quad\forall~q\in\mathbb P_k(K).    
\end{equation}

With the help of operator $\Pi_k^K$, the space of shape functions is defined as
\begin{align*}
V_k(K):=\{v\in H^1(K) &: \Delta v\in\mathbb P_{k}(K),\, \partial_nv|_F\in\mathbb P_{k-1}(F)\textrm{ for each face }F\in\mathcal F(K), \\
&\qquad\qquad\qquad\quad\;\, \textrm{and } (v-\Pi_k^Kv, q)_K=0\quad\forall~q\in \mathbb P_{k-2}^{\perp}(K)\},
\end{align*}
where %$\mathbb P_{k}(K)/\mathbb P_{k-2}(K)$ 
$\mathbb P_{k-2}^{\perp}(K)$
means the orthogonal complement space of $\mathbb P_{k-2}(K)$ in $\mathbb P_{k}(K)$ with respect to the inner product $(\cdot, \cdot)_K$.
Due to \eqref{eq:localproj2dprop1}, it holds $\mathbb P_k(K)\subseteq V_k(K)$.
DoFs~\eqref{eq:vemdof1}-\eqref{eq:vemdof2} are uni-solvent for the shape function space $V_k(K)$.

For $v\in V_k(K)$, the $H^1$ projection $\Pi_k^Kv$ is computable using DoFs \eqref{eq:vemdof1}-\eqref{eq:vemdof2}, and the $L^2$ projection
\begin{equation}\label{eq:QKPik}  
Q_k^Kv= \Pi_k^Kv + Q_{k-2}^Kv-Q_{k-2}^K\Pi_k^Kv
\end{equation}
is also computable using DoFs \eqref{eq:vemdof1}-\eqref{eq:vemdof2}. 

We will prove the inverse inequality and the norm equivalence for the virtual element space $V_k(K)$.
\begin{lemma}\label{lem:veminverse}
It holds the inverse inequality
\begin{equation}\label{eq:veminverse}
|v|_{1,K}\lesssim h_K^{-1}\|v\|_{0,K}\quad\forall~v\in V_k(K).  
\end{equation}
\end{lemma}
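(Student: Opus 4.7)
The plan is to prove the inverse inequality by reducing it, via a scaling argument, to a finite-dimensional norm equivalence on a reference polytope, which in turn follows from the uni-solvence of the DoFs \eqref{eq:vemdof1}--\eqref{eq:vemdof2}.

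First I would rescale $K$ to $\hat K := h_K^{-1}K$ of unit diameter and introduce $\hat v(\hat x) := v(h_K\hat x) \in V_k(\hat K)$. A direct change of variables gives
\[
\|\hat v\|_{0,\hat K} = h_K^{-d/2}\|v\|_{0,K},\qquad |\hat v|_{1,\hat K} = h_K^{1-d/2}|v|_{1,K},
\]
so the desired bound is equivalent to the scale-free inequality $|\hat v|_{1,\hat K} \lesssim \|\hat v\|_{0,\hat K}$ on $\hat K$, with constant depending only on shape parameters. On the reference configuration, uni-solvence of \eqref{eq:vemdof1}--\eqref{eq:vemdof2} gives a linear isomorphism $\chi: V_k(\hat K) \to \mathbb R^N$, so $V_k(\hat K)$ is finite-dimensional. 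Since $\|\cdot\|_{0,\hat K}$ restricts to a norm on this space and $|\cdot|_{1,\hat K}$ to a continuous seminorm, finite-dimensional comparability of seminorms yields $|\hat v|_{1,\hat K} \leq C(\hat K)\|\hat v\|_{0,\hat K}$. Undoing the rescaling returns the claimed inequality.

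The main obstacle is uniformity of $C(\hat K)$ across all admissible shapes of $\hat K$. Here mesh hypotheses (A1) (bounded chunkiness of $K$) and (A2) (shape-regular, quasi-uniform simplicial subdivision $\mathcal T_K$) are essential: together they imply that the number of simplices in $\mathcal T_{\hat K}$, and hence $\dim V_k(\hat K)$, is uniformly bounded, and that $(\hat K,\mathcal T_{\hat K})$ ranges over a compact family of admissible configurations, on which the norm-comparison constant is continuous and therefore uniformly bounded.

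A more quantitative alternative, avoiding any abstract compactness, is to establish the two one-sided bounds
\[
h_K^d\|\chi(v)\|_{\ell^2}^2 \lesssim \|v\|_{0,K}^2,\qquad |v|_{1,K}^2 \lesssim h_K^{d-2}\|\chi(v)\|_{\ell^2}^2
\]
directly: the first via Cauchy--Schwarz applied face-by-face and element-by-element to each DoF, combined with the trace inequality on simplices of $\mathcal T_K$; the second via an explicit $H^1$-stable reconstruction of $v$ from its DoFs across the quasi-uniform subdivision $\mathcal T_K$, built simplex-by-simplex using the uni-solvent map $\chi^{-1}$ on a reference simplex and glued with bounded $H^1$-continuity. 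Combining these two estimates delivers the inverse inequality $|v|_{1,K} \lesssim h_K^{-1}\|v\|_{0,K}$.
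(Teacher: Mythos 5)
Your scaling reduction and the per-domain finite-dimensional norm equivalence are fine as far as they go ($\|\cdot\|_{0,\hat K}$ is indeed a norm on $V_k(\hat K)$, so for each fixed $\hat K$ some constant $C(\hat K)$ exists), but the crux of the lemma is the uniformity of the constant over all admissible elements, and that is exactly where your argument has a genuine gap. You assert that $(\hat K,\mathcal T_{\hat K})$ ranges over a compact family on which $C(\hat K)$ is continuous; neither claim is established, and the continuity claim is particularly delicate because $V_k(\hat K)$ is not an explicitly given space: its members are defined implicitly through local boundary value problems (the constraints $\Delta v\in\mathbb P_k(\hat K)$, $\partial_n v|_F\in\mathbb P_{k-1}(F)$, plus the $\Pi_k^{\hat K}$-orthogonality condition), so the space itself, and hence the best constant, depends on the domain through solution operators of PDEs. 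Proving that this constant varies continuously under perturbation of the polytope (including degenerating configurations allowed at the boundary of the admissible family) is a substantial domain-perturbation argument that you would have to supply; it is precisely to avoid this that VEM analyses, including the paper, prove such inequalities constructively. The paper's proof is short and quantitative: integration by parts gives $|v|_{1,K}^2\le\|\Delta v\|_{0,K}\|v\|_{0,K}+\|\partial_n v\|_{0,\partial K}\|v\|_{0,\partial K}$, and since $\Delta v$ and $\partial_n v$ are polynomials one has the bounds $h_K\|\Delta v\|_{0,K}+h_K^{1/2}\|\partial_n v\|_{0,\partial K}\lesssim |v|_{1,K}$ (estimates (A.3)--(A.4) of the cited reference), after which a multiplicative trace inequality and Young's inequality finish the argument with constants depending only on the stated shape parameters.

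Your ``quantitative alternative'' does not repair this, because its second estimate rests on a reconstruction of $v$ from its DoFs ``simplex-by-simplex using the uni-solvent map $\chi^{-1}$ on a reference simplex.'' A function $v\in V_k(K)$ is not a piecewise polynomial on $\mathcal T_K$ and is not determined locally on each simplex by the DoFs \eqref{eq:vemdof1}--\eqref{eq:vemdof2}; the DoFs determine $v$ only through the global implicit definition of $V_k(K)$. What such a construction yields is an $H^1$ bound for the reconstructed (interpolated) function, not for $v$ itself, and relating the two requires stability estimates of essentially the same nature as the inequality you are trying to prove. The workable quantitative route is the one the paper takes: exploit that the \emph{data} $\Delta v$ and $\partial_n v$ are polynomials, apply polynomial inverse and trace inequalities to them, and use integration by parts (this is also how the later norm equivalence \eqref{eq:Vknormequivalence} is obtained).
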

\begin{proof}
By (A.4) with $m=j=1$ in \cite{ChenHuang2020ncvem}, it follows that
\[
h_K^{1/2}\|\partial_nv\|_{0,\partial K}\lesssim |v|_{1,K}+h_K\|\Delta v\|_{0,K}.
\]
Then apply (A.3) in \cite{ChenHuang2020ncvem} to get
\[
h_K\|\Delta v\|_{0,K}+h_K^{1/2}\|\partial_nv\|_{0,\partial K}\lesssim |v|_{1,K}+h_K\|\Delta v\|_{0,K}\lesssim |v|_{1,K}.
\]
Employing the integration by parts and the Cauchy-Schwarz inequality, we have
\[
|v|_{1,K}^2=-(\Delta v,v)_K+(\partial_nv,v)_{\partial K}\leq\|\Delta v\|_{0,K}\|v\|_{0,K}+\|\partial_nv\|_{0,\partial K}\|v\|_{0,\partial K}.
\]
Combining the last two inequalities gives
% By $h_K\|\Delta v\|_{0,K}+h_K^{1/2}\|\partial_nv\|_{0,\partial K}\lesssim |v|_{1,K}$, i.e. (A.3)-(A.4) in \cite{ChenHuang2020ncvem}, we obtain
\[
|v|_{1,K}\lesssim h_K^{-1}\|v\|_{0,K}+h_K^{-1/2}\|v\|_{0,\partial K},
\]
which together with the multiplicative trace inequality and the Young's inequality yields \eqref{eq:veminverse}.
\end{proof}

\begin{lemma}
For $v\in V_k(K)$, we have
\begin{align}
\|\Pi_k^Kv\|_{0,K}^2+h_K^2|\Pi_k^Kv|_{1,K}^2&\lesssim \|Q_{k-2}^Kv\|_{0,K}^2+\sum_{F\in\mathcal F(K)}h_F\|Q_{k-1}^Fv\|_{0,F}^2, \label{eq:VEMnormequivalencePiK}\\
\|Q_k^Kv\|_{0,K}^2&\lesssim \|Q_{k-2}^Kv\|_{0,K}^2+\sum_{F\in\mathcal F(K)}h_F\|Q_{k-1}^Fv\|_{0,F}^2. \label{eq:VEMnormequivalenceQK}
\end{align}
\end{lemma}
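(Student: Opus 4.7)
The plan is to first bound $|\Pi_k^K v|_{1,K}^2$, then deduce $\|\Pi_k^K v\|_{0,K}^2$ via a polynomial Poincar\'e inequality, and finally obtain \eqref{eq:VEMnormequivalenceQK} directly from the identity \eqref{eq:QKPik}.

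For the $H^1$-seminorm, I would take $q=\Pi_k^K v\in\mathbb P_k(K)$ in \eqref{eq:projlocal2d1} and integrate by parts to get
\begin{equation*}
|\Pi_k^K v|_{1,K}^2 = (\nabla v, \nabla \Pi_k^K v)_K = -(v,\Delta\Pi_k^K v)_K + (v,\partial_n\Pi_k^K v)_{\partial K}.
\end{equation*}
Since $\Delta\Pi_k^K v\in\mathbb P_{k-2}(K)$ and $\partial_n\Pi_k^K v|_F\in\mathbb P_{k-1}(F)$, the occurrences of $v$ may be replaced by $Q_{k-2}^K v$ and $Q_{k-1}^F v$ respectively without changing the values. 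Cauchy--Schwarz together with the polynomial inverse inequalities $\|\Delta p\|_{0,K}\lesssim h_K^{-1}|p|_{1,K}$ and $\|\partial_n p\|_{0,F}\lesssim h_F^{-1/2}|p|_{1,K}$ for $p\in\mathbb P_k(K)$, combined with $h_F\eqsim h_K$ from the mesh conditions, then yields
\begin{equation*}
h_K^2|\Pi_k^K v|_{1,K}^2 \lesssim \|Q_{k-2}^K v\|_{0,K}^2 + \sum_{F\in\mathcal F(K)} h_F\|Q_{k-1}^F v\|_{0,F}^2.
\end{equation*}

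For the $L^2$-norm of $\Pi_k^K v$, set $c:=|\partial K|^{-1}\int_{\partial K} v$, which by \eqref{eq:projlocal2d2} also equals $|\partial K|^{-1}\int_{\partial K}\Pi_k^K v$. A polynomial Poincar\'e inequality with boundary-average normalization on the star-shaped polytope $K$ gives $\|\Pi_k^K v-c\|_{0,K}\lesssim h_K|\Pi_k^K v|_{1,K}$. Since $1\in\mathbb P_{k-1}(F)$, each integral $\int_F v$ equals $\int_F Q_{k-1}^F v$; Cauchy--Schwarz on the faces together with $|K|/|\partial K|\lesssim h_K$ then shows $|K|\,c^2\lesssim\sum_{F\in\mathcal F(K)} h_F\|Q_{k-1}^F v\|_{0,F}^2$. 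The triangle inequality $\|\Pi_k^K v\|_{0,K}^2\lesssim h_K^2|\Pi_k^K v|_{1,K}^2+|K|\,c^2$ and the seminorm estimate of the previous paragraph give \eqref{eq:VEMnormequivalencePiK}. The second bound \eqref{eq:VEMnormequivalenceQK} then follows from \eqref{eq:QKPik}, the triangle inequality and the $L^2$-stability $\|Q_{k-2}^K\Pi_k^K v\|_{0,K}\leq\|\Pi_k^K v\|_{0,K}$, which together yield $\|Q_k^K v\|_{0,K}\lesssim\|\Pi_k^K v\|_{0,K}+\|Q_{k-2}^K v\|_{0,K}$.

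The main obstacle I anticipate is the polynomial Poincar\'e inequality with the boundary-average normalization on a general star-shaped polytope, whose constant must depend only on the chunkiness parameter so that the hidden constants stay independent of $h_K$. This is standard in polytopal analysis, but I would either cite it from the VEM literature or prove it by a scaling/Bramble--Hilbert argument on a reference configuration. Everything else reduces to polynomial inverse inequalities and $L^2$-boundedness of the polynomial projections, which are routine under the mesh assumptions (A1)--(A2).
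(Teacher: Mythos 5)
Your proposal is correct and follows essentially the same route as the paper: the seminorm bound via \eqref{eq:projlocal2d1}, integration by parts, replacement of $v$ by $Q_{k-2}^Kv$ and $Q_{k-1}^Fv$, Cauchy--Schwarz and polynomial inverse/trace-inverse inequalities; the $L^2$ bound via a Poincar\'e--Friedrichs-type inequality normalized by the boundary integral (your boundary-average constant $c$ is the same device the paper uses, citing N\v{e}cas, together with \eqref{eq:projlocal2d2}); and \eqref{eq:VEMnormequivalenceQK} directly from \eqref{eq:QKPik}. The only cosmetic difference is that you invoke $h_F\eqsim h_K$, which is not needed: the face-wise inverse inequality $\|\partial_n p\|_{0,F}\lesssim h_F^{-1/2}h_F^{(d-1)/2}h_K^{-(d-1)/2}\,h_K^{-1/2}|p|_{1,K}\leq h_F^{-1/2}|p|_{1,K}$ already yields the $h_F$-weighted right-hand side, exactly as in the paper.
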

\begin{proof}
We get from \eqref{eq:projlocal2d1} and the integration by parts that
\begin{align*}
|\Pi_k^Kv|_{1,K}^2&=(\nabla v, \nabla\Pi_k^Kv)_K=-(v, \Delta\Pi_k^Kv)_K+(v, \partial_n(\Pi_k^Kv))_{\partial K} \\
&=-(Q_{k-2}^Kv, \Delta\Pi_k^Kv)_K+\sum_{F\in\mathcal F(K)}(Q_{k-1}^Fv, \partial_n(\Pi_k^Kv))_{F} \\
&\leq\|Q_{k-2}^Kv\|_{0,K}\|\Delta\Pi_k^Kv\|_{0,K}+\sum_{F\in\mathcal F(K)}\|Q_{k-1}^Fv\|_{0,F}\|\partial_n(\Pi_k^Kv)\|_{0,F},
\end{align*}
which combined with both $H^1$-$L^2$ and $L^2$ boundary-$L^2$ bulk inverse inequalities for polynomials implies
\[%\label{eq:VEMnormequivalencePiK}
h_K^2|\Pi_k^Kv|_{1,K}^2\lesssim \|Q_{k-2}^Kv\|_{0,K}^2+\sum_{F\in\mathcal F(K)}h_F\|Q_{k-1}^Fv\|_{0,F}^2. 
\]
Thanks to the Poincar\'e-Friedrichs inequality \cite{Necas1967} %\cite[(2.15)]{BrennerSung2018} 
and \eqref{eq:projlocal2d2},
\begin{align*}  
\|\Pi_k^Kv\|_{0,K}^2&\lesssim h_K|\Pi_k^Kv|_{1,K}^2+h_K^{2-d}\left|\int_{\partial K}v\,{\rm d}s\right|^2 \\
&=h_K^2|\Pi_k^Kv|_{1,K}^2+h_K^{2-d}\left|\sum_{F\in\mathcal F(K)}\int_{F}Q_{0}^Fv\,{\rm d}s\right|^2 \\
&\lesssim h_K^2|\Pi_k^Kv|_{1,K}^2+\sum_{F\in\mathcal F(K)}h_F\|Q_{0}^Fv\|_{0,F}^2.
\end{align*}
Hence \eqref{eq:VEMnormequivalencePiK} follows from the last two inequalities.

Finally, \eqref{eq:VEMnormequivalenceQK} holds from \eqref{eq:QKPik} and \eqref{eq:VEMnormequivalencePiK}.
\end{proof}

\begin{lemma}
It holds the norm equivalence
\begin{equation}\label{eq:Vknormequivalence}
h_K^2|v|_{1,K}^2\lesssim\|v\|_{0,K}^2\eqsim \|Q_{k-2}^Kv\|_{0,K}^2+\sum_{F\in\mathcal F(K)}h_F\|Q_{k-1}^Fv\|_{0,F}^2 \quad\forall~v\in V_k(K).
\end{equation}  
\end{lemma}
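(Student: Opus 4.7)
The first inequality $h_K^2|v|_{1,K}^2 \lesssim \|v\|_{0,K}^2$ is exactly Lemma~\ref{lem:veminverse}. For the upper side of the norm equivalence, $\|Q_{k-2}^Kv\|_{0,K}\le\|v\|_{0,K}$ by the $L^2$-contractivity of $Q_{k-2}^K$. For each $F\in\mathcal F(K)$ pick a simplex $T\in\mathcal T_K$ with $F\subset\partial T$; the multiplicative trace inequality combined with Lemma~\ref{lem:veminverse} yields $h_F\|Q_{k-1}^Fv\|_{0,F}^2 \le h_F\|v\|_{0,F}^2 \lesssim \|v\|_{0,T}^2+h_T^2|v|_{1,T}^2\lesssim\|v\|_{0,K}^2$, and summing over faces delivers the upper bound.

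The lower bound is the main work. I split $v=\Pi_k^Kv+\eta$ with $\eta:=v-\Pi_k^Kv$. From \eqref{eq:projlocal2d1}--\eqref{eq:projlocal2d2}, $(\nabla\eta,\nabla q)_K=0$ for every $q\in\mathbb P_k(K)$ and $\int_{\partial K}\eta\,{\rm d}s=0$; moreover $\eta\in V_k(K)$ since $V_k(K)\supset \mathbb P_k(K)$ (so $\Delta\eta\in\mathbb P_k(K)$, $\partial_n\eta|_F\in\mathbb P_{k-1}(F)$, and the enhancement condition for $v$ passes to $\eta$). The polynomial part $\Pi_k^Kv$ is already controlled by the right-hand side via \eqref{eq:VEMnormequivalencePiK}, so it only remains to bound $\|\eta\|_{0,K}$.

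Integration by parts gives $|\eta|_{1,K}^2=-(\Delta\eta,\eta)_K+(\partial_n\eta,\eta)_{\partial K}$. Because $\Delta\eta\in\mathbb P_k(K)$, $(\Delta\eta,\eta)_K=(\Delta\eta,Q_k^K\eta)_K$; and the enhancement condition forces $Q_k^K\eta\in\mathbb P_{k-2}(K)$ (since $Q_k^K(v-\Pi_k^Kv)$ has no component in $\mathbb P_{k-2}^\perp(K)$), i.e.\ $Q_k^K\eta=Q_{k-2}^K\eta$. Thus $(\Delta\eta,\eta)_K=(\Delta\eta,Q_{k-2}^K\eta)_K$; likewise $\partial_n\eta|_F\in\mathbb P_{k-1}(F)$ yields $(\partial_n\eta,\eta)_F=(\partial_n\eta,Q_{k-1}^F\eta)_F$. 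Cauchy--Schwarz, followed by the inverse estimates $h_K\|\Delta\eta\|_{0,K}+h_K^{1/2}\|\partial_n\eta\|_{0,\partial K}\lesssim|\eta|_{1,K}$ that appear inside the proof of Lemma~\ref{lem:veminverse}, then produce
$$h_K^2|\eta|_{1,K}^2 \lesssim \|Q_{k-2}^K\eta\|_{0,K}^2+\sum_{F\in\mathcal F(K)}h_F\|Q_{k-1}^F\eta\|_{0,F}^2.$$
Since $\int_{\partial K}\eta\,{\rm d}s=0$, the Poincar\'e--Friedrichs inequality upgrades this to $\|\eta\|_{0,K}\lesssim h_K|\eta|_{1,K}$. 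A triangle inequality, together with the inverse trace inequality applied to the polynomial $\Pi_k^Kv$ and another appeal to \eqref{eq:VEMnormequivalencePiK}, absorbs the $\Pi_k^Kv$ contributions and converts the $\eta$-projections back into the $v$-DoFs appearing on the right-hand side.

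The main obstacle is the identification $Q_k^K\eta=Q_{k-2}^K\eta$, which is exactly the enhancement in the definition of $V_k(K)$. Without it, replacing $\eta$ in the interior term $(\Delta\eta,\eta)_K$ would leave behind the full $k$-th order $L^2$-moments of $\eta$, which are not part of the DoF set and cannot be bounded by the right-hand side; the enhanced VEM structure is what makes this argument close.
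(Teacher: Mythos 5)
Your proof is correct, and at its core it runs on the same engine as the paper's: integrate by parts, exploit that $\Delta$ of a virtual function lies in $\mathbb P_k(K)$ and its normal derivative on each face lies in $\mathbb P_{k-1}(F)$ to replace the function by computable projections, and then invoke the bounds $h_K\|\Delta\cdot\|_{0,K}+h_K^{1/2}\|\partial_n\cdot\|_{0,\partial K}\lesssim|\cdot|_{1,K}$ from the proof of Lemma~\ref{lem:veminverse}. The packaging, however, is a genuine variant. The paper applies this directly to $v$, obtaining $h_K^2|v|_{1,K}^2\lesssim\|Q_k^Kv\|_{0,K}^2+\sum_{F}h_F\|Q_{k-1}^Fv\|_{0,F}^2$, and then swaps $\|Q_k^Kv\|_{0,K}$ for the DoF data via \eqref{eq:VEMnormequivalenceQK}; the enhancement enters only implicitly, through the identity \eqref{eq:QKPik}. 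You instead split $v=\Pi_k^Kv+\eta$, use the enhancement in its raw form $Q_k^K\eta=Q_{k-2}^K\eta$ (your check that $\eta\in V_k(K)$ and that the enhancement passes to $\eta$ is right, since $\Pi_k^K\eta=0$ by \eqref{eq:localproj2dprop1}), control $\|\eta\|_{0,K}$ by Poincar\'e--Friedrichs using $\int_{\partial K}\eta\,{\rm d}s=0$ from \eqref{eq:projlocal2d2}, and reassemble with \eqref{eq:VEMnormequivalencePiK}. What your route buys is that the passage from the seminorm bound to the $L^2$ bound is completely explicit, whereas the paper leaves the analogous step (recovering $\|v\|_{0,K}$ from $\|Q_k^Kv\|_{0,K}$ and $h_K|v|_{1,K}$) unstated; the price is an extra triangle-inequality round trip converting the $\eta$-projections back to $v$-projections. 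Two cosmetic remarks: in the upper bound, a polytope face $F\in\mathcal F(K)$ need not lie in the boundary of a single simplex $T\in\mathcal T_K$, so either sum over the subfaces of $F$ induced by $\mathcal T_K$ or apply the trace inequality on the star-shaped $K$ itself; and when absorbing $h_F\|Q_{k-1}^F\Pi_k^Kv\|_{0,F}^2$, the trace inequality yields $\|\Pi_k^Kv\|_{0,K}^2+h_K^2|\Pi_k^Kv|_{1,K}^2$, which is exactly what \eqref{eq:VEMnormequivalencePiK} controls, so no separate polynomial inverse trace inequality on $K$ is actually needed.
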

\begin{proof}
Since $\Delta v\in\mathbb P_{k}(K)$ and $\partial_nv|_F\in\mathbb P_{k-1}(F)$, we get from the integration by parts that
\begin{align*}
|v|_{1,K}^2&=-(\Delta v,Q_k^Kv)_K+\sum_{F\in\mathcal F(K)}(\partial_nv,Q_{k-1}^Fv)_{F} \\
&\leq\|\Delta v\|_{0,K}\|Q_k^Kv\|_{0,K}+\sum_{F\in\mathcal F(K)}\|\partial_nv\|_{0,F}\|Q_{k-1}^Fv\|_{0,F}.
\end{align*}
Applying the similar argument as in Lemma \ref{lem:veminverse}, we obtain
\[
h_K^2|v|_{1,K}^2\lesssim \|Q_k^Kv\|_{0,K}^2+\sum_{F\in\mathcal F(K)}h_F\|Q_{k-1}^Fv\|_{0,F}^2.
\]
Then it follows from \eqref{eq:VEMnormequivalenceQK} that
\[
\|v\|_{0,K}^2\lesssim \|Q_{k-2}^Kv\|_{0,K}^2+\sum_{F\in\mathcal F(K)}h_F\|Q_{k-1}^Fv\|_{0,F}^2.
\]
The other side $\|Q_{k-2}^Kv\|_{0,K}^2+\sum\limits_{F\in\mathcal F(K)}h_F\|Q_{k-1}^Fv\|_{0,F}^2\lesssim \|v\|_{0,K}^2$ holds from the trace inequality and the inverse inequality \eqref{eq:veminverse}.
\end{proof}

\subsection{Local inf-sup condition and norm equivalence}
With the help of the macro element space $\mathbb{V}_{k-1}^{\rm div}(K)$, we will present a norm equivalence for space $\nabla V_k(K)$, which is vitally important to design virtual element methods without extrinsic stabilization. 
% Let $Q_K^{\div}$ be the $L^2$-orthogonal projection operator onto $\mathbb{V}_{k-1}^{\rm div}(K)$ with respect to the inner product $(\cdot, \cdot)_K$.
\begin{lemma}\label{lem:gradVknormequivalence}
It holds the inf-sup condition
\begin{equation}\label{eq:localdiscreteinfsup}    
\|\nabla v\|_{0,K}\leq C_e\sup_{\boldsymbol{\phi}\in\mathbb{V}_{k-1}^{\rm div}(K)}\frac{(\boldsymbol{\phi}, \nabla v)_K}{\|\boldsymbol{\phi}\|_{0,K}} \quad \forall~v\in V_k(K),
\end{equation}
where the constant $C_e\geq1$ is independent of the mesh size $h_K$, but depends on the chunkiness parameter of the polytope, the degree of polynomials $k$, the dimension of space $d$, and the shape regularity and quasi-uniform constants of the virtual triangulation $\mathcal T^*_h$.
Consequently,
\begin{equation}\label{eq:gradVknormequivalence}  
\|Q_{K,k-1}^{\div}\nabla v\|_{0,K}\eqsim \|\nabla v\|_{0,K} \quad \forall~v\in V_k(K).
\end{equation}
\end{lemma}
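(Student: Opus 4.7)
The plan is to realize the inf-sup \eqref{eq:localdiscreteinfsup} by an explicit construction of a test function; the equivalence $\|Q_{K,k-1}^{\div}\nabla v\|_{0,K}\eqsim\|\nabla v\|_{0,K}$ then follows immediately, since the supremum on the right of \eqref{eq:localdiscreteinfsup} is attained at $\boldsymbol{\phi}=Q_{K,k-1}^{\div}\nabla v$ and equals $\|Q_{K,k-1}^{\div}\nabla v\|_{0,K}$, while Cauchy--Schwarz gives the reverse bound.

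Decompose $v=\Pi_k^K v+w$ with $w:=v-\Pi_k^K v\in V_k(K)$, and build $\boldsymbol{\phi}_w\in\mathbb{V}_{k-1}^{\rm div}(K)$ via the unisolvent DoFs \eqref{Vpdivdof1}-\eqref{Vpdivdof3}: set $\boldsymbol{\phi}_w\cdot\boldsymbol{n}|_F=\partial_n w|_F$ on every $F\in\mathcal F(K)$, $\div\boldsymbol{\phi}_w=Q_{\max\{k-2,0\}}^K\Delta w$, and the remaining DoFs \eqref{Vpdivdof3} equal to zero. The polynomial degrees match by the definition of $V_k(K)$, and the compatibility $\int_K\div\boldsymbol{\phi}_w=\int_{\partial K}\boldsymbol{\phi}_w\cdot\boldsymbol{n}$ is the divergence theorem applied to $\nabla w$. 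Because the DoFs \eqref{Vpdivdof3} vanish, the matching supremum term in \eqref{eq:Vkm1divnormequiv} drops out, and combined with the inverse-type estimate $h_K\|\Delta w\|_{0,K}+h_K^{1/2}\|\partial_n w\|_{0,\partial K}\lesssim |w|_{1,K}$ extracted from the proof of Lemma~\ref{lem:veminverse}, this yields
\[
\|\boldsymbol{\phi}_w\|_{0,K}\lesssim h_K\|\Delta w\|_{0,K}+\sum_{F\in\mathcal F(K)}h_F^{1/2}\|\partial_n w\|_{0,F}\lesssim \|\nabla w\|_{0,K}.
\]

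The algebraic heart of the argument is the clean identity $(\nabla w,\boldsymbol{\phi}_w)_K=\|\nabla w\|_{0,K}^{2}$: integration by parts reduces the pairing to $L^2$-projections of $w$, and $\Pi_k^K w=\Pi_k^K v-\Pi_k^K v=0$ together with the VEM moment condition force $Q_k^K w=Q_{\max\{k-2,0\}}^K w$, so the correction vanishes. With this in hand, form the combined test function $\boldsymbol{\phi}:=\nabla\Pi_k^K v+\alpha\boldsymbol{\phi}_w$, which lies in $\mathbb{V}_{k-1}^{\rm div}(K)$ because $\nabla\Pi_k^K v\in\mathbb{P}_{k-1}(K;\mathbb{R}^d)\subset\mathbb{V}_{k-1}^{\rm div}(K)$. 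The $H^1$-projection property $(\nabla v,\nabla\Pi_k^K v)_K=\|\nabla\Pi_k^K v\|_{0,K}^{2}$ and $(\nabla\Pi_k^K v,\nabla w)_K=0$ give
\[
(\nabla v,\boldsymbol{\phi})_K=\|\nabla\Pi_k^K v\|_{0,K}^{2}+\alpha\|\nabla w\|_{0,K}^{2}+\alpha(\nabla\Pi_k^K v,\boldsymbol{\phi}_w)_K.
\]

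The cross term $(\nabla\Pi_k^K v,\boldsymbol{\phi}_w)_K$ is the main obstacle. Integrating by parts against the data of $\boldsymbol{\phi}_w$ and exploiting the $H^1$-orthogonality, it reduces to $(\tilde p,(I-Q_{\max\{k-2,0\}}^K)\Delta w)_K$ with $\tilde p:=(I-Q_{\max\{k-2,0\}}^K)\Pi_k^K v$, which is then controlled by Cauchy--Schwarz, the polynomial approximation estimate $\|\tilde p\|_{0,K}\lesssim h_K|\Pi_k^K v|_{1,K}$ on $\mathbb{P}_k(K)$ (Bramble--Hilbert combined with the polynomial inverse inequality, with Poincar\'e covering $k=1$), and $\|\Delta w\|_{0,K}\lesssim h_K^{-1}|w|_{1,K}$, producing $|(\nabla\Pi_k^K v,\boldsymbol{\phi}_w)_K|\lesssim \|\nabla\Pi_k^K v\|_{0,K}\|\nabla w\|_{0,K}$. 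Young's inequality with a fixed, sufficiently small $\alpha>0$ (depending only on the universal constants from the preceding inequalities) preserves strict positivity of the coefficients of both $\|\nabla\Pi_k^K v\|_{0,K}^{2}$ and $\|\nabla w\|_{0,K}^{2}$, so that the orthogonal identity $\|\nabla v\|_{0,K}^{2}=\|\nabla\Pi_k^K v\|_{0,K}^{2}+\|\nabla w\|_{0,K}^{2}$ delivers $(\nabla v,\boldsymbol{\phi})_K\gtrsim\|\nabla v\|_{0,K}^{2}$, while $\|\boldsymbol{\phi}\|_{0,K}\leq\|\nabla\Pi_k^K v\|_{0,K}+\alpha\|\boldsymbol{\phi}_w\|_{0,K}\lesssim\|\nabla v\|_{0,K}$. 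Taking the ratio proves \eqref{eq:localdiscreteinfsup}.
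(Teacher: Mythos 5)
Your proposal is correct: every step checks out, including the delicate points — the compatibility of the prescribed divergence $Q_{\max\{k-2,0\}}^K\Delta w$ with the boundary data $\partial_n w$ via the divergence theorem, the identity $(\nabla w,\boldsymbol{\phi}_w)_K=\|\nabla w\|_{0,K}^2$ (which indeed follows from $\Pi_k^Kw=0$ and the enhancement condition, giving $(I-Q_{\max\{k-2,0\}}^K)w\perp\mathbb P_k(K)$ while $\Delta w\in\mathbb P_k(K)$), and the cross-term bound. However, the route is genuinely different from the paper's. The paper normalizes $v$ to zero mean and builds one test function whose DoFs \eqref{Vpdivdof1}--\eqref{Vpdivdof3} are scaled moments of $v$ itself, i.e.\ $(\boldsymbol{\phi}\cdot\boldsymbol{n})|_F=h_K^{-1}Q_{k-1}^Fv$ and $\div\boldsymbol{\phi}-Q_0^K(\div\boldsymbol{\phi})=-h_K^{-2}Q_{k-2}^Kv$, so that $(\boldsymbol{\phi},\nabla v)_K$ equals the DoF quantity $h_K^{-2}\|Q_{k-2}^Kv\|_{0,K}^2+\sum_F h_K^{-1}\|Q_{k-1}^Fv\|_{0,F}^2$, and the lower bound comes from the $L^2$ norm equivalence \eqref{eq:Vknormequivalence} of $V_k(K)$ plus Poincar\'e--Friedrichs; no splitting and no cross term appear. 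You instead decompose $v=\Pi_k^Kv+w$, treat the polynomial part exactly with $\nabla\Pi_k^Kv\in\mathbb P_{k-1}(K;\mathbb R^d)\subset\mathbb{V}_{k-1}^{\rm div}(K)$, match $\boldsymbol{\phi}_w$ to the Neumann data and projected Laplacian of $w$, and pay for the exact duality identity with a cross term absorbed by Young's inequality. Both arguments lean on the same two external ingredients — the macro-space equivalence \eqref{eq:Vkm1divnormequiv} with the interior DoFs \eqref{Vpdivdof3} set to zero, and the inverse/trace estimates for $V_k(K)$ from the proof of Lemma~\ref{lem:veminverse} — but yours bypasses \eqref{eq:Vknormequivalence} and the mean-zero normalization, which makes it somewhat more self-contained at the price of the perturbation argument. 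What the paper's construction buys is portability: since its test function is driven only by $Q_{k-1}^Fv$ and $Q_{k-2}^Kv$, the identical argument is reused verbatim for the conforming element in Section~\ref{sec:stabfreecfmvem}, where $\partial_nv|_F$ is not a polynomial; your prescription $\boldsymbol{\phi}_w\cdot\boldsymbol{n}|_F=\partial_nw|_F$ is tied to the nonconforming space and would need adaptation (e.g.\ projecting $\partial_nw$ onto the face polynomial space and controlling the resulting consistency terms) before covering that case.
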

\begin{proof}
Clearly the norm equivalence \eqref{eq:gradVknormequivalence} follows from the local inf-sup condition~\eqref{eq:localdiscreteinfsup}. We will focus on the proof of \eqref{eq:localdiscreteinfsup}.
Without loss of generality, assume $v\in V_k(K)\cap L_0^2(K)$.
Based on DoFs \eqref{Vpdivdof1}-\eqref{Vpdivdof3}, take $\boldsymbol{\phi}\in\mathbb{V}_{k-1}^{\rm div}(K)$ such that
\begin{align*}
(\boldsymbol{\phi}\cdot\boldsymbol{n}, q)_F&=h_K^{-1}(v, q)_F  \quad\quad\,\forall~q\in\mathbb P_{k-1}(F) \textrm{ on each }  F\in\mathcal F(K), \\
(\div\boldsymbol{\phi}, q)_K&=-h_K^{-2}(v, q)_K  \quad\forall~q\in\mathbb P_{\max\{k-2,0\}}(K)/\mathbb R, \\
(\boldsymbol{\phi}, \boldsymbol{q})_K&=0  \qquad\qquad\qquad\forall~\boldsymbol{q}\in \div\skw\mathring{\boldsymbol{V}}_{k}^{d-2}(K). 
\end{align*}
Then $(\boldsymbol{\phi}\cdot\boldsymbol{n})|_{F}=h_K^{-1}Q_{k-1}^Fv$ for $F\in\mathcal F(K)$. Since $\div\boldsymbol{\phi}\in \mathbb P_{\max\{k-2,0\}}(K)$, we have $\div\boldsymbol{\phi}-Q_{0}^K(\div\boldsymbol{\phi})=-h_K^{-2}Q_{k-2}^Kv$. Apply the integration by parts and the fact $v=v-Q_{0}^Kv\in L_0^2(K)$ to get
\begin{align*}
(\boldsymbol{\phi}, \nabla v)_K&=-(\div\boldsymbol{\phi}, v)_K + (\boldsymbol{\phi}\cdot\boldsymbol{n}, v)_{\partial K} \\
&=-(\div\boldsymbol{\phi}-Q_{0}^K(\div\boldsymbol{\phi}), v)_K+\sum_{F\in\mathcal F(K)}(\boldsymbol{\phi}\cdot\boldsymbol{n}, Q_{k-1}^Fv)_{F} \\
&=h_K^{-2}\|Q_{k-2}^Kv\|_{0,K}^2+\sum_{F\in\mathcal F(K)}h_K^{-1}\|Q_{k-1}^Fv\|_{0,F}^2.
\end{align*}
By the norm equivalence \eqref{eq:Vknormequivalence}, we get
\begin{equation}\label{eq:20220204}
\|\nabla v\|_{0,K}^{2}\lesssim h_K^{-2}\|Q_{k-2}^Kv\|_{0,K}^2+\sum_{F\in\mathcal F(K)}h_K^{-1}\|Q_{k-1}^Fv\|_{0,F}^2= (\boldsymbol{\phi}, \nabla v)_K.
\end{equation}

On the other hand, it follows from the integration by parts that
\begin{align*}
\|Q_0^K(\div\boldsymbol{\phi})\|_{0,K}&\lesssim h_{K}^{d/2}\big|Q_0^K(\div\boldsymbol{\phi})\big|\lesssim h_{K}^{-d/2}\big|(\div\boldsymbol{\phi},1)_{K}\big|=h_{K}^{-d/2}\big|(\boldsymbol{\phi}\cdot\boldsymbol{n},1)_{\partial K}\big| \\
&\lesssim \sum_{F\in\mathcal F(K)}h_F^{-1/2}\|\boldsymbol{\phi}\cdot\boldsymbol{n}\|_{0,F}.
\end{align*}
Employing the norm equivalence \eqref{eq:Vkm1divnormequiv},
we acquire
\begin{align*}
\|\boldsymbol{\phi}\|_{0,K}&\eqsim h_K\|\div\boldsymbol{\phi}\|_{0,K} +\sum_{F\in\mathcal F(K)}h_F^{1/2}\|\boldsymbol{\phi}\cdot\boldsymbol{n}\|_{0,F} \\
&\lesssim h_K\|\div\boldsymbol{\phi}-Q_0^K(\div\boldsymbol{\phi})\|_{0,K} +\sum_{F\in\mathcal F(K)}h_F^{1/2}\|\boldsymbol{\phi}\cdot\boldsymbol{n}\|_{0,F}.
\end{align*}
Noting that $\div\boldsymbol{\phi}-Q_{0}^K(\div\boldsymbol{\phi})=-h_K^{-2}Q_{k-2}^Kv$ and $(\boldsymbol{\phi}\cdot\boldsymbol{n})|_{F}=h_K^{-1}Q_{k-1}^Fv$ for $F\in\mathcal F(K)$, we have
\[
\|\boldsymbol{\phi}\|_{0,K}\lesssim h_K^{-1}\|Q_{k-2}^Kv\|_{0,K} +\sum_{F\in\mathcal F(K)}h_F^{-1/2}\|Q_{k-1}^Fv\|_{0,F}.
\]
Then we obtain from the norm equivalence \eqref{eq:Vknormequivalence} and the Poincar\'e-Friedrichs  inequality \cite[(2.14)]{BrennerSung2018} that
\[
\|\boldsymbol{\phi}\|_{0,K}\lesssim h_K^{-1}\|v\|_{0,K}\lesssim \|\nabla v\|_{0,K}.    
\]
Finally, we conclude \eqref{eq:localdiscreteinfsup} from \eqref{eq:20220204} and the last inequality.
\end{proof}

\subsection{Discrete method}
Define the global nonconforming virtual element space
\begin{align*}
V_h&:=\{v_h\in L^2(\Omega): v_h|_K\in V_k(K) \textrm{ for each } K\in\mathcal T_h, \textrm{  } \\
&\qquad\; \textrm{DoFs \eqref{eq:vemdof1} are single-valued for $F\in\mathcal F_h$, and vanish for $F\in \mathcal F_h^{\partial}$}\}.
\end{align*}
We have the discrete Poincar\'e inequality \cite{Brenner2003}%(cf. \cite[(4.16)]{ChenHuang2020ncvem})
\begin{equation}\label{eq:vempoincareineqlty}
\|v_h\|_0\leq C_p |v_h|_{1,h} \quad\forall~v_h\in V_h,
\end{equation}
where the constant $C_p$ is independent of the mesh size.
Hence $|\cdot|_{1,h}$ is indeed a norm for $V_h$.
% Introduce discrete spaces
% \begin{align*}
% \mathbb{V}_{h,k-1}^{\rm div}&:=\{\boldsymbol{\phi}_h\in \boldsymbol{L}^2(\Omega;\mathbb R^d): \boldsymbol{\phi}_h|_K\in \mathbb{V}_{k-1}^{\rm div}(K) \textrm{ for each } K\in\mathcal T_h\}, \\
% \mathbb P_l(\mathcal T_h)&:=\{q_h\in L^2(\Omega): q_h|_K\in \mathbb P_l(K) \textrm{ for each } K\in\mathcal T_h\}
% \end{align*}
% with non-negative integer $l$.
% For $\boldsymbol{\phi}\in\boldsymbol{L}^2(\Omega;\mathbb R^d)$, let $Q_h^{\div}\boldsymbol{\phi}\in\mathbb{V}_{h,k-1}^{\rm div}$ be determined by $(Q_h^{\div}\boldsymbol{\phi})|_K=Q_K^{\div}(\boldsymbol{\phi}|_K)$ for each $K\in\mathcal T_h$. For $v\in L^2(\Omega)$, let $Q_h^lv\in \mathbb P_l(\mathcal T_h)$ be determined by $(Q_h^{l}v)|_K=Q_{l}^K(v|_K)$ for each $K\in\mathcal T_h$. For simplicity, the vector version of $Q_h^l$ is still denoted by $Q_h^l$. And we abbreviate $Q_h^k$ as $Q_h$ if $l=k$.

Based on the weak formulation \eqref{eq:ellipitc2ndproblemweakform}, we propose a virtual element method without extrinsic stabilization for problem \eqref{eq:ellipitc2ndproblem} as follows: find $u_h\in V_h$ such that
\begin{equation}\label{eq:vem}
a_h(u_h, v_h)=(f, Q_hv_h)\quad\forall~v_h\in V_h,
\end{equation}
where the discrete bilinear form 
\[
a_h(u_h, v_h):=(Q_{h,k-1}^{\div}\nabla_h u_h, Q_{h,k-1}^{\div}\nabla_h v_h)+\alpha(Q_hu_h, Q_hv_h).
\]

\begin{remark}\rm
By introducing $\boldsymbol{\phi}_h=Q_{h,k-1}^{\div}\nabla_h u_h$, the VEM \eqref{eq:vem} can be rewritten as the following primal mixed VEM: find $\boldsymbol{\phi}_h\in\mathbb{V}_{h,k-1}^{\rm div}$ and $u_h\in V_h$ such that
\begin{equation*}
\begin{aligned}
(\boldsymbol{\phi}_h, \boldsymbol{\psi}_h) - (\boldsymbol{\psi}_h, \nabla_h u_h)&=0 \quad\quad\quad\quad\;\;\forall~\boldsymbol{\psi}_h\in\mathbb{V}_{h,k-1}^{\rm div},\\
(\boldsymbol{\phi}_h, \nabla_h v_h)+\alpha(Q_hu_h, Q_hv_h)&=(f, Q_hv_h)\quad\forall~v_h\in V_h.
\end{aligned}
\end{equation*}
\end{remark}

\begin{remark}\rm
The mixed-order HHO method without extrinsic stabilization for the Poisson equation in~\cite{CicuttinErnLemaire2019} is equivalent to find $u_h\in W_h$ such that
\begin{equation}\label{eq:2023111}    
(\nabla_h u_h, \nabla_h v_h)=\sum_{K\in\mathcal T_h}(f, Q_{k-2}^Kv_h)_K\quad\forall~v_h\in W_h,
\end{equation}
where
\begin{align*}
W_h&:=\{v_h\in L^2(\Omega): v_h|_K\in W_k(K) \textrm{ for each } K\in\mathcal T_h, \textrm{  } \\
&\qquad\; \textrm{DoFs \eqref{eq:vemdof1} are single-valued for $F\in\mathcal F_h$, and vanish for $F\in \mathcal F_h^{\partial}$}\}
\end{align*}  
with $W_k(K):=\{v\in H^1(K) : \Delta v\in\mathbb P_{k-2}(K),\, \partial_nv|_F\in\mathbb P_{k-1}(F)\textrm{ for }F\in\mathcal F(K)\}$.
However, $(\nabla_h u_h, \nabla_h v_h)$ is not computable, % based on the degrees of freedom \eqref{eq:vemdof1}-\eqref{eq:vemdof2}, 
and the method \eqref{eq:2023111} is not the standard nonconforming VEM in \cite{AyusodeDiosLipnikovManzini2016}. For the practical computation, a basis of $W_k(K)$ has to be solved approximately as shown in \cite[Remark 4.1]{CicuttinErnLemaire2019}, then the approximation of the space $W_h$ is no more a virtual element space. 
\end{remark}

It follows from the discrete Poincar\'e inequality \eqref{eq:vempoincareineqlty} that 
\begin{equation}\label{eq:ahbounedness}
a_h(u_h, v_h)\lesssim |u_h|_{1,h}|v_h|_{1,h}\quad\forall~u_h,v_h\in H_0^1(\Omega)+V_h.
\end{equation}  
\begin{lemma}
It holds the coercivity
\begin{equation}\label{eq:ahcoercivity}
|v_h|_{1,h}^2\leq C_e^2 a_h(v_h, v_h)\quad\forall~v_h\in V_h.
\end{equation}  
\end{lemma}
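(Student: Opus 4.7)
The plan is to reduce the coercivity to the local norm equivalence \eqref{eq:gradVknormequivalence} already established in Lemma~\ref{lem:gradVknormequivalence}, then sum over elements. Since $\alpha\geq 0$, the mass term $\alpha\|Q_hv_h\|_0^2$ in $a_h(v_h,v_h)$ is non-negative, so it suffices to prove
\[
|v_h|_{1,h}^2\leq C_e^2\,\|Q_{h,k-1}^{\div}\nabla_h v_h\|_0^2\quad\forall~v_h\in V_h.
\]

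First I would observe that, by the definition of the $L^2$-orthogonal projector $Q_{K,k-1}^{\div}$ onto $\mathbb{V}_{k-1}^{\rm div}(K)$, for each polytope $K\in\mathcal T_h$ and each $\boldsymbol{\phi}\in\mathbb{V}_{k-1}^{\rm div}(K)$ we have $(\boldsymbol{\phi},\nabla v_h)_K=(\boldsymbol{\phi},Q_{K,k-1}^{\div}\nabla v_h)_K$, and hence
\[
\sup_{\boldsymbol{\phi}\in\mathbb{V}_{k-1}^{\rm div}(K)}\frac{(\boldsymbol{\phi}, \nabla v_h)_K}{\|\boldsymbol{\phi}\|_{0,K}}=\|Q_{K,k-1}^{\div}\nabla v_h\|_{0,K}.
\]
The local inf-sup condition \eqref{eq:localdiscreteinfsup} then yields $\|\nabla v_h\|_{0,K}\leq C_e\|Q_{K,k-1}^{\div}\nabla v_h\|_{0,K}$ for each $K$.

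Squaring and summing over $K\in\mathcal T_h$, using that $(Q_{h,k-1}^{\div}\nabla_h v_h)|_K=Q_{K,k-1}^{\div}\nabla(v_h|_K)$ by the definition of the global projector, gives
\[
|v_h|_{1,h}^2=\sum_{K\in\mathcal T_h}\|\nabla v_h\|_{0,K}^2\leq C_e^2\sum_{K\in\mathcal T_h}\|Q_{K,k-1}^{\div}\nabla v_h\|_{0,K}^2=C_e^2\|Q_{h,k-1}^{\div}\nabla_h v_h\|_0^2.
\]
Combining with $\alpha\|Q_hv_h\|_0^2\geq 0$ yields \eqref{eq:ahcoercivity}.

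There is essentially no obstacle here: all the work has already been done in Lemma~\ref{lem:gradVknormequivalence}, where the local inf-sup constant $C_e$ was isolated precisely so that it can be tracked through this summation. The only thing worth noting is that the constant in the coercivity coincides with the (dimension-, chunkiness-, and polynomial-degree-dependent) constant $C_e$ from the local inf-sup, independent of $h$.
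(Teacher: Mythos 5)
Your proof is correct and follows essentially the same route as the paper: apply the local inf-sup bound $\|\nabla v_h\|_{0,K}\leq C_e\|Q_{K,k-1}^{\div}\nabla v_h\|_{0,K}$ (equivalently, the norm equivalence \eqref{eq:gradVknormequivalence}) on each element, sum over $K\in\mathcal T_h$, and drop the non-negative mass term since $\alpha\geq0$. The only difference is that you spell out why the supremum in \eqref{eq:localdiscreteinfsup} equals $\|Q_{K,k-1}^{\div}\nabla v_h\|_{0,K}$, which the paper leaves implicit.
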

\begin{proof}
Due to \eqref{eq:gradVknormequivalence}, we have
\[
\sum_{K\in\mathcal T_h}\|\nabla_h v_h\|_{0,K}^2\leq C_e^2 \sum_{K\in\mathcal T_h}\|Q_{K,k-1}^{\div}\nabla v_h\|_{0,K}^2\leq C_e^2a_h(v_h, v_h)\quad\forall~v_h\in V_h,
\]
which implies the coercivity \eqref{eq:ahcoercivity}.
\end{proof}

\begin{theorem}
The VEM \eqref{eq:vem} is well-posed.
\end{theorem}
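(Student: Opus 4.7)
The plan is to reduce well-posedness to the Lax--Milgram theorem on the finite-dimensional space $V_h$ equipped with the norm $|\cdot|_{1,h}$ (which is indeed a norm on $V_h$ thanks to the discrete Poincar\'e inequality~\eqref{eq:vempoincareineqlty}). I would check three ingredients: that $a_h$ is a bilinear form on $V_h \times V_h$ (immediate from the definition, since $Q_{h,k-1}^{\div}\nabla_h v_h$ and $Q_h v_h$ are well-defined for $v_h \in V_h$), that it is bounded with respect to $|\cdot|_{1,h}$, and that it is coercive with respect to $|\cdot|_{1,h}$.

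For boundedness, I would invoke \eqref{eq:ahbounedness}, which already delivers $a_h(u_h, v_h) \lesssim |u_h|_{1,h}|v_h|_{1,h}$. For coercivity, I would invoke \eqref{eq:ahcoercivity}, which yields the lower bound $|v_h|_{1,h}^2 \leq C_e^2\, a_h(v_h, v_h)$; this is the heart of the argument, and it rests precisely on the local norm equivalence \eqref{eq:gradVknormequivalence} established in Lemma~\ref{lem:gradVknormequivalence}. It is crucial here that this coercivity holds without any extrinsic stabilization, which is the whole point of the construction of $\mathbb{V}_{k-1}^{\rm div}(K)$.

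Next I would verify that the load functional $v_h \mapsto (f, Q_h v_h)$ is bounded on $(V_h, |\cdot|_{1,h})$. Using the $L^2$-boundedness of the projector $Q_h$, the Cauchy--Schwarz inequality, and the discrete Poincar\'e inequality \eqref{eq:vempoincareineqlty}, one obtains
\[
|(f, Q_h v_h)| \leq \|f\|_0\, \|Q_h v_h\|_0 \leq \|f\|_0\, \|v_h\|_0 \leq C_p\|f\|_0\, |v_h|_{1,h}.
\]
With boundedness, coercivity, and a bounded linear functional in hand, the Lax--Milgram lemma (or equivalently the fact that the stiffness matrix is symmetric positive definite on the finite-dimensional space $V_h$) yields existence and uniqueness of $u_h \in V_h$ solving \eqref{eq:vem}.

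I do not anticipate a genuine obstacle here: all nontrivial work has already been done in proving the local inf-sup condition \eqref{eq:localdiscreteinfsup} and its consequence \eqref{eq:gradVknormequivalence}, which in turn powers the coercivity \eqref{eq:ahcoercivity}. The only mild care needed is to remember that $|\cdot|_{1,h}$ is a norm on $V_h$ (not merely a seminorm), which relies on the discrete Poincar\'e inequality valid for the nonconforming space $V_h$ with its homogeneous boundary DoFs. Once that is noted, the proof is a one-line appeal to Lax--Milgram.
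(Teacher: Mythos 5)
Your argument is correct and is essentially the paper's own proof: boundedness \eqref{eq:ahbounedness} and coercivity \eqref{eq:ahcoercivity} (the latter resting on the norm equivalence \eqref{eq:gradVknormequivalence}) combined with the Lax--Milgram lemma. Your additional checks --- that $|\cdot|_{1,h}$ is a norm on $V_h$ via \eqref{eq:vempoincareineqlty} and that the load functional $v_h\mapsto(f,Q_hv_h)$ is bounded --- are sound and simply make explicit what the paper leaves implicit.
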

\begin{proof}
Thanks to the boundedness \eqref{eq:ahbounedness} and the coercivity \eqref{eq:ahcoercivity}, we conclude the result from the Lax-Milgram lemma \cite{LaxMilgram1954}.
\end{proof}

\subsection{Error analysis}
\begin{theorem}\label{thm:errorestimateH1}
Let $u\in H_0^1(\Omega)$ be the solution of problem \eqref{eq:ellipitc2ndproblem}, and $u_h\in V_h$ be the solution of the VEM \eqref{eq:vem}. It holds that
\begin{align}
\label{eq:errorestimateH1abs}
|u-u_h|_{1,h}&\leq C_e^2(\|\nabla u-Q_h^{k-1}\nabla u\|_0+\alpha C_p\|u-Q_hu\|_0) \\
\notag
&\quad + \inf_{v_h\in V_h}\big((C_e^2+1)|u-v_h|_{1,h}+\alpha C_p\|u-v_h\|_0\big)\\
\notag
&\quad + C_e^2\sup_{w_h\in V_h}\frac{a(u,w_h)-(f, Q_hw_h)}{|w_h|_{1,h}}.
\end{align}
Assume $u\in H^{k+1}(\Omega)$ and $f\in H^{k-1}(\Omega)$. Then
\begin{equation}\label{eq:errorestimateH1}
|u-u_h|_{1,h}\lesssim h^k(|u|_{k+1}+|f|_{k-1}).
\end{equation}
\end{theorem}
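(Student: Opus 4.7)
The plan is to adapt a second-Strang-type argument to this setting. Start from the triangle inequality $|u-u_h|_{1,h}\leq |u-v_h|_{1,h}+|v_h-u_h|_{1,h}$ for an arbitrary $v_h\in V_h$. By the coercivity \eqref{eq:ahcoercivity} applied to $v_h-u_h$ and the discrete equation \eqref{eq:vem}, setting $w_h:=v_h-u_h$ yields
\begin{equation*}
|v_h-u_h|_{1,h}^2 \leq C_e^2\,a_h(v_h-u_h,w_h) = C_e^2\bigl[(a_h(v_h,w_h)-a(u,w_h))+(a(u,w_h)-(f,Q_hw_h))\bigr].
\end{equation*}
Dividing by $|w_h|_{1,h}$ and taking the supremum over $V_h$ reduces the proof to controlling a ``bilinear form mismatch'' and a ``consistency error.''

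For the bilinear form mismatch $a_h(v_h,w_h)-a(u,w_h)$, treat the gradient and reaction parts separately. Using the $L^2$-self-adjointness and idempotency of $Q_{h,k-1}^{\div}$ together with the key inclusion $\mathbb P_{k-1}(K;\mathbb R^d)\subseteq\mathbb{V}_{k-1}^{\rm div}(K)$, the gradient difference rewrites as
\begin{equation*}
(Q_{h,k-1}^{\div}\nabla_h(v_h-u),Q_{h,k-1}^{\div}\nabla_h w_h)-(\nabla u-Q_{h,k-1}^{\div}\nabla u,\nabla_h w_h-Q_{h,k-1}^{\div}\nabla_h w_h),
\end{equation*}
and Cauchy--Schwarz combined with the $L^2$-stability of $Q_{h,k-1}^{\div}$ and the best-approximation inequality $\|\nabla u-Q_{h,k-1}^{\div}\nabla u\|_0\leq\|\nabla u-Q_h^{k-1}\nabla u\|_0$ produces the first two groups on the right of \eqref{eq:errorestimateH1abs}. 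The reaction piece $\alpha[(Q_hv_h,Q_hw_h)-(u,w_h)]$ is handled in the same spirit, with the discrete Poincar\'e inequality \eqref{eq:vempoincareineqlty} converting $\|w_h\|_0$ into $C_p|w_h|_{1,h}$ and the extra term $\|u-Q_hu\|_0$ appearing in the final bound.

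The delicate step, and the one I expect to be the main obstacle, is the consistency error. Elementwise integration by parts combined with $-\Delta u+\alpha u=f$, and the fact that the nonconforming DoFs of $w_h$ vanish on $\partial\Omega$, yield
\begin{equation*}
a(u,w_h)-(f,Q_hw_h)=(f,w_h-Q_hw_h)+\sum_{F\in\mathcal F_h}(\partial_n u,[w_h])_F.
\end{equation*}
Two orthogonalities must be exploited: $w_h-Q_hw_h\perp\mathbb P_k\supseteq\mathbb P_{k-2}$ on every $K$, so $Q_h^{k-2}f$ can be subtracted in the bulk term; and $[w_h]\perp\mathbb P_{k-1}(F)$ on every interior face by the single-valuedness of the nonconforming DoFs \eqref{eq:vemdof1} and their vanishing on $\partial\Omega$, so $Q_{k-1}^F(\partial_n u)$ can be subtracted in the jump term. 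A Bramble--Hilbert estimate, a trace inequality, and the discrete jump bound $\sum_{F\in\mathcal F_h}h_F^{-1}\|[w_h]\|_{0,F}^2\lesssim |w_h|_{1,h}^2$ then yield the consistency bound $\lesssim h^k(|u|_{k+1}+|f|_{k-1})|w_h|_{1,h}$, which matches the last term of \eqref{eq:errorestimateH1abs} and furnishes the $|f|_{k-1}$ contribution appearing in \eqref{eq:errorestimateH1}.

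For the rate estimate \eqref{eq:errorestimateH1}, choose $v_h$ in \eqref{eq:errorestimateH1abs} to be the canonical nonconforming virtual element interpolant of $u$ associated with the DoFs \eqref{eq:vemdof1}--\eqref{eq:vemdof2}, which by standard theory satisfies $|u-v_h|_{1,h}\lesssim h^k|u|_{k+1}$ and $\|u-v_h\|_0\lesssim h^{k+1}|u|_{k+1}$. Combined with the classical projection estimates $\|\nabla u-Q_h^{k-1}\nabla u\|_0\lesssim h^k|u|_{k+1}$ and $\|u-Q_hu\|_0\lesssim h^{k+1}|u|_{k+1}$, the abstract bound collapses to the desired $h^k(|u|_{k+1}+|f|_{k-1})$ rate.
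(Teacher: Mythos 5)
Your proposal is correct and follows essentially the same Strang-type argument as the paper: coercivity of $a_h$ plus the discrete equation, the projection properties of $Q_{h,k-1}^{\div}$ with the inclusion $\mathbb P_{k-1}(K;\mathbb R^d)\subseteq\mathbb{V}_{k-1}^{\rm div}(K)$, the discrete Poincar\'e inequality for the reaction term, and the consistency term kept abstractly in \eqref{eq:errorestimateH1abs}. The only cosmetic difference is that you prove the nonconforming consistency bound directly via elementwise integration by parts and the face/bulk moment orthogonalities, whereas the paper cites the consistency estimate of \cite[Lemma 5.5]{ChenHuang2020ncvem} and only adds the data-oscillation manipulation with $f-Q_hf$; both yield the same $h^k(|u|_{k+1}+|f|_{k-1})$ rate.
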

\begin{proof}
Take any $v_h\in V_h$. 
By the definitions of $a_h(\cdot, \cdot)$ and $a(\cdot, \cdot)$, it follows from the discrete Poincar\'e inequality \eqref{eq:vempoincareineqlty} that
\begin{align*}
&\quad a_h(v_h, v_h-u_h)-a(u,v_h-u_h)\\
&=(Q_{h,k-1}^{\div}\nabla_h v_h, Q_{h,k-1}^{\div}\nabla_h(v_h-u_h))-(\nabla u, \nabla_h(v_h-u_h))\\
&\quad +\alpha(Q_hv_h, Q_h(v_h-u_h))-\alpha(u, v_h-u_h) \\
&=(Q_{h,k-1}^{\div}\nabla_h v_h-\nabla u, \nabla_h(v_h-u_h))+\alpha(Q_hv_h-u,v_h-u_h)\\
&\leq (\|\nabla u-Q_{h,k-1}^{\div}\nabla_h v_h\|_0+\alpha C_p\|u-Q_hv_h\|_0)|v_h-u_h|_{1,h}.
\end{align*}
Apply the coercivity \eqref{eq:ahcoercivity} and \eqref{eq:vem} to get
\begin{equation*}
C_e^{-2}|v_h-u_h|_{1,h}^2\leq a_h(v_h-u_h, v_h-u_h)=a_h(v_h, v_h-u_h)-(f, Q_h(v_h-u_h)).
\end{equation*}
Combining the last two inequalities yields
\begin{align*}
|v_h-u_h|_{1,h}&\leq C_e^2(\|\nabla u-Q_{h,k-1}^{\div}\nabla_h v_h\|_0+\alpha C_p\|u-Q_hv_h\|_0) \\
&\quad + C_e^2\sup_{w_h\in V_h}\frac{a(u,w_h)-(f, Q_hw_h)}{|w_h|_{1,h}}.
\end{align*}
Since $\mathbb P_{k-1}(K;\mathbb R^d)\subseteq\mathbb{V}_{k-1}^{\rm div}(K)$ for $K\in\mathcal T_h$, we have $Q_{h,k-1}^{\div}(Q_h^{k-1}\nabla u)=Q_h^{k-1}\nabla u$. Hence
\begin{align*}    
\|\nabla u-Q_{h,k-1}^{\div}\nabla_h v_h\|_0&\leq \|\nabla u-Q_{h,k-1}^{\div}\nabla u\|_0+\|Q_{h,k-1}^{\div}(\nabla u-\nabla_h v_h)\|_0\\
&= \|\nabla u-Q_h^{k-1}\nabla u-Q_{h,k-1}^{\div}(\nabla u-Q_h^{k-1}\nabla u)\|_0\\
&\quad+\|Q_{h,k-1}^{\div}(\nabla u-\nabla_h v_h)\|_0 \\
&\leq \|\nabla u-Q_h^{k-1}\nabla u\|_0+|u-v_h|_{1,h}.
\end{align*}
Similarly, we have
\[
\|u-Q_hv_h\|_0\leq \|u-Q_hu\|_0+\|Q_h(u-v_h)\|_0\leq \|u-Q_hu\|_0+\|u-v_h\|_0.
\]
Then we obtain from the last three inequalities that
\begin{align*}
|v_h-u_h|_{1,h}&\leq C_e^2\big(\|\nabla u-Q_h^{k-1}\nabla u\|_0+|u-v_h|_{1,h}+\alpha C_p(\|u-Q_hu\|_0+\|u-v_h\|_0)\big) \\
&\quad + C_e^2\sup_{w_h\in V_h}\frac{a(u,w_h)-(f, Q_hw_h)}{|w_h|_{1,h}}.
\end{align*}
Thus, we acquire \eqref{eq:errorestimateH1abs} from the last inequality and the triangle inequality.

Next we derive estimate \eqref{eq:errorestimateH1}. Recall the consistency error estimate in \cite[Lemma 5.5]{ChenHuang2020ncvem}
\[
a(u, w_h)+(f, w_h)\lesssim h^k|u|_{k+1}|w_h|_{1,h}\quad\forall~w_h\in V_h.
\]
Then 
\begin{align*}
a(u, w_h)-(f, Q_hw_h) 
&=a(u, w_h)+(f, w_h)+(f-Q_hf, w_h) \\
&=a(u, w_h)+(f, w_h)+(f-Q_hf, w_h-Q_h^0w_h) \\
&\lesssim h^k(|u|_{k+1}+|f|_{k-1})|w_h|_{1,h}.
\end{align*}
At last, \eqref{eq:errorestimateH1} follows from \eqref{eq:errorestimateH1abs} and the approximation of $V_h$ \cite{ChenHuang2020ncvem}.
\end{proof}

\section{Conforming virtual element method without extrinsic stabilization}\label{sec:stabfreecfmvem}

In this section we will develop a conforming VEM without extrinsic stabilization for the second order elliptic problem \eqref{eq:ellipitc2ndproblem}.

We additionally impose the following condition on the mesh $\mathcal T_h$ in this section:
\begin{itemize}
 \item[(A3)] All the $(d-1)$-dimensional faces of $K\in \mathcal T_h$ are simplices.
% , and all the $(d-1)$-dimensional faces of $K$ are also faces of $\mathcal T_K$.
\end{itemize}

% \begin{equation}\label{eq:ellipitc2ndproblem}
% \begin{cases}
% -\Delta u + \alpha u=f & \textrm{ in } \Omega,\\
% \qquad\qquad u=0&\textrm{ on } \partial\Omega,
% \end{cases}
% \end{equation}
% where $\Omega\subseteq\mathbb R^d$ is a bounded polygon, $f\in L^2(\Omega)$ and $\alpha$ is a positive constant. The weak formulation of problem \eqref{eq:ellipitc2ndproblem} is to find $u\in H_0^1(\Omega)$ such that
% \begin{equation}\label{eq:ellipitc2ndproblemweakform}
% a(u,v)=(f,v)\quad\forall~v\in H_0^1(\Omega),
% \end{equation}
% where the bilinear form $a(u, v):=(\nabla_h u, \nabla_h v)+\alpha(u,v)$ with $\nabla_h$ being the piecewise counterpart of $\nabla$ with respect to $\mathcal T_h$.

% In this section we will construct an $H(\div)$-conforming macro finite element, and then establish a norm equivalence of the gradient of the $H^1$-conforming virtual element space, which is vital in devising a stabilization free virtual element method.

% For polygon $K\subset\mathbb R^2$, let $\mathcal{V}(K)$ be the set of all vertices of $K$. And we overload the notation $\mathcal{E}(K)$ to denote the set of all edges of $K$ in this section.

\subsection{$H^1$-conforming virtual element}

Recall the $H^1$-conforming virtual element in \cite{ChenHuangWei2022,AhmadAlsaediBrezziMariniEtAl2013,BeiraoBrezziCangianiManziniEtAl2013,BeiraoBrezziMariniRusso2014}.
By assumption (A3), we can require the shape function restricted to each face to be a polynomial when defining an $H^1$-conforming virtual element.
To this end, let the space of shape functions be
\begin{align*}
V_k(K)&:=\big\{ v\in H^1(K): \Delta v\in \mathbb P_{k}(K),  v|_{\partial K}\in H^1(\partial K),  v|_F\in \mathbb P_{k}(F)\;\;\forall~F\in\mathcal F(K),\\    
&\qquad\qquad\qquad\qquad\qquad\qquad\qquad\;\;\; \textrm{and } (v-\Pi_k^Kv, q)_K=0\quad\forall~q\in\mathbb P_{k-2}^{\perp}(K)\},
\end{align*}
where $\Pi_k^K$ is defined by \eqref{eq:projlocal2d1}-\eqref{eq:projlocal2d2}.
It holds $\mathbb P_k(K)\subseteq V_k(K)$.
The degrees of freedom are given by
\begin{align}
v(\delta), & \quad \delta\in\Delta_0(K), \label{eq:cfmvemdof1}\\
\frac{1}{|e|}(v, \phi_i^e)_e, & \quad e\in\Delta_j(K), i=1,\ldots, \dim\mathbb P_{k-j-1}(e), j=1,\ldots, d-1, \label{eq:cfmvemdof2}\\
\frac{1}{|K|}(v, \phi_i^K)_K, & \quad i=1,\ldots, \dim\mathbb P_{k-2}(K), \label{eq:cfmvemdof3}
\end{align}
where $\{\phi_i^e\}_{i=1}^{\dim\mathbb P_{k-j-1}(e)}$ is a basis of space $\mathbb P_{k-j-1}(e)$, and $\{\phi_i^K\}_{i=1}^{\dim\mathbb P_{k-2}(K)}$ a basis of space $\mathbb P_{k-2}(K)$.

For $v\in V_k(K)$, the $H^1$ projection $\Pi_k^Kv$ and the $L^2$ projection $Q_k^Kv= \Pi_k^Kv + Q_{k-2}^Kv-Q_{k-2}^K\Pi_k^Kv$ are computable using the DoFs \eqref{eq:cfmvemdof1}-\eqref{eq:cfmvemdof3}. 
Following the argument in \cite[Lemma 4.7]{ChenHuangWei2022} and \cite{ChenHuang2018,BrennerSung2018,BeiraodaVeigaLovadinaRusso2017},
we have the norm equivalence of space $V_k(K)$, that is for $v\in V_k(K)$, it holds
\begin{equation}\label{eq:cfmVknormequivalence}
h_K^2|v|_{1,K}^2\lesssim\|v\|_{0,K}^2\eqsim \|Q_{k-2}^Kv\|_{0,K}^2 + h_K\|v\|_{0,\partial K}^2.
\end{equation}

Employing the same argument as in Lemma~\ref{lem:gradVknormequivalence}, from  \eqref{eq:cfmVknormequivalence}, we get the norm equivalence
\begin{equation*}%\label{eq:cfmgradVknormequivalence}  
\|Q_{K,k}^{\div}\nabla v\|_{0,K}\eqsim \|\nabla v\|_{0,K} \quad \forall~v\in V_k(K).
\end{equation*}

\begin{remark}\rm
When $k\geq2$, we can replace $Q_{K,k}^{\div}$ by the $L^2$-orthogonal projection operator onto space $\mathbb{V}_{k,k-2}^{\rm div}(K)$, where
\begin{align*}
\mathbb{V}_{k,k-2}^{\rm div}(K):=&\{\boldsymbol{\phi}\in\mathbb{V}_{k}^{\rm div}(K): \div\boldsymbol{\phi}\in\mathbb P_{k-2}(K)\} \\
=&\{\boldsymbol{\phi}\in\boldsymbol{V}_{k}^{\mathrm{BDM}}(K): \div\boldsymbol{\phi}\in\mathbb P_{k-2}(K), \boldsymbol{\phi}\cdot\boldsymbol{n}|_F\in\mathbb P_{k}(F)\;\;\forall~F\in\mathcal F(K)\}.
\end{align*}
\end{remark}

\subsection{Discrete method}
Define the global conforming virtual element space
\[
V_h:=\{v_h\in H_0^1(\Omega): v_h|_K\in V_k(K) \textrm{ for each } K\in\mathcal T_h\}.
\]

Based on the weak formulation \eqref{eq:ellipitc2ndproblemweakform}, we propose a virtual element method without extrinsic stabilization for problem \eqref{eq:ellipitc2ndproblem} as follows: find $u_h\in V_h$ such that
\begin{equation}\label{eq:cfmvem}
a_h(u_h, v_h)=(f, Q_hv_h)\quad\forall~v_h\in V_h,
\end{equation} 
where the discrete bilinear form 
\[
a_h(u_h, v_h):=(Q_{h,k}^{\div}\nabla u_h, Q_{h,k}^{\div}\nabla v_h)+\alpha(Q_hu_h, Q_hv_h).
\]
The VEM \eqref{eq:cfmvem} is uni-solvent.

% It is obvious that 
% \begin{equation}\label{eq:cfmahbounedness}
% a_h(u, v)\lesssim |u|_1|v|_1\quad\forall~u,v\in H_0^1(\Omega).
% \end{equation}   
% And the norm equivalence \eqref{eq:cfmgradVknormequivalence} implies
% the coercivity
% \begin{equation}\label{eq:cfmahcoercivity}
% |v_h|_1^2\lesssim a_h(v_h, v_h)\quad\forall~v_h\in V_h.
% \end{equation}  
% Therefore the stabilization-free virtual element method \eqref{eq:cfmvem} is uni-solvent.

% \begin{remark}\rm
By introducing $\boldsymbol{\phi}_h=Q_{h,k}^{\div}\nabla u_h$, the VEM \eqref{eq:cfmvem} can be rewritten as the following primal mixed VEM: find $\boldsymbol{\phi}_h\in\mathbb{V}_{h,k}^{\rm div}$ and $u_h\in V_h$ such that
\begin{equation*}
\begin{aligned}
(\boldsymbol{\phi}_h, \boldsymbol{\psi}_h) - (\boldsymbol{\psi}_h, \nabla u_h)&=0 \quad\quad\quad\quad\;\;\forall~\boldsymbol{\psi}_h\in\mathbb{V}_{h,k}^{\rm div},\\
(\boldsymbol{\phi}_h, \nabla v_h)+\alpha(Q_hu_h, Q_hv_h)&=(f, Q_hv_h)\quad\forall~v_h\in V_h.
\end{aligned} 
\end{equation*} 
% \end{remark}

% \begin{lemma}
% It holds the coercivity
% \begin{equation}\label{eq:cfmahcoercivity}
% |v_h|_1^2\lesssim a_h(v_h, v_h)\quad\forall~v_h\in V_h.
% \end{equation}  
% \end{lemma}
% \begin{proof}
% Due to \eqref{eq:cfmgradVknormequivalence}, we have
% \[
% \sum_{K\in\mathcal T_h}\|\nabla v_h\|_{0,K}^2\lesssim \sum_{K\in\mathcal T_h}\|Q_{K,k}^{\div}\nabla v_h\|_{0,K}^2\leq a_h(v_h, v_h)\quad\forall~v_h\in V_h,
% \]
% which implies the coercivity \eqref{eq:cfmahcoercivity}.
% \end{proof}

% \begin{theorem}
% The stabilization free virtual element method \eqref{eq:cfmvem} is uni-solvent.
% \end{theorem}
% \begin{proof}
% Thanks to the boundedness \eqref{eq:cfmahbounedness} and the coercivity \eqref{eq:cfmahcoercivity}, we conclude the result from the Lax-Milgram lemma \cite{LaxMilgram1954}.
% \end{proof}

% \subsection{Error analysis}
Applying the standard error analysis for VEMs, we have the following error estimate for the VEM \eqref{eq:cfmvem}.
\begin{theorem}\label{thm:cfmerrorestimateH1}
Let $u\in H_0^1(\Omega)$ be the solution of problem \eqref{eq:ellipitc2ndproblem}, and $u_h\in V_h$ be the solution of the VEM \eqref{eq:cfmvem}. Assume $u\in H^{k+1}(\Omega)$ and $f\in H^{k-1}(\Omega)$. Then
\begin{equation*}%\label{eq:cfmerrorestimateH1}
|u-u_h|_1\lesssim h^k(|u|_{k+1}+|f|_{k-1}).
\end{equation*}
\end{theorem}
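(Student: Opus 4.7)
The plan is to mirror the proof of Theorem~\ref{thm:errorestimateH1}, simplified by the conformity $V_h\subset H_0^1(\Omega)$. First, I would record that $a_h(\cdot,\cdot)$ is bounded on $H_0^1(\Omega)+V_h$ and coercive on $V_h$: $L^2$-stability of $Q_{h,k}^{\div}$ and $Q_h$ together with the Poincar\'e inequality give the boundedness $a_h(u_h,v_h)\lesssim |u_h|_1|v_h|_1$, while the norm equivalence $\|Q_{K,k}^{\div}\nabla v\|_{0,K}\eqsim\|\nabla v\|_{0,K}$ stated just after \eqref{eq:cfmVknormequivalence} yields $C_e^{-2}|v_h|_1^2\leq a_h(v_h,v_h)$. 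Well-posedness of \eqref{eq:cfmvem} is then a consequence of Lax--Milgram.

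Next, for any $v_h\in V_h$ set $w_h:=u_h-v_h\in V_h$. Coercivity and \eqref{eq:cfmvem} give
\begin{equation*}
C_e^{-2}|w_h|_1^2\leq a_h(w_h,w_h)=(f,Q_hw_h)-a_h(v_h,w_h).
\end{equation*}
Since $w_h\in V_h\subset H_0^1(\Omega)$, the conformity identity $(f,w_h)=a(u,w_h)$ is available, so the right-hand side splits into a consistency bracket $a(u,w_h)-a_h(v_h,w_h)$ and a data-residual term $(f,Q_hw_h-w_h)$. The bracket is handled exactly as in Theorem~\ref{thm:errorestimateH1}: using that $Q_{h,k}^{\div}$ is $L^2$-self-adjoint with $Q_{h,k}^{\div}Q_{h,k}^{\div}=Q_{h,k}^{\div}$, together with the inclusion $\mathbb P_k(K;\mathbb R^d)\subseteq\mathbb{V}_{k}^{\div}(K)$ (so that $Q_{h,k}^{\div}(Q_h^k\nabla u)=Q_h^k\nabla u$), inserting $Q_h^k\nabla u$ and $Q_hu$ bounds it by
\[
|w_h|_1\bigl(\|\nabla u-Q_h^k\nabla u\|_0+|u-v_h|_1+\alpha C_p\|u-Q_hu\|_0+\alpha C_p\|u-v_h\|_0\bigr).
\]

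The only genuinely new ingredient is the residual $(f,Q_hw_h-w_h)$. Here I would exploit the local $L^2$-orthogonality $w_h-Q_hw_h\perp\mathbb P_k(K)$ on each $K\in\mathcal T_h$ to rewrite $(f,Q_hw_h-w_h)=(f-Q_h^{k-1}f,Q_hw_h-w_h)$, and then combine the polynomial approximation $\|f-Q_h^{k-1}f\|_{0,K}\lesssim h_K^{k-1}|f|_{k-1,K}$ with the Poincar\'e-type estimate $\|w_h-Q_hw_h\|_{0,K}\lesssim h_K|w_h|_{1,K}$, which follows from $\mathbb P_k(K)\subseteq V_k(K)$ and the norm equivalence \eqref{eq:cfmVknormequivalence}. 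This produces $(f,Q_hw_h-w_h)\lesssim h^k|f|_{k-1}|w_h|_1$.

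Collecting the three bounds, dividing by $|w_h|_1$, applying the triangle inequality, and taking the infimum over $v_h\in V_h$, I would close the argument with the standard approximation estimate $\inf_{v_h\in V_h}(|u-v_h|_1+h^{-1}\|u-v_h\|_0)\lesssim h^k|u|_{k+1}$ for the $H^1$-conforming VEM space (see \cite{ChenHuangWei2022,BeiraodaVeigaLovadinaRusso2017}), together with the polynomial approximation $\|\nabla u-Q_h^k\nabla u\|_0\lesssim h^k|u|_{k+1}$ and $\|u-Q_hu\|_0\lesssim h^{k+1}|u|_{k+1}$. The only delicate step is controlling the residual $(f,Q_hw_h-w_h)$ under the weak regularity $f\in H^{k-1}$; the orthogonality trick described above is essential to avoid the stronger hypothesis $f\in H^k$.
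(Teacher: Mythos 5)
Your argument is correct and is essentially what the paper intends: the paper dispatches Theorem~\ref{thm:cfmerrorestimateH1} with the remark that the ``standard error analysis'' applies, which here means precisely the conforming specialization of the Theorem~\ref{thm:errorestimateH1} argument that you carry out (coercivity from the norm equivalence, boundedness, insertion of $Q_h^k\nabla u$ and $Q_hu$ using $\mathbb P_k(K;\mathbb R^d)\subseteq\mathbb{V}_{k}^{\rm div}(K)$, and the orthogonality trick to bound $(f,Q_hw_h-w_h)$ by $h^k|f|_{k-1}|w_h|_1$, with the nonconformity consistency term now absent since $V_h\subset H_0^1(\Omega)$). One minor remark: the estimate $\|w_h-Q_hw_h\|_{0,K}\lesssim h_K|w_h|_{1,K}$ is just the Bramble--Hilbert/Poincar\'e bound for the $L^2$ projection on a star-shaped element (assumption (A1)) valid for any $H^1$ function, and needs neither $\mathbb P_k(K)\subseteq V_k(K)$ nor the norm equivalence \eqref{eq:cfmVknormequivalence}.
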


\section{Numerical results}\label{sec:numericalexamps}
In this section, we will numerically test the nonconforming virtual element method \eqref{eq:vem} and the conforming
virtual element method \eqref{eq:cfmvem}, which are abbreviated as SFNCVEM and SFCVEM respectively.
For the convenience of narration, we also abbreviate the standard conforming virtual element
method in \cite{BeiraoBrezziCangianiManziniEtAl2013} and non-conforming virtual element method  in \cite{CangianiManziniSutton2017} as CVEM and NCVEM respectively. 
We implement all the experiments by using the FEALPy package \cite{fealpy} on a PC with AMD Ryzen
5 3500U CPU and 64-bit Ubuntu 22.04 operating system.
Set the rectangular domain $\Omega = (0, 1)\times(0, 1)$.

\subsection{Verification of convergence}
% \begin{example}\label{ex}
Consider the second order elliptic problem~\eqref{eq:ellipitc2ndproblem} with $\alpha = 2$.
The exact solution and source term are given by
\[
u = \sin(\pi x)\sin(\pi y), \quad f = (2\pi^2+2)\sin(\pi x)\sin(\pi y).
\]
% \end{example}

The rectangular domain $\Omega$ is partitioned by the convex polygon mesh
$\mathcal T_0$ and non-convex polygon mesh $\mathcal T_1$ in Fig. \ref{fig:mesh}, respectively. 
We choose $k = 1, 2, 5$ in both SFNCVEM and SFCVEM.
The numerical results of the SFNCVEM on meshes $\mathcal T_0$ and $\mathcal T_1$ are shown in Fig.~\ref{fig:rate1}. We can see that $\|u - Q_h u_h\|_0=O(h^{k+1})$ and $\|\nabla u - Q_{h, k-1}^{\div}\nabla_h u_h\|_0=O(h^{k})$, which coincide with Theorem~\ref{thm:errorestimateH1}.
And the numerical results of the SFCVEM are presented in Fig.~\ref{fig:rate2}. Again $\|u - Q_h u_h\|_0=O(h^{k+1})$ and $\|\nabla u - Q_{h, k}^{\div}\nabla u_h\|_0=O(h^{k})$, which confirm the
theoretical convergence rate in Theorem~\ref{thm:cfmerrorestimateH1}.
\begin{figure}[htbp]
\subfigure[Convex polygon mesh $\mathcal T_0$.]{
\begin{minipage}[t]{0.425\linewidth}
\centering
\includegraphics*[width=4.7cm]{./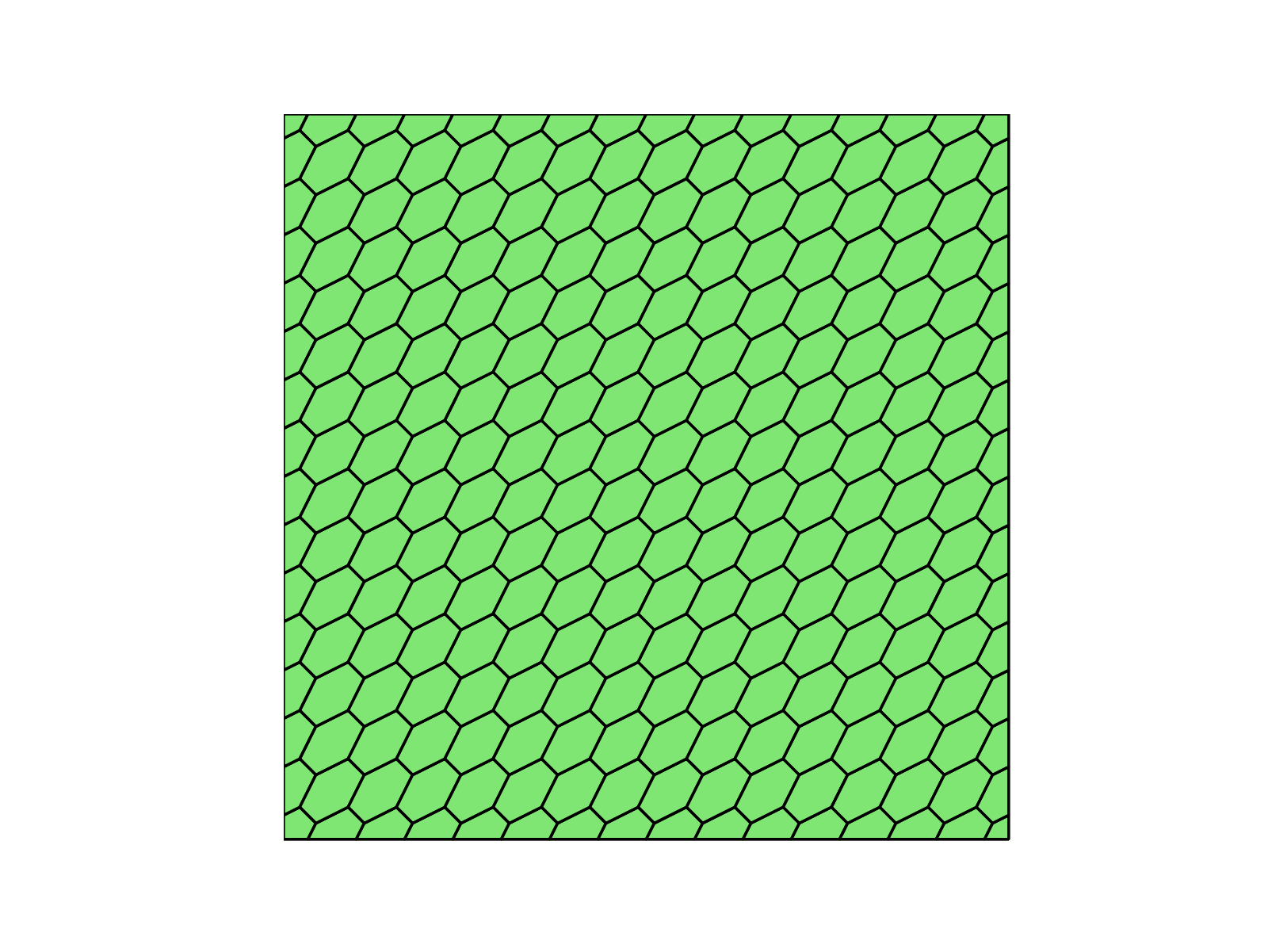}
\end{minipage}}%% 
\quad \quad
\subfigure[Non-convex polygon mesh 
$\mathcal T_1$.]
{\begin{minipage}[t]{0.425\linewidth}
\centering
\includegraphics*[width=4.7cm]{./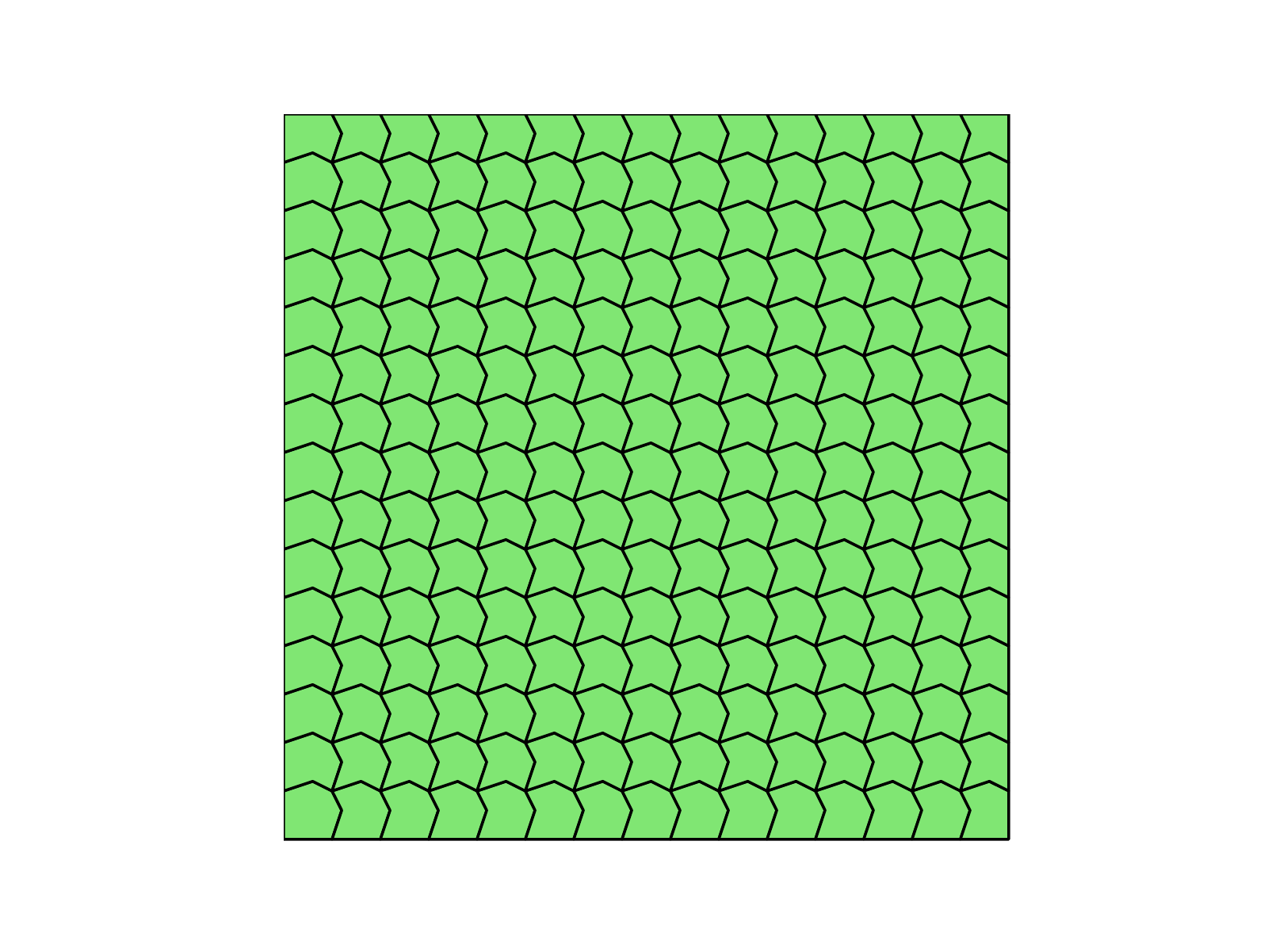}
\end{minipage}}
\caption{Convex polygon mesh and non-convex polygon mesh.}
\label{fig:mesh}
\end{figure}
\begin{figure}[htbp]
\centering
\begin{minipage}[t]{0.49\linewidth}
\centering
\includegraphics[width=5.5cm]{./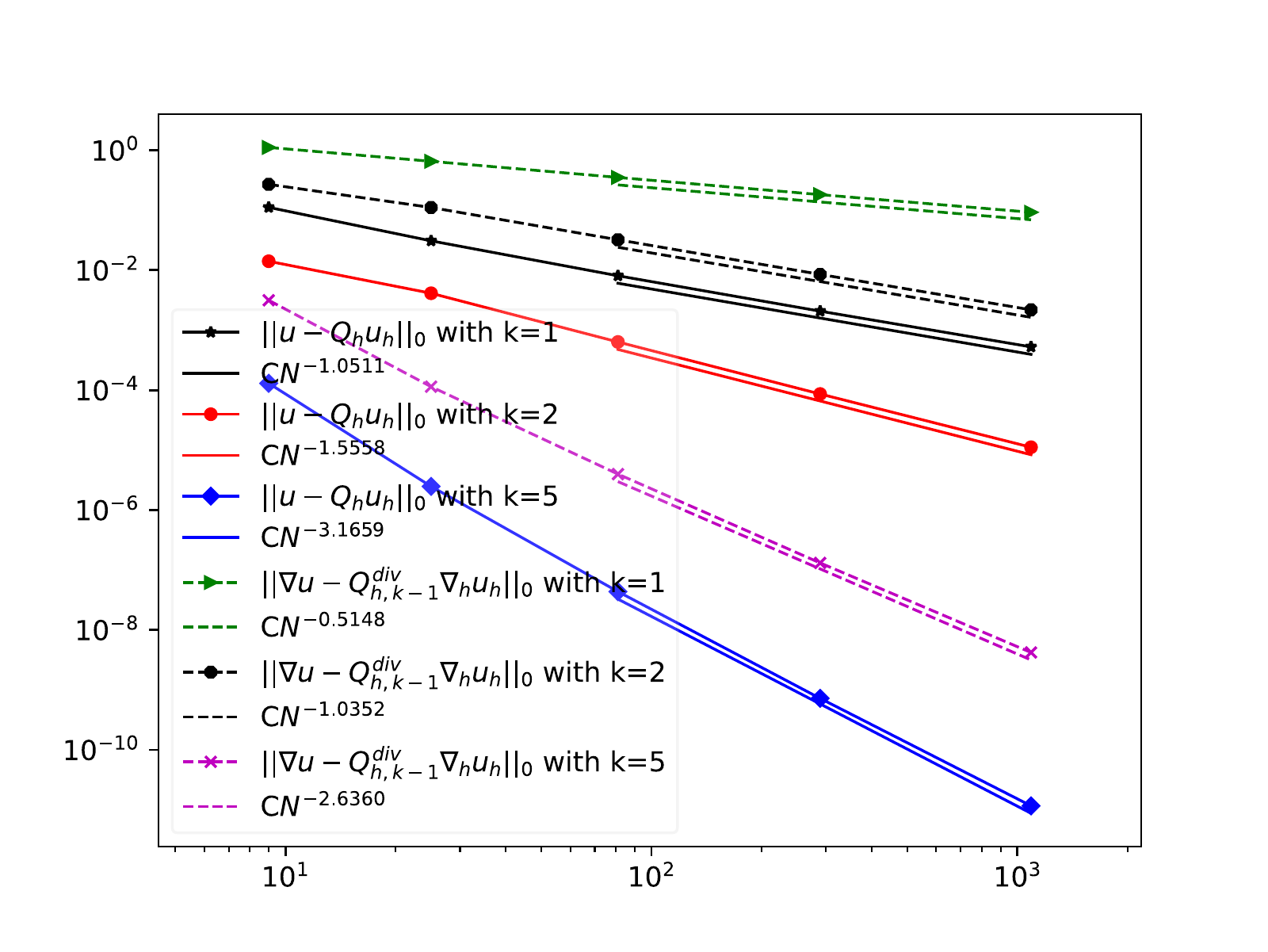}
%\caption{Numberical result of example with $\mathcal T_0$.}
\end{minipage}%
\begin{minipage}[t]{0.49\linewidth}
\centering
\includegraphics[width=5.5cm]{./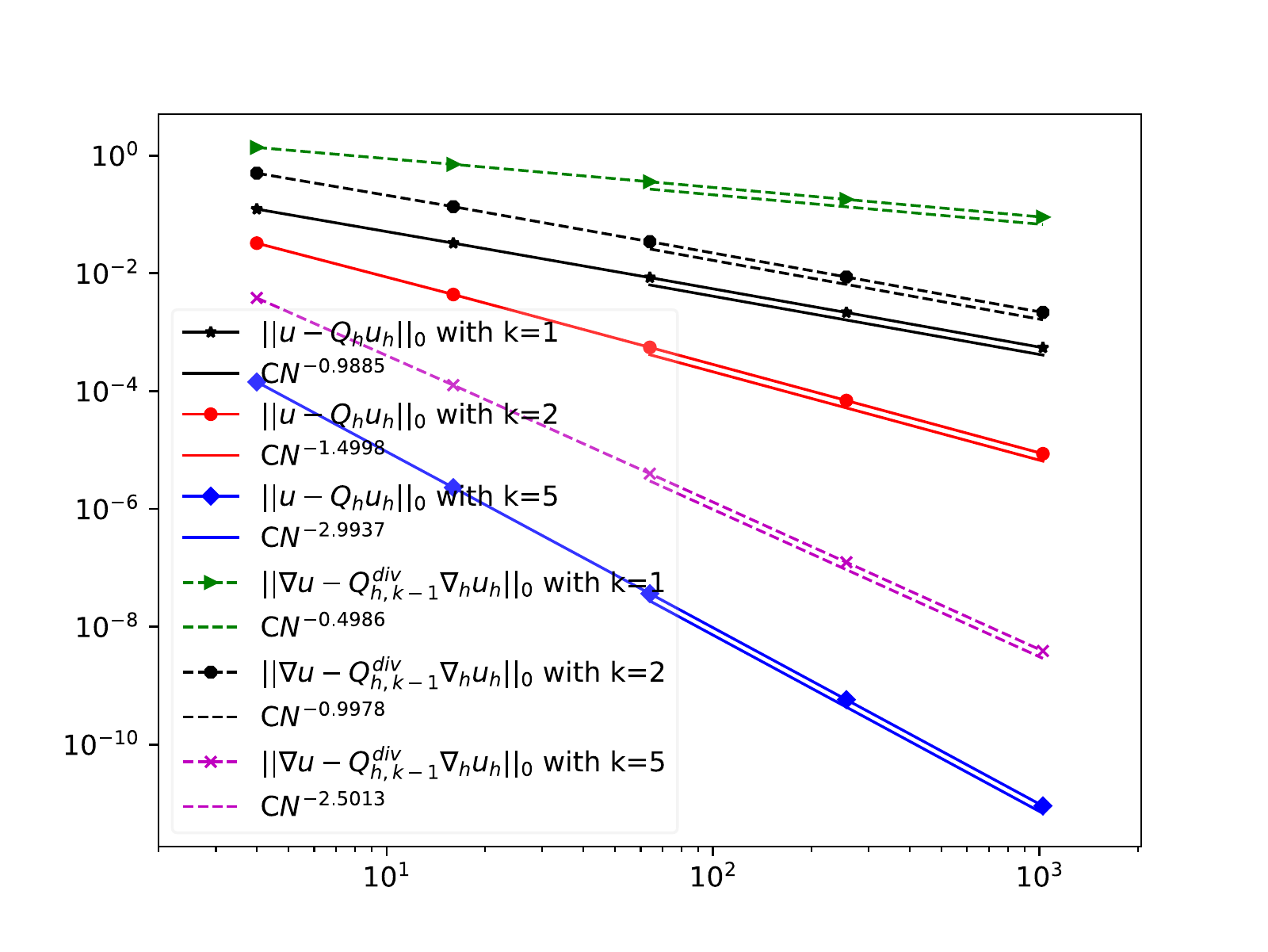}
%\caption{[Numberical result of example with $\mathcal T_1$].}
\end{minipage}%
\centering
\caption{Errors $\|u - Q_h u_h\|_0$ and $\|\nabla u - Q_{h, k-1}^{\div}\nabla_h u_h\|_0$ 
of nonconforming VEM \eqref{eq:vem} on 
$\mathcal T_0$(left) and $\mathcal T_1$(right) with $k=1, 2, 5$.}
\label{fig:rate1}
\end{figure}

\begin{figure}[htbp]
\centering
\begin{minipage}[t]{0.49\linewidth}
\centering
\includegraphics[width=5.5cm]{./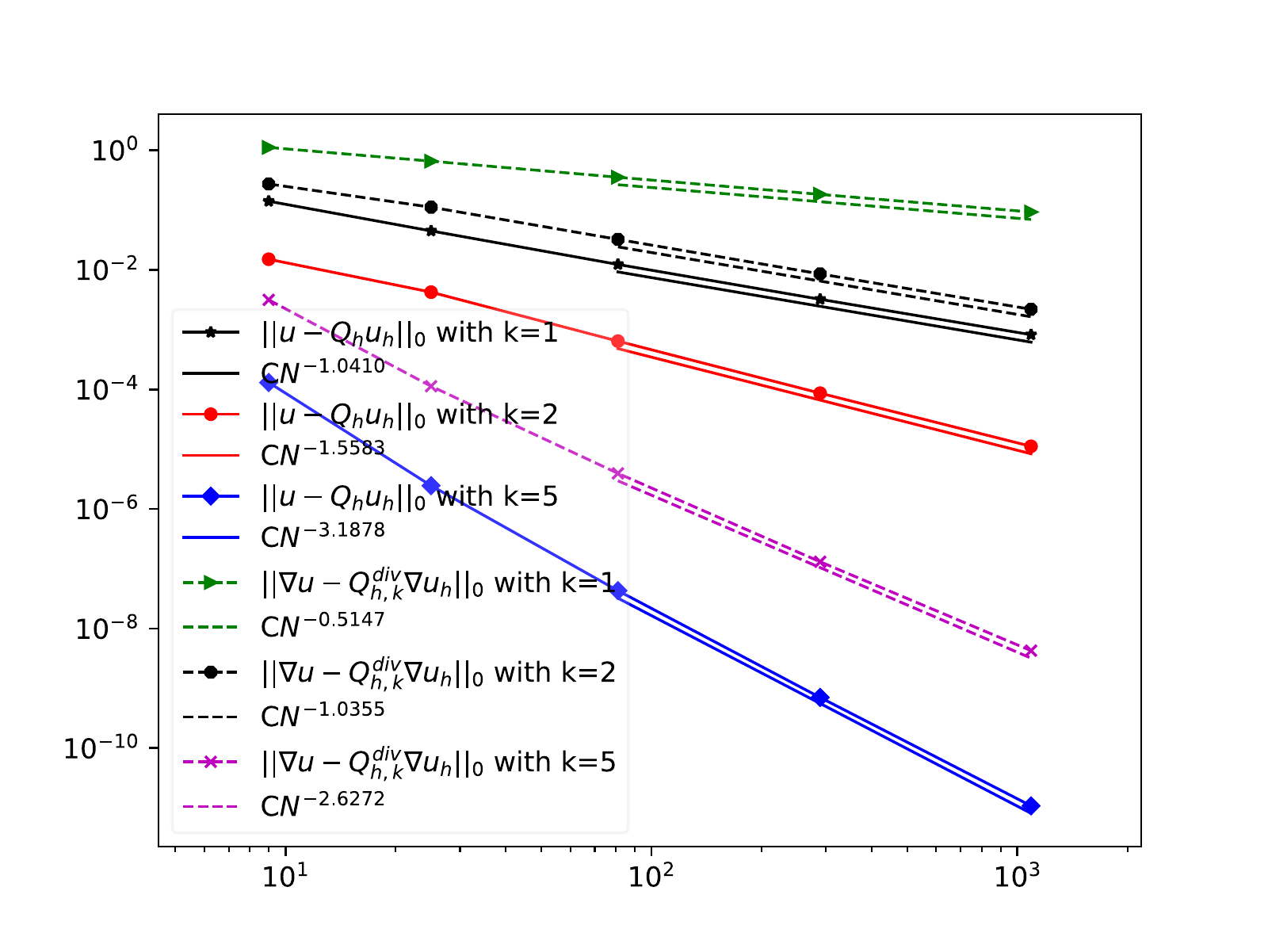}
%\caption{Numberical result of example with $\mathcal T_0$.}
\end{minipage}%
\begin{minipage}[t]{0.49\linewidth}
\centering
\includegraphics[width=5.5cm]{./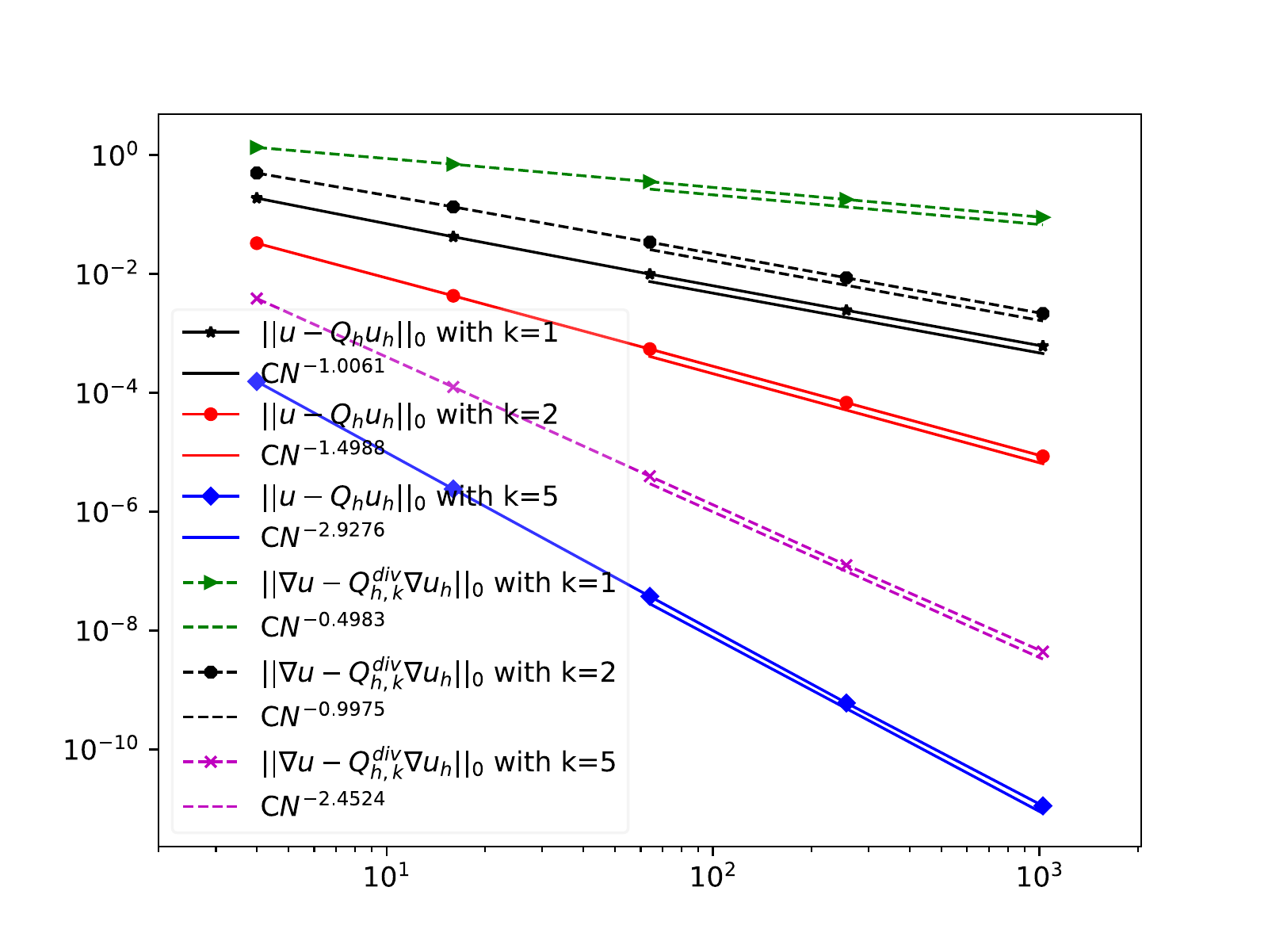}
%\caption{[Numberical result of example with $\mathcal T_1$].}
\end{minipage}%
\centering
\caption{Errors $\|u - Q_h u_h\|_0$ and $\|\nabla u - Q_{h, k}^{\div}\nabla u_h\|_0$ 
of conforming VEM \eqref{eq:cfmvem} on $\mathcal T_0$(left) and 
$\mathcal T_1$(right) with $k=1, 2, 5$.}
\label{fig:rate2}
\end{figure}

\subsection{The invertibility of the local stiffness matrices}

We construct three different hexagons shown in Fig. \ref{fig:hexagon}, and calculate the eigenvalues of local stiffness matrices with $k=3$ for four virtual element methods. Our numerical results show that, on all the three hexagons, both SFNCVEM and SFCVEM have only one zero eigenvalue. In Tables \ref{tab:comparison0}-\ref{tab:comparison2}, we also present the minimum non-zero eigenvalue, the maximum eigenvalue and the condition number for the local stiffness matrix on different hexagons in Fig. \ref{fig:hexagon}, from which we can see that these quantities are comparable for four virtual element methods.
% \begin{figure}[htbp]
% \centering
% \subfigure{\includegraphics[width=1.4in]{./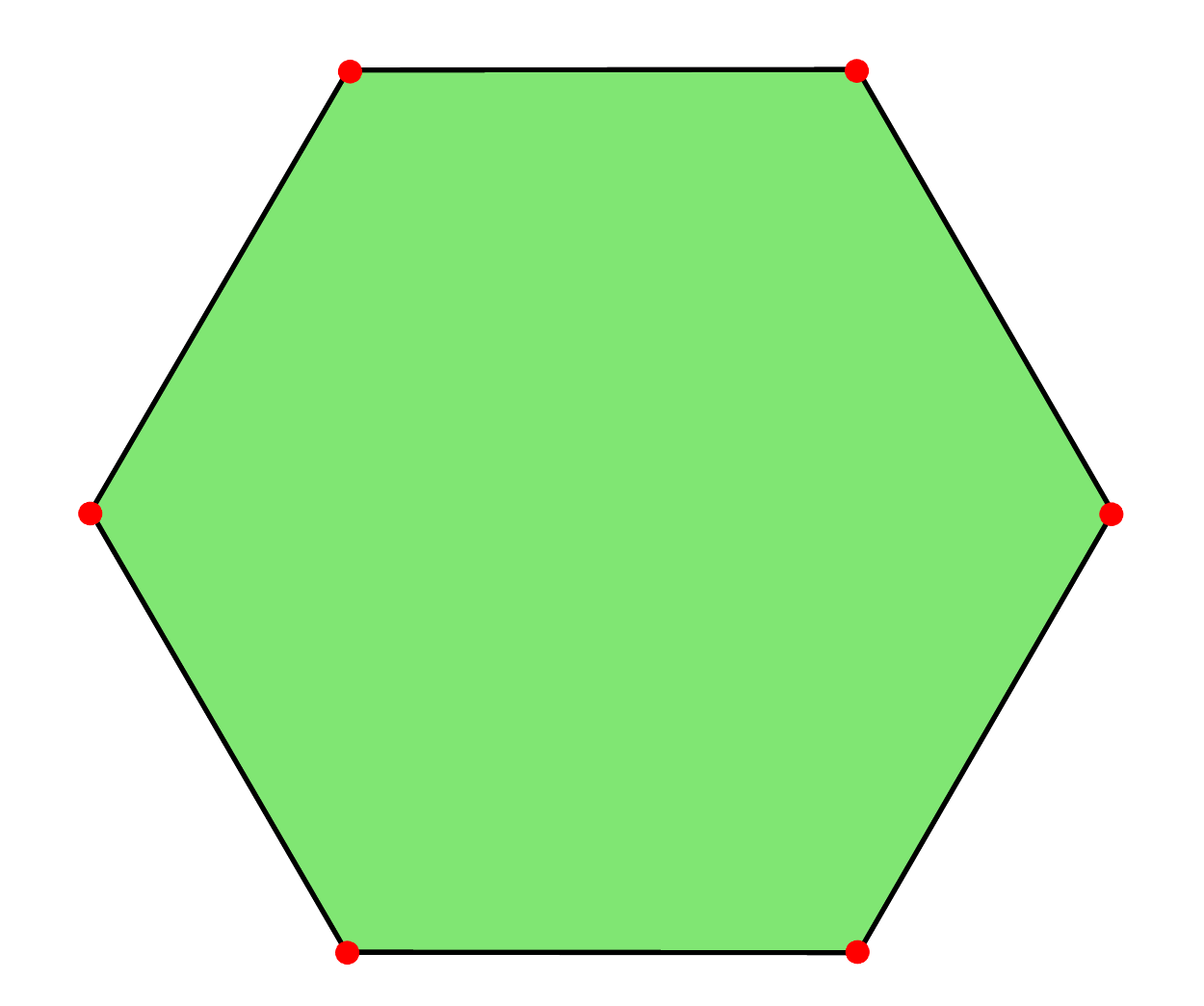}}
%     % \caption{Type I:5 tetrahedra}
% %%
% \subfigure{\includegraphics[width=1.4in]{./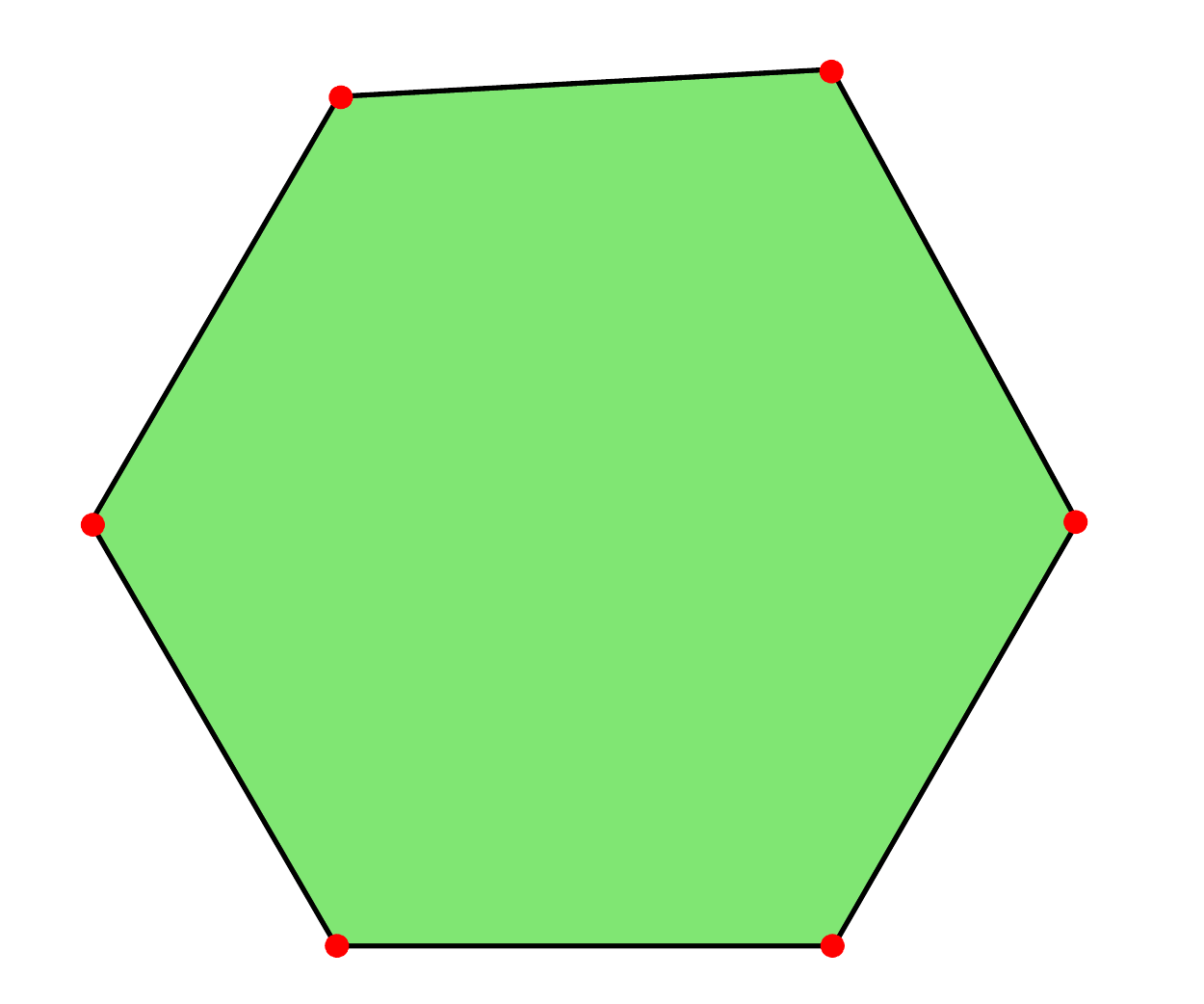}}
%      %\caption{Type II: 24 tetrahedra}
% %%
% \subfigure{\includegraphics[width=1.2in]{./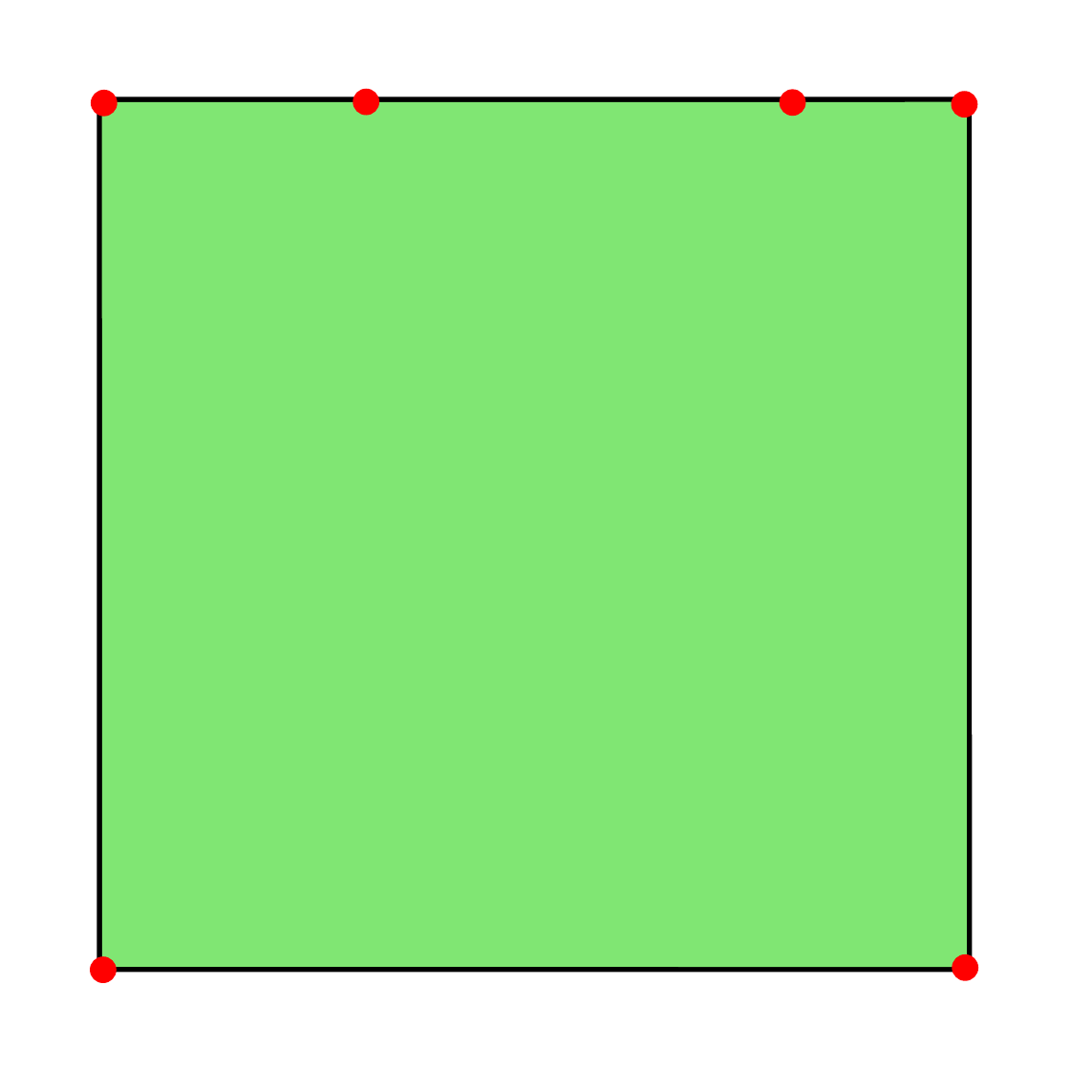}}
%      %\caption{Trirectangular tetrahedron}
% %%
% \caption{The regular hexagon(Left), the quasi-regular hexagon generated by regular
%     hexagon with a small perturbation (Middle), 
% and the square with two hanging nodes (Right)}
%   \label{fig:hexagon0} %% label for entire figure
% \end{figure}
\begin{figure}[htbp]


\subfigure[Regular hexagon.]{
\begin{minipage}[t]{0.3\linewidth}
\centering
\includegraphics*[width=1in]{./figures/hexagon0.pdf}
\end{minipage}}%% 
\;\;% \quad
\subfigure[Quasi-regular hexagon generated by regular
    hexagon with a small perturbation.]
{\begin{minipage}[t]{0.3\linewidth}
\centering
\includegraphics*[width=1in]{./figures/hexagon1.pdf}
\end{minipage}}
\;\;% \quad
\subfigure[Square with two hanging nodes.]
{\begin{minipage}[t]{0.35\linewidth}
\centering
\includegraphics*[width=0.9in]{./figures/hexagon2.pdf}
\end{minipage}}
\caption{The regular hexagon(Left), the quasi-regular hexagon generated by regular
    hexagon with a small perturbation (Middle), 
and the square with two hanging nodes (Right).}
\label{fig:hexagon}
\end{figure}
\begin{table}[htbp]
\centering
\caption{Comparison of eigenvalues and condition numbers on the regular hexagon.}
\label{tab:comparison0}
\begin{tabular}{c|p{3cm}<{\centering}p{3.5cm}<{\centering}p{3cm}<{\centering}}
\hline
\textbf{Method} & \textbf{Maximum eigenvalue} & \textbf{Minimum nonzero
eigenvalue} & \textbf{Condition number} \\ \hline
NCVEM & 975.5693189 & 0.309674737 & 3150.303211 \\ \hline
CVEM & 1012.488116 & 0.297206358 & 3406.683909 \\ \hline
SFNCVEM & 992.5956147 & 0.318932029 & 3112.248147 \\ \hline
SFCVEM & 1011.173331 & 0.298509692 & 3387.405362 \\
\hline
\end{tabular}
\end{table}
\begin{table}[htbp]
\centering
\caption{Comparison of eigenvalues and condition numbers on the quasi-regular hexagon.}
\label{tab:comparison1}
\begin{tabular}{c|p{3cm}<{\centering}p{3.5cm}<{\centering}p{3cm}<{\centering}}
\hline
\textbf{Method} & \textbf{Maximum eigenvalue} & \textbf{Minimum nonzero
eigenvalue} & \textbf{Condition number} \\ \hline
NCVEM   & 935.2883848 & 0.279027715 & 3351.955143 \\ \hline
CVEM    & 1014.672395 & 0.257370621 & 3942.456177 \\ \hline
SFNCVEM & 997.4831245 & 0.282126359 & 3535.589964 \\ \hline
SFCVEM  & 1047.876056 & 0.258970708 &
4046.311124 \\
\hline
\end{tabular}
\end{table}
\begin{table}[htbp]
\centering
\caption{Comparison of eigenvalues and condition numbers on the square with two hanging nodes.}
\label{tab:comparison2}
\begin{tabular}{c|p{3cm}<{\centering}p{3.5cm}<{\centering}p{3cm}<{\centering}}
\hline
\textbf{Method} & \textbf{Maximum eigenvalue} & \textbf{Minimum nonzero
eigenvalue} & \textbf{Condition number} \\ \hline
 NCVEM   & 941.8571938&	0.21069027	&4470.340249 \\ \hline
 CVEM    & 1046.755495&	0.200435123	&5222.4155 \\ \hline
 SFNCVEM & 986.5963357&	0.212761106	&4637.108513 \\ \hline
 SFCVEM  & 1061.651989&	0.202074633	&5253.761808 \\
\hline
\end{tabular}
\end{table}

\subsection{Comparison of assembling time}
The only difference between the standard VEMs and the VEMs without extrinsic stabilization is the
stiffness matrix, so we compare the time consumed in assembling the stiffness matrix of
four different VEMs in detail by varying the degree $k$ and the mesh size $h$
respectively. We use the mesh in Fig.~\ref{fig:mesh}(a) for this experiment.
The results presented in Tables
\ref{tab:ptime} and \ref{tab:ttime} show that NCVEM, CVEM and SFNCVEM have similar assembling time. 
However, SFCVEM requires more time due to the projection onto the one-order higher polynomial space.
\begin{table}[htbp]
\caption{Time consumed in assembling stiffness matrix of four VEMs with
 $h=0.2$ and different $k$.}
\label{tab:ptime}
\centering
\begin{tabular}{c|cccc}
\hline
$k$ & 2 & 4 & 8 & 10 \\
\hline
SFCVEM & 0.053684235 & 0.144996881 & 1.468627453 & 2.603836536 \\
SFNCVEM & 0.022516727 & 0.065697193 & 0.806378841 & 1.554260015 \\
CVEM & 0.021185875 & 0.059809923 & 0.600241184 & 1.160929918 \\
NCVEM & 0.0213027 & 0.061014891 & 0.596506596 & 1.129639149 \\
\hline
\end{tabular}
\end{table}
\begin{table}[htbp]
\caption{Time consumed in assembling stiffness matrix of four VEMs with $k = 5$ and different $h$.}
\label{tab:ttime}
\centering
\begin{tabular}{c|cccc}
\hline
$h$ &    1	& 0.25 & 0.0625 & 0.03125\\
\hline
SFCVEM & 0.039689541 & 0.199015379	& 1.74412179	& 4.75462532\\
SFNCVEM & 0.018287182 & 0.100006819	& 0.81251812	& 2.465409517\\
CVEM & 0.018686771 & 0.087426662	& 0.781031132	& 1.983617783\\
NCVEM & 0.018309593 & 0.096345425	& 0.767129898	& 2.159288645\\
\hline
\end{tabular}
\end{table}

\subsection{Condition number of the stiffness
matrix}
\begin{figure}[htbp]
\centering
\subfigure{\includegraphics[width=1in]{./figures/hexagon0.pdf}}
    % \caption{Type I:5 tetrahedra}
%%
\subfigure{\includegraphics[width=1in]{./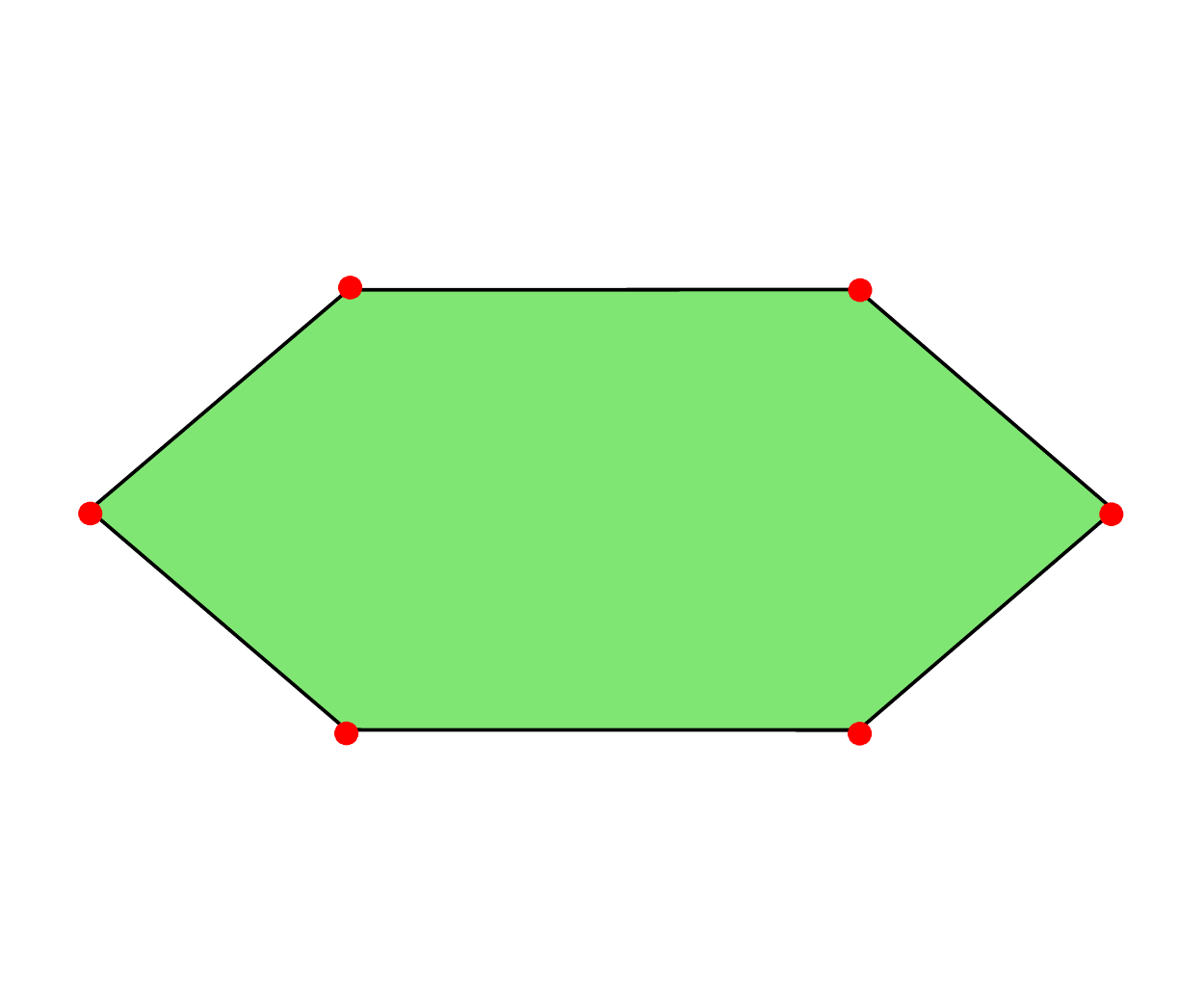}}
     %\caption{Type II: 24 tetrahedra}
%%
\subfigure{\includegraphics[width=1in]{./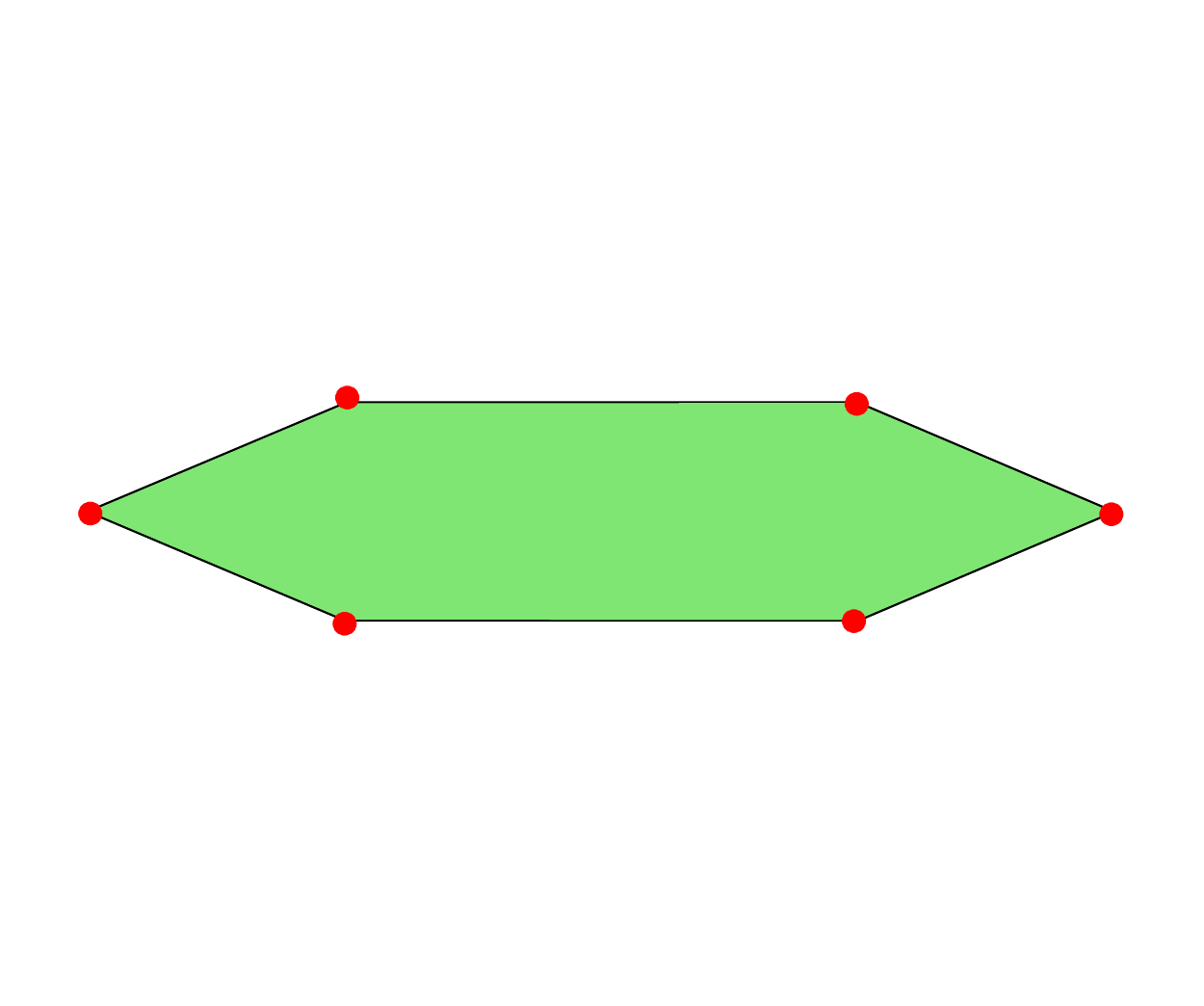}}
     %\caption{Trirectangular tetrahedron}
%%
\caption{The hexagons $H_0, H_1, H_2$.}
  \label{fig:collapsehexagon} %% label for entire figure
\end{figure} 
We design two experiments to check the condition number of the stiffness
matrices of the four VEMs. 

Firstly, we refer to the ``collapsing polygons'' experiment in
\cite{Mascotto2018} and consider a sequence of hexagons
$\{H_i\}_{i=0}^{\infty}$, where the vertices of $H_i$ are given by
$A_i = (1, 0)$, $B_i = (0.5, a_i)$, $C_i = (-0.5, a)$, $D_i=(-1, 0)$, 
$E_i = (-0.5, -a_i)$, and $F_i = (0.5, -a_i)$,
where $a_i = \frac{\sqrt{3}}{2^{i+1}}$. The hexagons $H_0$, $H_1$ and $H_2$ are drawn
in Fig.~\ref{fig:collapsehexagon}. 
% We construct the virtual element space on $H_i$ and calculate the condition number
% of the stiffness matrix of four methods. 
As shown in Fig.~\ref{fig:collapsehexagon_conditionnumber} for $k=8$ and $k=10$, the condition numbers of stiffness
matrices of the VEMs without extrinsic stabilization are smaller than those of the standard methods when $i$ is large.
\begin{figure}[htbp]
\subfigure[$k = 8$]{
\begin{minipage}[t]{0.45\linewidth}
\centering
\includegraphics*[width=2in]{./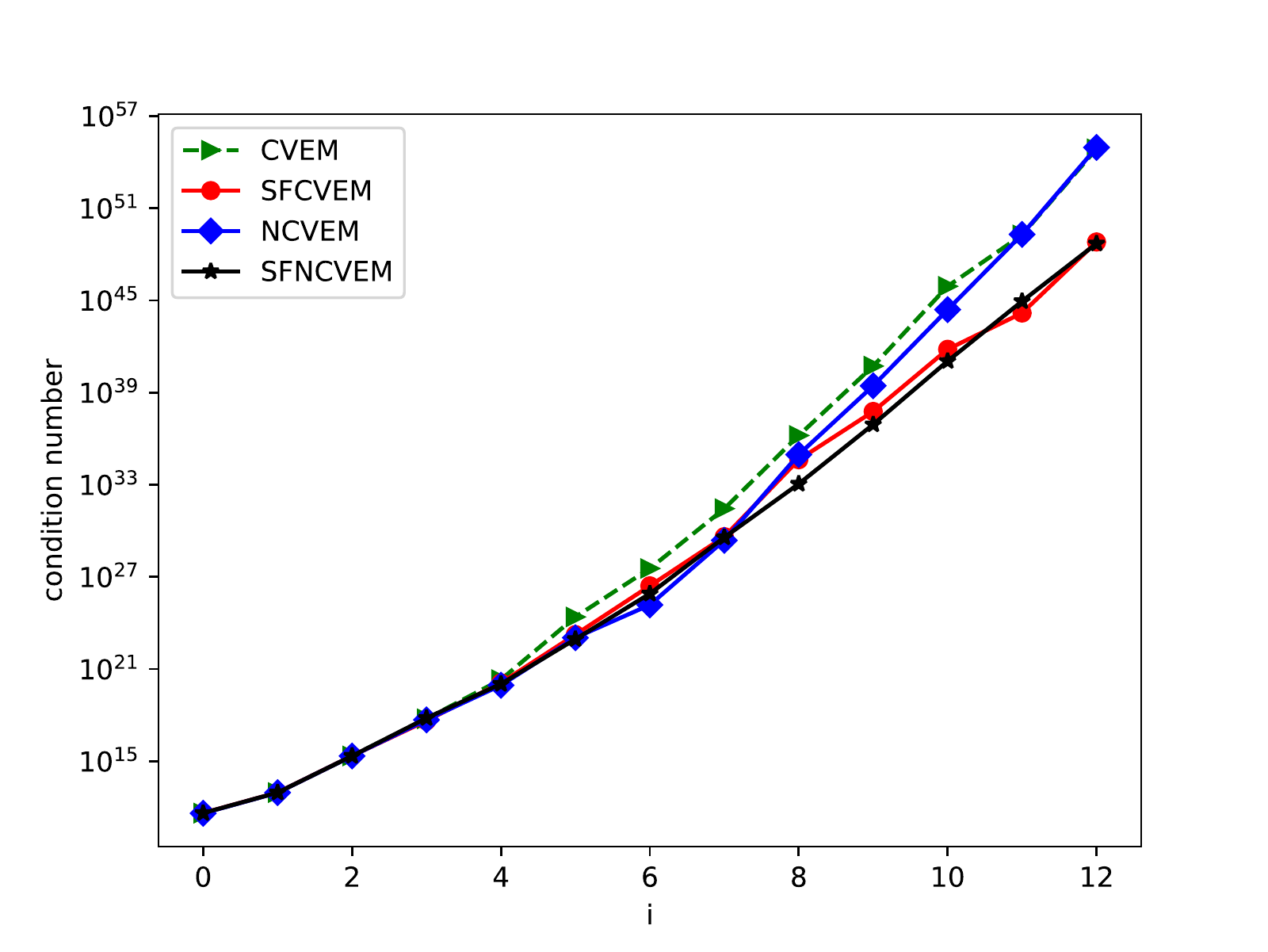}
\end{minipage}}%% 
\quad \quad
\subfigure[$k = 10$]
{\begin{minipage}[t]{0.45\linewidth}
\centering
\includegraphics*[width=2in]{./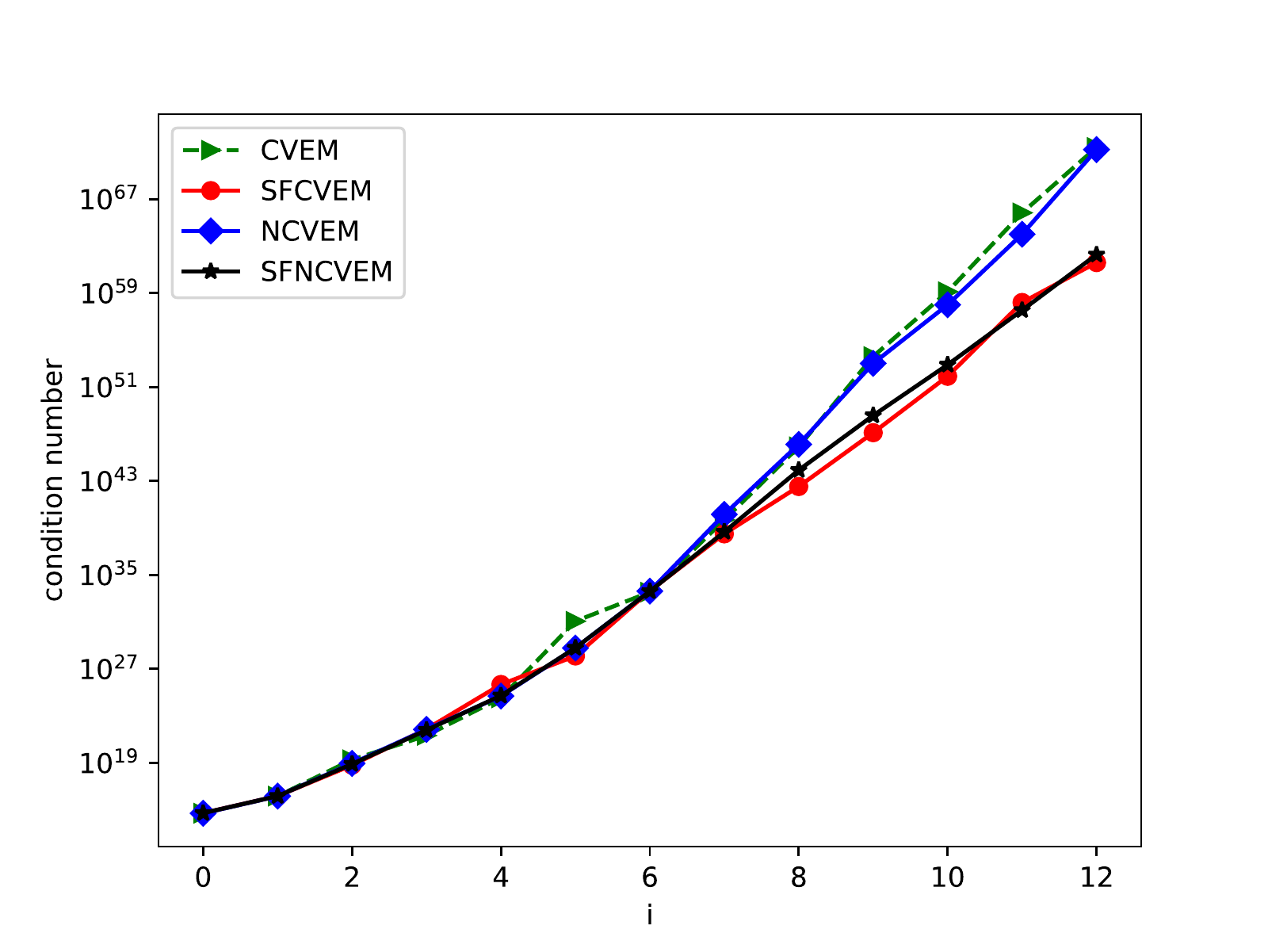}
\end{minipage}}
\caption{The condition number of stiffness matrix of four VEMs on
$\{H_i\}_{i=0}^{12}$.}
\label{fig:collapsehexagon_conditionnumber}
\end{figure}

% \begin{figure}[htbp]
% \centering
% \subfigure{\includegraphics[width=2.2in]{./figures/collapsing_condition_number_8.pdf}}
% \subfigure{\includegraphics[width=2.2in]{./figures/collapsing_condition_number_10.pdf}}
% \caption{The condition number of stiffness matrix of four VEMs on
% $\{H_i\}_{i=0}^{12}$, Left : $k = 8$, Right : $k = 10$.}
%   \label{fig:collapsehexagon_conditionnumber0} %% label for entire figure
% \end{figure}

Secondly, we do the patch test for the Laplace equation, i.e. problem \eqref{eq:ellipitc2ndproblem} with $\alpha = 0$ and $f=0$, but the Dirichlet boundary condition is nonhomogeneous. Take the exact solution $u=1+x+y$. %We adopt meshes in Fig. \ref{fig:polymesh}.
% $$
% \left\{\begin{aligned}
% -\Delta u & = 0 \quad x \in \Omega\\
% u & = g \quad x \in \partial \Omega
% \end{aligned}\right.
% $$
% where $\Omega = (0, 1)^2, \ u = g = 1+x+y$. 
Let $h_x$ and $h_y$ be mesh size in the $x$-direction and
$y$-direction respectively.
We examine the behavior of error $\|u - u_h\|_0$ of the four VEMs in the
following three cases:
\begin{enumerate}[(1)]
\item Mesh in Fig.~\ref{fig:mesh}(a): Fix $h_x=h_y=0.2$ but vary $k= 1, 2, \ldots, 10$;
\item Mesh in Fig.~\ref{fig:mesh}(a): Fix $k=3$ but vary $h_x=h_y=2^{-i}$ for $i=1,\ldots, 5$;
\item Mesh in Fig.~\ref{fig:polymesh}: Fix $k=3$ and $h_x=0.2$, but vary $h_y=2^{-i}$ for $i=1,\ldots, 8$.
\end{enumerate}
\begin{figure}[htbp]
\centering
% \subfigure{\includegraphics[width=2in]{./}}
\subfigure{\includegraphics[width=2in]{./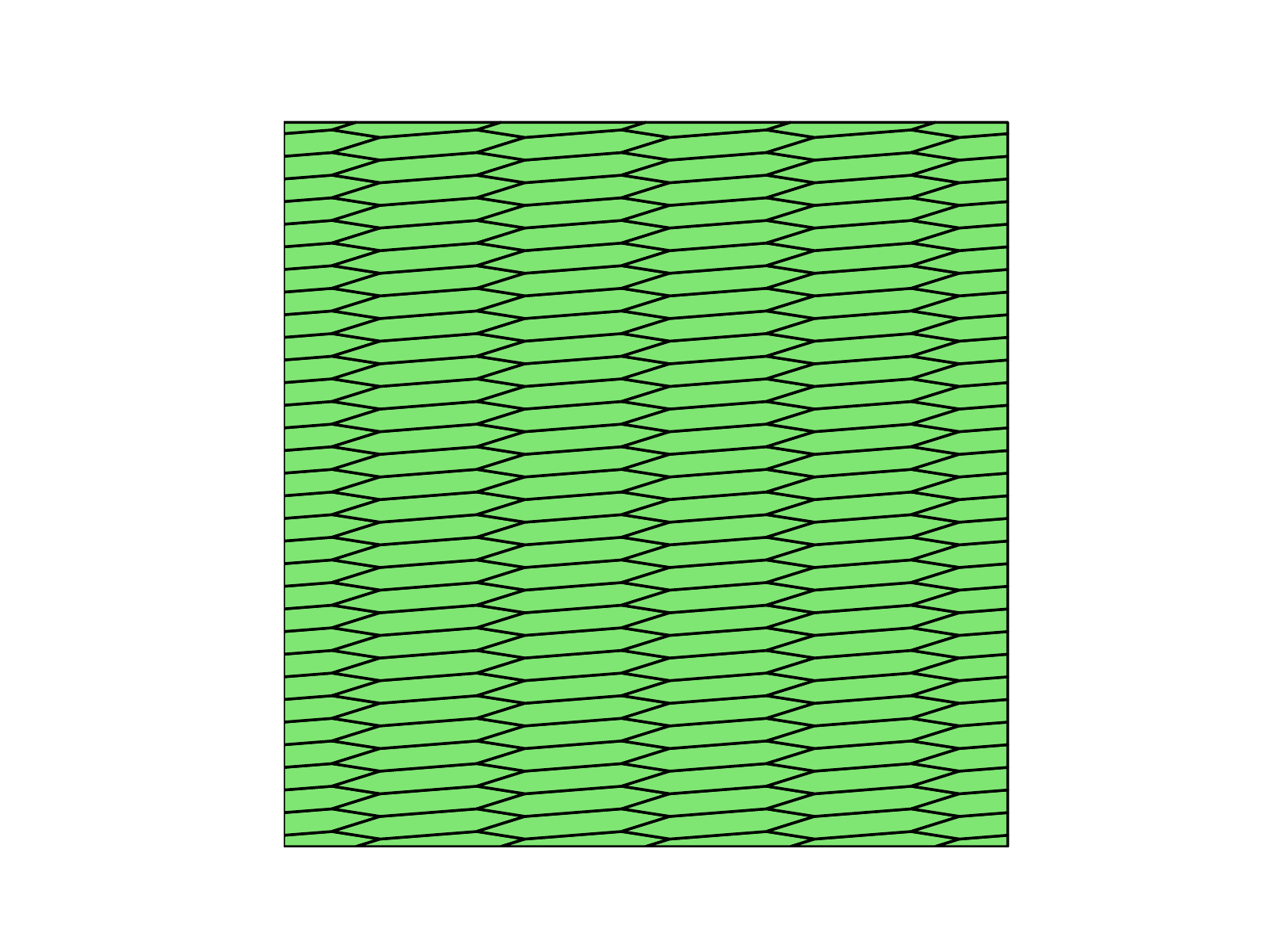}}
\caption{The mesh of domain $(0, 1)\times(0, 1)$ with
$h_x=0.2, h_y = 0.03125$.}
% \caption{The meshes of domain $(0, 1)\times(0, 1)$. Left: $h = 0.1$. Right:
% $h_x=0.2, h_y = 0.03125$.}
  \label{fig:polymesh} %% label for entire figure
\end{figure}
The errors of the four methods shown in Fig.~\ref{fig:patchtest} are similar. 
Since the error in the patch test
grows as the condition number of the stiffness matrix grows, 
the condition numbers of the stiffness matrix obtained by
four methods are comparable.
\begin{figure}[htbp]
\centering
\subfigure{\includegraphics[width=1.8in]{./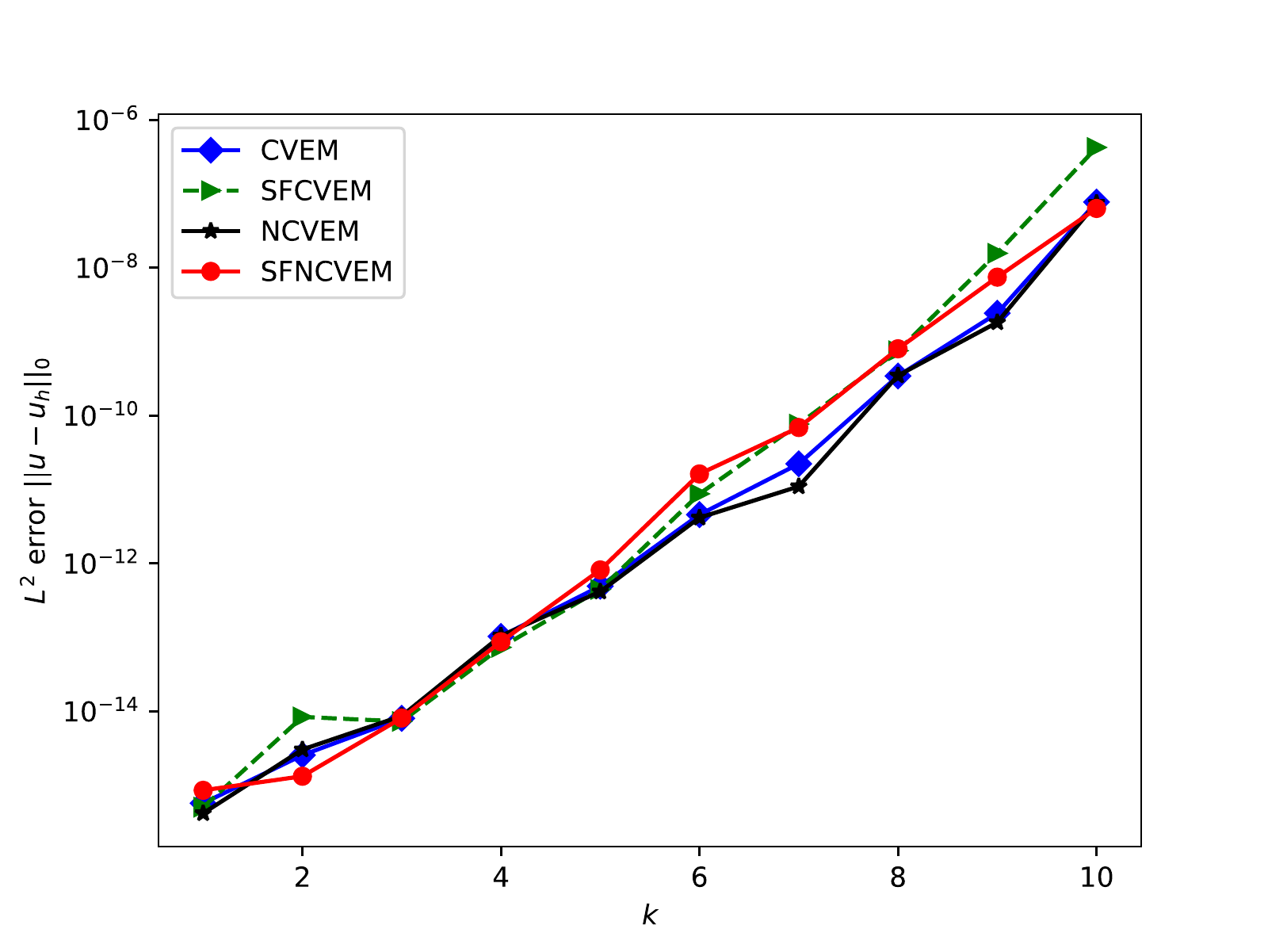}}
\subfigure{\includegraphics[width=1.8in]{./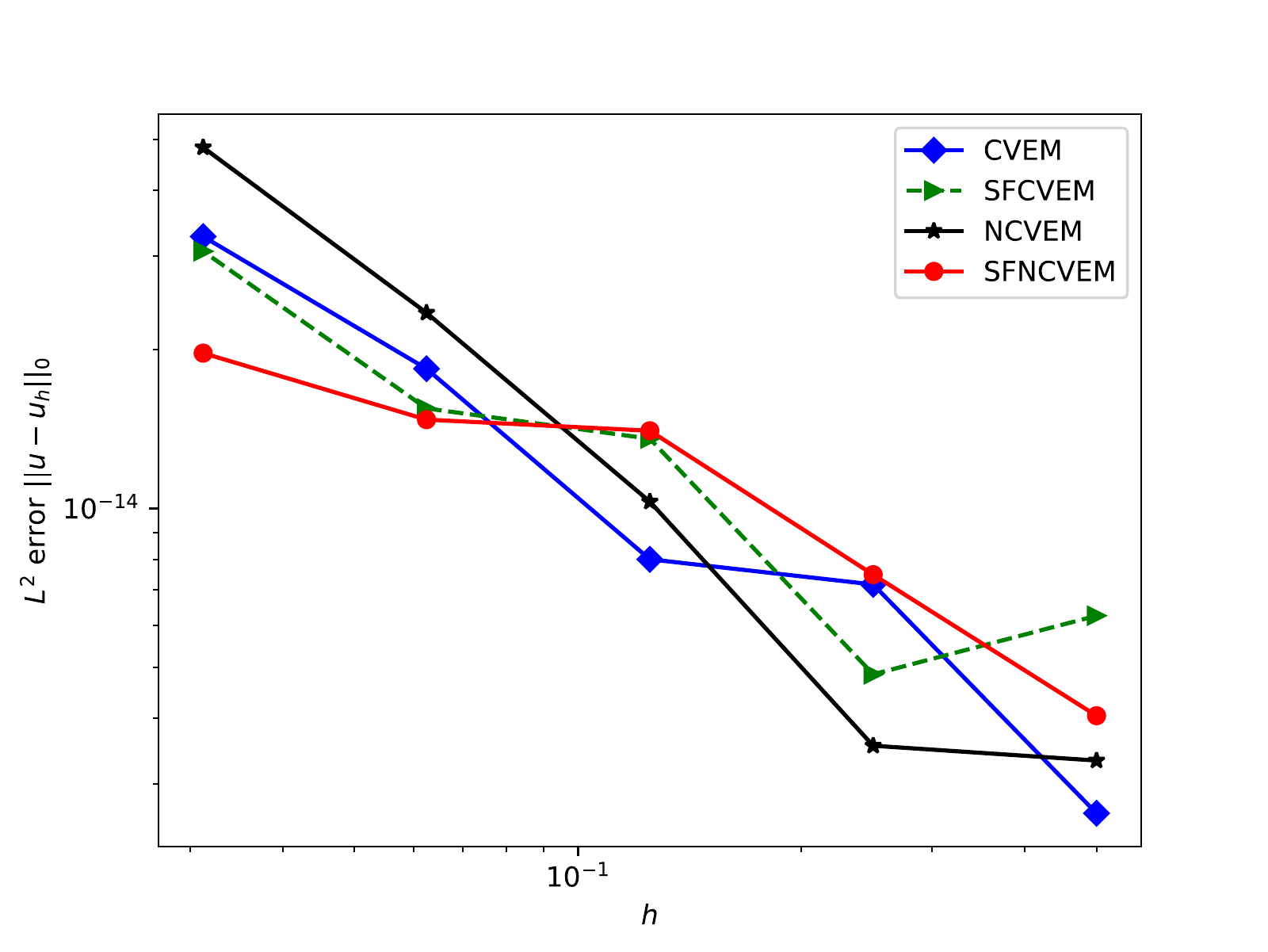}}
\subfigure{\includegraphics[width=1.8in]{./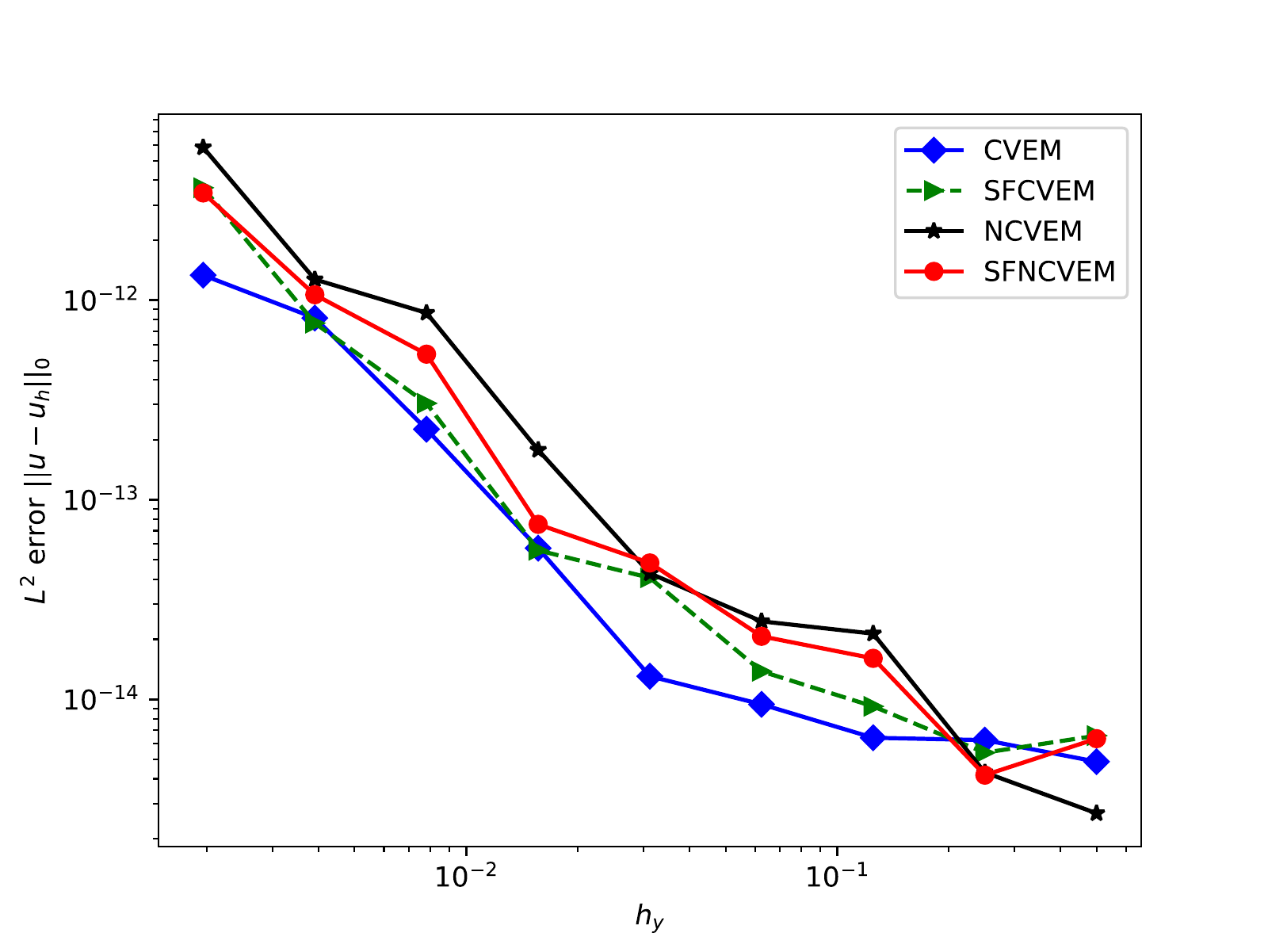}}
\caption{The $L^2$ error of patch test.}
\label{fig:patchtest} %% label for entire figure
\end{figure}

\bibliographystyle{abbrv}
\bibliography{./paper}

\begin{thebibliography}{10}

\bibitem{AhmadAlsaediBrezziMariniEtAl2013}
B.~Ahmad, A.~Alsaedi, F.~Brezzi, L.~D. Marini, and A.~Russo.
\newblock Equivalent projectors for virtual element methods.
\newblock {\em Comput. Math. Appl.}, 66(3):376--391, 2013.

\bibitem{AlTaweelWang2020a}
A.~Al-Taweel and X.~Wang.
\newblock The lowest-order stabilizer free weak {G}alerkin finite element
  method.
\newblock {\em Appl. Numer. Math.}, 157:434--445, 2020.

\bibitem{AlTaweelWang2020}
A.~Al-Taweel and X.~Wang.
\newblock A note on the optimal degree of the weak gradient of the stabilizer
  free weak {G}alerkin finite element method.
\newblock {\em Appl. Numer. Math.}, 150:444--451, 2020.

\bibitem{AlTaweelWangYeZhang2021}
A.~Al-Taweel, X.~Wang, X.~Ye, and S.~Zhang.
\newblock A stabilizer free weak {G}alerkin finite element method with
  supercloseness of order two.
\newblock {\em Numer. Methods Partial Differential Equations},
  37(2):1012--1029, 2021.

\bibitem{AntoniettiBerroneBorioDAuriaEtAl2022}
P.~F. Antonietti, S.~Berrone, A.~Borio, A.~D'Auria, M.~Verani, and S.~Weisser.
\newblock Anisotropic {\it a posteriori} error estimate for the virtual element
  method.
\newblock {\em IMA J. Numer. Anal.}, 42(2):1273--1312, 2022.

\bibitem{ArnoldGuzman2021}
D.~Arnold and J.~Guzm\'{a}n.
\newblock Local {$L^2$}-bounded commuting projections in {FEEC}.
\newblock {\em ESAIM Math. Model. Numer. Anal.}, 55(5):2169--2184, 2021.

\bibitem{Arnold2018}
D.~N. Arnold.
\newblock {\em Finite element exterior calculus}, volume~93 of {\em CBMS-NSF
  Regional Conference Series in Applied Mathematics}.
\newblock Society for Industrial and Applied Mathematics (SIAM), Philadelphia,
  PA, 2018.

\bibitem{ArnoldFalkWinther2006}
D.~N. Arnold, R.~S. Falk, and R.~Winther.
\newblock Finite element exterior calculus, homological techniques, and
  applications.
\newblock {\em Acta Numer.}, 15:1--155, 2006.

\bibitem{AyusodeDiosLipnikovManzini2016}
B.~Ayuso~de Dios, K.~Lipnikov, and G.~Manzini.
\newblock The nonconforming virtual element method.
\newblock {\em ESAIM Math. Model. Numer. Anal.}, 50(3):879--904, 2016.

\bibitem{BeiraodaVeigaDassiRusso2017}
L.~Beir\~ao~da Veiga, F.~Dassi, and A.~Russo.
\newblock High-order virtual element method on polyhedral meshes.
\newblock {\em Comput. Math. Appl.}, 74(5):1110--1122, 2017.

\bibitem{BeiraodaVeigaLovadinaRusso2017}
L.~Beir\~ao~da Veiga, C.~Lovadina, and A.~Russo.
\newblock Stability analysis for the virtual element method.
\newblock {\em Math. Models Methods Appl. Sci.}, 27(13):2557--2594, 2017.

\bibitem{BeiraoBrezziCangianiManziniEtAl2013}
L.~Beir{\~a}o~da Veiga, F.~Brezzi, A.~Cangiani, G.~Manzini, L.~D. Marini, and
  A.~Russo.
\newblock Basic principles of virtual element methods.
\newblock {\em Math. Models Methods Appl. Sci.}, 23(1):199--214, 2013.

\bibitem{BeiraoBrezziMariniRusso2014}
L.~Beir{\~a}o~da Veiga, F.~Brezzi, L.~D. Marini, and A.~Russo.
\newblock The hitchhiker's guide to the virtual element method.
\newblock {\em Math. Models Methods Appl. Sci.}, 24(8):1541--1573, 2014.

\bibitem{BeiraodaVeigaBrezziMariniRusso2016}
L.~Beir{\~a}o~da Veiga, F.~Brezzi, L.~D. Marini, and A.~Russo.
\newblock Virtual element method for general second-order elliptic problems on
  polygonal meshes.
\newblock {\em Math. Models Methods Appl. Sci.}, 26(4):729--750, 2016.

\bibitem{BerroneBorioMarcon2021}
S.~Berrone, A.~Borio, and F.~Marcon.
\newblock Lowest order stabilization free virtual element method for the
  {P}oisson equation.
\newblock {\em arXiv preprint arXiv:2103.16896}, 2021.

\bibitem{BerroneBorioMarcon2022}
S.~Berrone, A.~Borio, and F.~Marcon.
\newblock Comparison of standard and stabilization free virtual elements on
  anisotropic elliptic problems.
\newblock {\em Appl. Math. Lett.}, 129:Paper No. 107971, 5, 2022.

\bibitem{BoffiGardiniGastaldi2020}
D.~Boffi, F.~Gardini, and L.~Gastaldi.
\newblock Approximation of {PDE} eigenvalue problems involving parameter
  dependent matrices.
\newblock {\em Calcolo}, 57(4):Paper No. 41, 21, 2020.

\bibitem{Brenner2003}
S.~C. Brenner.
\newblock Poincar\'e-{F}riedrichs inequalities for piecewise {$H^1$} functions.
\newblock {\em SIAM J. Numer. Anal.}, 41(1):306--324, 2003.

\bibitem{BrennerSung2018}
S.~C. Brenner and L.-Y. Sung.
\newblock Virtual element methods on meshes with small edges or faces.
\newblock {\em Math. Models Methods Appl. Sci.}, 28(7):1291--1336, 2018.

\bibitem{BrezziDouglasDuranFortin1987}
F.~Brezzi, J.~Douglas, Jr., R.~Dur{\'a}n, and M.~Fortin.
\newblock Mixed finite elements for second order elliptic problems in three
  variables.
\newblock {\em Numer. Math.}, 51(2):237--250, 1987.

\bibitem{BrezziDouglasMarini1986}
F.~Brezzi, J.~Douglas, Jr., and L.~D. Marini.
\newblock Recent results on mixed finite element methods for second order
  elliptic problems.
\newblock In {\em Vistas in applied mathematics}, Transl. Ser. Math. Engrg.,
  pages 25--43. Optimization Software, New York, 1986.

\bibitem{CangianiManziniSutton2017}
A.~Cangiani, G.~Manzini, and O.~J. Sutton.
\newblock Conforming and nonconforming virtual element methods for elliptic
  problems.
\newblock {\em IMA J. Numer. Anal.}, 37(3):1317--1354, 2017.

\bibitem{CaoChenGuo2023}
S.~Cao, L.~Chen, and R.~Guo.
\newblock Immersed virtual element methods for electromagnetic interface
  problems in three dimensions.
\newblock {\em Math. Models Methods Appl. Sci.}, 2023.

\bibitem{ChenHuangWei2022}
C.~Chen, X.~Huang, and H.~Wei.
\newblock {$H^m$}-conforming virtual elements in arbitrary dimension.
\newblock {\em SIAM J. Numer. Anal.}, 60(6):3099--3123, 2022.

\bibitem{ChenHuang2018}
L.~Chen and J.~Huang.
\newblock Some error analysis on virtual element methods.
\newblock {\em Calcolo}, 55(1):55:5, 2018.

\bibitem{ChenHuang2020ncvem}
L.~Chen and X.~Huang.
\newblock Nonconforming virtual element method for {$2m$}th order partial
  differential equations in {$\Bbb{R}^n$}.
\newblock {\em Math. Comp.}, 89(324):1711--1744, 2020.

\bibitem{ChenHuang2021divX}
L.~Chen and X.~Huang.
\newblock Geometric decompositions of div-conforming finite element tensors.
\newblock {\em arXiv preprint arXiv:2112.14351}, 2021.

\bibitem{ChenHuang2021divdiv}
L.~Chen and X.~Huang.
\newblock Finite elements for div- and divdiv-conforming symmetric tensors in
  arbitrary dimension.
\newblock {\em SIAM J. Numer. Anal.}, 60(4):1932--1961, 2022.

\bibitem{Chen;Huang:2020Finite}
L.~Chen and X.~Huang.
\newblock Finite elements for {${\rm div\,div}$} conforming symmetric tensors
  in three dimensions.
\newblock {\em Math. Comp.}, 91(335):1107--1142, 2022.

\bibitem{Ciarlet1978}
P.~G. Ciarlet.
\newblock {\em The finite element method for elliptic problems}.
\newblock North-Holland Publishing Co., Amsterdam, 1978.

\bibitem{CicuttinErnLemaire2019}
M.~Cicuttin, A.~Ern, and S.~Lemaire.
\newblock A hybrid high-order method for highly oscillatory elliptic problems.
\newblock {\em Comput. Methods Appl. Math.}, 19(4):723--748, 2019.

\bibitem{CostabelMcIntosh2010}
M.~Costabel and A.~McIntosh.
\newblock On {B}ogovski\u\i\ and regularized {P}oincar\'e integral operators
  for de {R}ham complexes on {L}ipschitz domains.
\newblock {\em Math. Z.}, 265(2):297--320, 2010.

\bibitem{DassiMascotto2018}
F.~Dassi and L.~Mascotto.
\newblock Exploring high-order three dimensional virtual elements: bases and
  stabilizations.
\newblock {\em Comput. Math. Appl.}, 75(9):3379--3401, 2018.

\bibitem{FalkWinther2014}
R.~S. Falk and R.~Winther.
\newblock Local bounded cochain projections.
\newblock {\em Math. Comp.}, 83(290):2631--2656, 2014.

\bibitem{Grisvard1985}
P.~Grisvard.
\newblock {\em Elliptic problems in nonsmooth domains}, volume~24 of {\em
  Monographs and Studies in Mathematics}.
\newblock Pitman (Advanced Publishing Program), Boston, MA, 1985.

\bibitem{Huang2020}
X.~Huang.
\newblock Nonconforming virtual element method for 2{$m$}th order partial
  differential equations in {$\Bbb R^n$} with {$m>n$}.
\newblock {\em Calcolo}, 57(4):Paper No. 42, 38, 2020.

\bibitem{HudobivnikAldakheelWriggers2019}
B.~Hudobivnik, F.~Aldakheel, and P.~Wriggers.
\newblock A low order 3{D} virtual element formulation for finite
  elasto-plastic deformations.
\newblock {\em Comput. Mech.}, 63(2):253--269, 2019.

\bibitem{LaxMilgram1954}
P.~D. Lax and A.~N. Milgram.
\newblock Parabolic equations.
\newblock In {\em Contributions to the theory of partial differential
  equations}, Annals of Mathematics Studies, no. 33, pages 167--190. Princeton
  University Press, Princeton, N.J., 1954.

\bibitem{Mascotto2018}
L.~Mascotto.
\newblock Ill-conditioning in the virtual element method: stabilizations and
  bases.
\newblock {\em Numer. Methods Partial Differential Equations},
  34(4):1258--1281, 2018.

\bibitem{Nedelec1980}
J.-C. N{\'e}d{\'e}lec.
\newblock Mixed finite elements in {${\bf R}^{3}$}.
\newblock {\em Numer. Math.}, 35(3):315--341, 1980.

\bibitem{Nedelec:1986family}
J.-C. N{\'e}d{\'e}lec.
\newblock A new family of mixed finite elements in {${\bf R}^3$}.
\newblock {\em Numer. Math.}, 50(1):57--81, 1986.

\bibitem{Necas1967}
J.~Ne\v{c}as.
\newblock {\em Les m\'{e}thodes directes en th\'{e}orie des \'{e}quations
  elliptiques}.
\newblock Masson, Paris, 1967.

\bibitem{RaviartThomas1977}
P.-A. Raviart and J.~M. Thomas.
\newblock A mixed finite element method for 2nd order elliptic problems.
\newblock In {\em Mathematical aspects of finite element methods ({P}roc.
  {C}onf., {C}onsiglio {N}az. delle {R}icerche ({C}.{N}.{R}.), {R}ome, 1975)},
  pages 292--315. Lecture Notes in Math., Vol. 606. Springer, Berlin, 1977.

\bibitem{Verfuerth2013}
R.~Verf\"{u}rth.
\newblock {\em A posteriori error estimation techniques for finite element
  methods}.
\newblock Numerical Mathematics and Scientific Computation. Oxford University
  Press, Oxford, 2013.

\bibitem{fealpy}
H.~Wei, Y.~Huang, and C.~Chen.
\newblock Fealpy: Finite element analysis library in python.
\newblock https://github.com/weihuayi/fealpy, Xiangtan University, 2017-2023.

\bibitem{XuZhang2023}
X.~Xu and S.~Zhang.
\newblock A family of stabilizer-free virtual elements on triangular meshes.
\newblock {\em arXiv preprint arXiv:2309.09660}, 2023.

\bibitem{YeZhang2020}
X.~Ye and S.~Zhang.
\newblock A stabilizer-free weak {G}alerkin finite element method on polytopal
  meshes.
\newblock {\em J. Comput. Appl. Math.}, 371:112699, 9, 2020.

\bibitem{YeZhang2021}
X.~Ye and S.~Zhang.
\newblock A stabilizer free weak {G}alerkin finite element method on polytopal
  mesh: {P}art {II}.
\newblock {\em J. Comput. Appl. Math.}, 394:Paper No. 113525, 11, 2021.

\bibitem{YeZhang2021a}
X.~Ye and S.~Zhang.
\newblock A stabilizer free weak {G}alerkin finite element method on polytopal
  mesh: {P}art {III}.
\newblock {\em J. Comput. Appl. Math.}, 394:Paper No. 113538, 9, 2021.

\end{thebibliography}
\end{document}